\newtheorem{theorem}{Theorem}[section]
\newtheorem{proposition}[theorem]{Proposition}
\newtheorem{lemma}[theorem]{Lemma}
\theoremstyle{definition}
\newtheorem{definition}[theorem]{Definition}
\newtheorem{example}[theorem]{Example}
\theoremstyle{remark} \newtheorem{remark}[theorem]{Remark}
\let\c@table\c@figure
\numberwithin{equation}{section}
\numberwithin{figure}{section}
\numberwithin{table}{section}
\numberwithin{algorithm}{section}
\newcommand{\field}[1]{{\mathbb{#1}}}
\newcommand{\C}{\field{C}}
\newcommand{\N}{\field{N}}
\newcommand{\R}{\field{R}}
\newcommand{\I}{\mathbb{I}}
\newcommand{\M}{\mathbb{M}}
\newcommand{\Ccal}{\mathcal{C}}
\newcommand{\Fcal}{\mathcal{F}}
\newcommand{\Gcal}{\mathcal{G}}
\newcommand{\Hcal}{\mathcal{H}}
\newcommand{\Pcal}{\mathcal{P}}
\newcommand{\Scal}{\mathcal{S}}
\newcommand{\Tcal}{\mathcal{T}}
\newcommand{\Ucal}{\mathcal{U}}
\newcommand{\bs}{\boldsymbol} 
\newcommand{\bfb}{{\bs b}}
\newcommand{\bfd}{{\bs d}}
\newcommand{\bfe}{{\bs e}}
\newcommand{\bfh}{{\bs h}}
\newcommand{\bfn}{{\bs n}}
\newcommand{\bfp}{{\bs p}}
\newcommand{\bft}{{\bs t}}
\newcommand{\bfx}{{\bs x}}
\newcommand{\bfy}{{\bs y}}
\newcommand{\bfA}{{\bs A}}
\newcommand{\bfB}{{\bs B}}
\newcommand{\bfE}{{\bs E}}
\newcommand{\bfT}{{\bs T}}
\newcommand{\bfU}{{\bs U}}
\newcommand{\bfV}{{\bs V}}
\newcommand{\bfX}{{\bs X}}
\newcommand{\bfeta}{{\bs\eta}}
\newcommand{\bfnu}{{\bs\nu}}
\newcommand{\bftheta}{{\bs\theta}}
\newcommand{\bfxi}{\bs\xi}
\newcommand{\loc}{{\mathrm{loc}}}
\newcommand{\ol}[1]{\overline{#1}}
\newcommand{\tm}{\subseteq} 
\newcommand{\di}{\partial}
\newcommand{\trans}{{\top}}
\newcommand{\ds}{\, \dif s} 
\newcommand{\dt}{\, \dif t}
\newcommand{\dx}{\, \dif \bfx}
\newcommand{\xhat}{\widehat{\bfx}}
\newcommand{\rmi}{\mathrm{i}} 
\newcommand{\rme}{\mathrm{e}}
\newcommand{\eps}{\varepsilon}
\newcommand{\bfs}{{\mathbf{s}}}
\newcommand{\Sd}{{S^2}}
\newcommand{\Sone}{{S^1}}
\newcommand{\Stwo}{{S^2}}
\newcommand{\Rd}{{\R^3}}
\newcommand{\Rtwo}{{\R^2}}
\newcommand{\Cd}{{\C^3}}
\newcommand{\Rdd}{{\R^{3\times3}}}
\newcommand{\Cdd}{{\C^{3\times3}}}
\newcommand{\Umn}{\bfU^m_n}
\newcommand{\Vmn}{\bfV^m_n}
\newcommand{\Amn}{\bfA^m_n}
\newcommand{\Bmn}{\bfB^m_n}
\DeclareMathOperator{\curl}{{\mathbf{curl}}}
\renewcommand{\div}{\mathrm{div}}
\newcommand{\BR}{{B_R(0)}}
\newcommand{\Ei}{\bfE^i}
\newcommand{\Esrho}{\bfE^s_\rho}
\newcommand{\Etrho}{\bfE_\rho}
\newcommand{\Einftyrho}{\bfE^\infty_\rho}
\newcommand{\Drho}{D_\rho}
\newcommand{\Drhoprime}{D_\rho'}
\newcommand{\FcalDrho}{\Fcal_{D_\rho}}
\newcommand{\TcalDrho}{\Tcal_{D_\rho}}
\newcommand{\LttSd}{{L^2_t(S^2,\Cd)}}
\newcommand{\HS}{\mathrm{HS}}
\newcommand{\XK}{\bfX_\bfp}
\newcommand{\tp}{\bft_\bfp}
\newcommand{\np}{\bfn_\bfp}
\newcommand{\bp}{\bfb_\bfp}
\newcommand{\npada}{\bfn_{\bfp,\theta}}
\newcommand{\bpada}{\bfb_{\bfp,\theta}}
\newcommand{\kappamax}{\kappa_{\max}}
\newcommand{\rGamma}{r}
\newcommand{\tri}{\bigtriangleup}
\newcommand{\Stri}{\Scal_\tri}
\newcommand{\xvec}{\vv{\bfx}}
\newcommand{\dvec}{\vv{\bfd}}
\newcommand{\svec}{\vv{\bfs}}
\newcommand{\yvec}{\vv{\bfy}}
\newcommand{\chitwo}{\chi_{2}}
\newcommand{\chiHS}{\chi_\HS}
\newcommand{\Rtheta}{R_\theta}
\newcommand{\Meps}{\M^\eps}
\newcommand{\meps}{\mathbbm{m}^\eps}
\newcommand{\epsr}{\eps_r}
\newcommand{\Vplus}{V^+}
\newcommand{\Vminus}{V^-}
\newcommand{\bfTrho}{\bfT_\rho}
\newcommand{\bfTrhoN}{\bfT_{\rho,N}}
\newcommand{\Uad}{\Ucal_{\mathrm{ad}}}
\newcommand{\Jtwo}{J_2}
\newcommand{\JHS}{J_\HS}
\newcommand{\ph}{\,\cdot\,}
\newcommand{\SOd}{\mathrm{SO}(3)}
\newcommand{\Vptheta}{V_{\bfp,\theta}}
\newcommand{\fres}{f_{\mathrm{res}}}
\newcommand{\fopt}{f_{\mathrm{opt}}}
\newcommand{\kopt}{k_{\mathrm{opt}}}
\newcommand{\lambdaopt}{\lambda_{\mathrm{opt}}}
\DeclareMathAlphabet{\mathbi}{\encodingdefault}{\rmdefault}{\bfdefault}{\itdefault}
\DeclareRobustCommand{\vec}[1]{\ifmmode\mathbi{#1}\else\textbf{\textit{#1}}\fi}
\DeclareMathOperator{\dif}{d\!}
\DeclareMathOperator{\real}{Re}
\DeclareMathOperator{\imag}{Im}
\DeclareMathOperator*{\argmin}{argmin}
\journal{SFB 1173}
\begin{document}

\title{Maximizing the electromagnetic chirality of thin metallic
  nanowires\\ at optical frequencies}

\author[add1]{Ivan Fernandez-Corbaton}
\ead{ivan.fernandez-corbaton@kit.edu}
\author[add2]{Roland Griesmaier}
\ead{roland.griesmaier@kit.edu}
\author[add2]{Marvin Kn\"oller}
\ead{marvin.knoeller@kit.edu}
\author[add1,add3]{Carsten Rockstuhl}
\ead{carsten.rockstuhl@kit.edu}

\address[add1]{Institute of Nanotechnology, Karlsruhe Institute of
  Technology, 76021 Karlsruhe, Germany} 
\address[add2]{Institute for Applied and Numerical Mathematics,
  Karlsruhe Institute of Technology, 76131 Karlsruhe, Germany} 
\address[add3]{Institute of Theoretical Solid
  State Physics, Karlsruhe Institute of Technology, 76131 Karlsruhe,
  Germany}  

\date{\today}

\begin{abstract}
  Electromagnetic waves impinging on three-dimensional helical
  metallic metamaterials have been shown to exhibit chiral effects of 
  large magnitude both theoretically and in experimental realizations.  
  Chirality here describes different responses of scatterers,
  materials, or metamaterials to left and right circularly polarized
  electromagnetic waves.
  These differences can be quantified in terms of electromagnetic
  chirality measures. 
  In this work we consider the optimal design of thin metallic
  free-form nanowires that possess measures of electromagnetic
  chirality as large as fundamentally possible.
  We focus on optical frequencies and use a gradient based optimization
  scheme to determine the optimal shape of highly chiral thin silver
  and gold nanowires. 
  The electromagnetic chirality measures of our optimized nanowires
  exceed that of traditional metallic helices. 
  Therefore, these should be well suited as building
  blocks of novel metamaterials with an increased chiral response.   
  We discuss a series of numerical examples, and we evaluate the
  performance of different optimized designs. 
\end{abstract}

\maketitle

% Text of article.
{\small\noindent
  Mathematics subject classifications (MSC2010): 78M50, (49Q10, 78A45)
  \\\noindent 
  Keywords: electromagnetic scattering, chirality, shape optimization,
  maximally chiral nanowires
  \\\noindent
  Short title: Maximizing the em-chirality of metallic nanowires
  % \\[1em]\noindent Last modified: \today
}

%%%%%%%%%%%%%%%%%%%%%%%%%%%%%%%%%%%%%%%%%%%%%%%%%%%%%%%%%%%%%%%%%%%%%% 
\section{Introduction}
%%%%%%%%%%%%%%%%%%%%%%%%%%%%%%%%%%%%%%%%%%%%%%%%%%%%%%%%%%%%%%%%%%%%%% 
The concept of electromagnetic chirality (em-chirality) has recently
been introduced to quantify differential interactions of scattering
objects, materials, or metamaterials with electromagnetic waves of
positive and negative helicity. 
Broadly speaking, if all scattered fields that are caused by
illuminating an object with either left or right circularly polarized
electromagnetic waves can also be reproduced using circularly
polarized electromagnetic waves of the opposite helicity, then we call
the object em-achiral.  
If this is not the case, then the object is em-chiral.
This notion of em-chirality is consistent with the traditional
geometric concept of chirality, but in contrast to the geometric
chirality of an object, its em-chirality can be quantified directly in
terms of the object's interaction with electromagnetic waves using
em-chirality measures
\cite{AreHagHetKir18,FerFruRoc16,VavFer22}. 
These em-chirality measures are bounded from below by zero and from
above by the total interaction cross-section of the scattering object,
which makes them well-suited objective functionals for a shape
optimization. 
The lower bound zero is attained for em-achiral objects, and a
maximally em-chiral scattering object, material, or metamaterial would
scatter waves of one helicity while at the same time not scattering
waves of the opposite helicity at all.

Maximally em-chiral scattering objects made of isotropic materials are
being considered as possible building blocks of novel chiral
metamaterials that exhibit effective chiral material parameters many
orders of magnitude larger than what is found in natural substances
\cite{FerRocWeg19,HenSchDuaGie17,KadMilHecWeg19}. 
Metamaterials with large electromagnetic chirality have potential
applications in angle-insensitive circular polarizers, which are
materials that transmit one circular polarization of light and that
reflect and/or absorb the opposite handedness nearly
completely~\cite{GanEtAl12,GanEtAl09}.
The goal of this work is to design such building blocks for
novel metamaterials, whose chiral response at optical
frequencies is as large as fundamentally possible.
Previous numerical studies in
\cite{FerFruRoc16,GanWegBurLin10,GarEtAl21} have concentrated on the
optimal design of silver helices with fixed circular cross-sections at
frequencies ranging from the far-infrared to the optical band. 
This amounts to optimizing four parameters describing the geometry of
the helix: the radius of the helix spine, the thickness of the helix
wire, the pitch of the helix, and the number of turns. 
While the obtained optimized silver helices achieve high chirality
measures for wavelengths of $3~\mu$m or more, their performance
decreases significantly towards the optical frequency band
\cite{GarEtAl21}. 
Three-dimensional helical metallic metamaterials have also been
studied experimentally \cite{GanEtAl12,GanEtAl09,GarEtAl21,KasWeg15b}. 
In \cite{AmmHamNed99} the effective Drude-Born-Fedorov constitutive
relations governing the propagation of electromagnetic waves in a
chiral metamaterial, which is obtained by embedding a large number of
regularly spaced, randomly oriented metallic helices in a homogeneous
medium, have been derived from the linear constitutive relations for
homogeneous isotropic media. 

In this work we go beyond helical shapes for the individual
scatterers.  
Instead of optimizing the few shape parameters of a helix with
circular cross-section, we consider a free-form shape optimization for
thin metallic nanowires with fixed but arbitrary cross-sections. 
In addition to the shape of the spine curve of the nanowire, we also
optimize a possible twisting of its cross-section along the spine
curve. 
This extends an earlier study in \cite{AreGriKno21}, where a free-form
shape optimization for thin dielectric nanowires with circular
cross-sections has been discussed. 
It has been observed in \cite{AreGriKno21} that the optimized thin
dielectric nanowires do not possess very high values of em-chirality
at optical frequencies. 
The distinguishing feature of the noble metals considered for
the nanowires in this work, in particular of silver and gold, is the 
negative real part of their electric permittivity at optical
frequencies combined with a relatively small positive imaginary part.
This permits the excitation of plasmonic resonances, which are not
observed for dielectric nanowires, and that turn out to be relevant
in the design of highly chiral nanowires (see also
\cite{HenSchDuaGie17,HofEtAl19,ValEtAl13}). 
In particular silver has a higher plasma frequency and lower damping 
compared to other noble metals across the optical band, and it is thus
well suited for our purpose. 

The chirality measures from \cite{AreHagHetKir18,FerFruRoc16}, which
have to be maximized in the shape optimization, are defined in terms
of the singular values of the electromagnetic far field operator
associated to the thin nanowire. 
This operator maps superpositions of plane wave incident fields to the
far field patterns of the electromagnetic waves that are scattered at
the nanowire. 
Accordingly, each evaluation of the objective functional and of its
shape derivative in the shape optimization requires the
evaluation of the far field operator and of its shape derivative
corresponding to the current iterate in the shape optimization. 
Applying a traditional shape optimization scheme (see,
e.g.,~\cite{EppHar05,HadJiaRia17,HagAreBetHet19,HagHet20,HinLauYou15,LebEtAl19,SemEtAl15})
would require solving a large number of scattering problems using
either integral equations or finite elements in each iteration step of 
the algorithm. 
This would be computationally intensive, and hardly feasible for the
problem under consideration. 
Instead, we utilize our assumption that the thickness of the nanowire
is small relative to the wave length of the electromagnetic field, 
and we apply an asymptotic representation formula to approximate the  
far field operators associated to thin metallic nanowires
without solving any differential equation. 
This asymptotic formula has been developed for scattered fields due to
thin dielectric nanowires in 
\cite{AlbCap18,BerCapdeGFra09,CapGriKno21,Gri11}.
In previous studies similar asymptotic representation formulas have
been successfully applied in algorithms for shape reconstruction in
electrical impedance tomography and inverse
scattering~\cite{BerCapdeGFra09,CapGriKno21,Gri10,GriHyv11}. 
We provide a rigorous theoretical justification of its
generalization to thin metallic nanowires that are characterized by
complex-valued electric permittivities with negative real and
positive imaginary parts as considered in this work. 
Therewith, we  develop a quasi-Newton algorithm that does not require
to solve a single Maxwell system during the optimization procedure. 
This shape optimization scheme is an extension of an algorithm
from~\cite{AreGriKno21}.  
The novel features are that we consider metallic nanowires, that we
allow for arbitrary cross-sections, and that we optimize not only the
shape of the spine curve of the nanowire but also the twisting of the
cross-section of the nanowire along the spine curve.

In our numerical examples we focus on silver and gold nanowires with 
elliptical cross sections, and we work at discrete frequencies in the
optical band. 
We obtain the largest chirality measures when both, the shape of the
spine curve and the twist rate of the cross-section of the nanowire,
are suitably optimized simultaneously, and when the frequency, where
the optimization is being carried out, is chosen to be somewhat below
the plasmonic resonance frequency of the nanowire. 
We discuss several numerical examples and evaluate the performance
of different optimized designs.
For practical applications of these results it is important to 
note that the asymptotic representation formula that we use in the
shape optimization yields an accurate approximation, when the
thickness of the nanowire is in the range of a few percent of the
wave length of the electromagnetic field.  At optical frequencies
the fabrication of the corresponding nanowires might be challenging.

The article is organized as follows. 
In the next section we introduce our mathematical setting and the
geometric description of thin metallic nanowires that we use
throughout this work. 
We also establish the asymptotic representation formula
for far field patterns of scattered electromagnetic waves due to
thin metallic nanowires.  
In Section~\ref{sec:Chirality} we discuss electromagnetic chirality
and give a short synopsis of the concepts and results that have
recently been developed in \cite{AreHagHetKir18,FerFruRoc16}. 
In Section~\ref{sec:ShapeOptimization} we develop the shape
optimization scheme, and we provide numerical results in
Section~\ref{sec:NumericalResults}.  
In the appendix we establish some estimates that are required in the
explicit characterization of the electric polarization tensor
associated to thin metallic nanowires in
Section~\ref{sec:ScatteringProblem}.

%%%%%%%%%%%%%%%%%%%%%%%%%%%%%%%%%%%%%%%%%%%%%%%%%%%%%%%%%%%%%%%%%%%%%% 
\section{Scattering at thin metallic nanowires}
\label{sec:ScatteringProblem}
%%%%%%%%%%%%%%%%%%%%%%%%%%%%%%%%%%%%%%%%%%%%%%%%%%%%%%%%%%%%%%%%%%%%%% 
We consider the scattering of electromagnetic waves at thin
metallic nanowires in three-dimensional free space.
To begin with, we introduce the geometrical description for these
nanowires. 
Let $\Gamma\tm\Rd$ be a simple (i.e., not self-intersecting) curve
with $C^3$-parametrization~$\bfp:[0,L] \to \Rd$ such that~$\bfp'\neq 0$
on $[0,L]$ for some~$L>0$.
Throughout, we assume for simplicity that $L$ coincides with the
length of $\Gamma$, but the parametrization $\bfp$ does not
necessarily have to be by arc-length. 
We use~$\tp := \bfp'/|\bfp'|$ to denote the
\emph{unit tangent vector} along $\Gamma$, and we consider an
associated moving orthogonal frame $(\tp,\np,\bp)$ with
$\tp\times\np = \bp$ that is \emph{rotation minimizing} (or twist
free) in the sense that 
\begin{equation*}
%   \label{eq:}
  \np'(s) - \phi(s)\, \tp(s) \,=\, 0
  \quad\text{and}\quad
  \np(s)\cdot\tp(s) \,=\, 0 \,, \qquad
  s\in[0,L] \,,
\end{equation*}
for some continuous function $\phi:[0,L]\to\Rd$ with a
\emph{reference vector}  $\np(0)\in\Sd$ satisfying $\np(0)\perp\tp(0)$
(see, e.g.,~\cite{Bis75,WanJutZheLiu08}).
We denote by
\begin{equation}
  \label{eq:DefCurvature}
  \kappa(s)
  \,:=\, \frac{|\bfp'(s)\times \bfp''(s)|}{|\bfp'(s)|^3}
  \,=\, \frac{1}{|\bfp'(s)|} \biggl| \frac{\bfp''(s)}{|\bfp'(s)|}
  - \frac{\bfp'(s)\cdot \bfp''(s)}{|\bfp'(s)|^3} \bfp'(s) \biggr| \,,
  \qquad s\in [0,L] \,,
\end{equation}
the \emph{curvature} of $\Gamma$, and we write
$\kappamax := \|\kappa\|_{C([0,L],\R)}$ for the maximal curvature of
$\Gamma$.
Accordingly, let~$B_{\rGamma}'(0) \subset \R^2$ be a two-dimensional
disk with radius $0<\rGamma<1/\kappamax$ centered at the origin. 
Then, 
\begin{align}
  \label{eq:Xrho}
  \XK: [0,L] \times B_\rGamma'(0) \to \R^3 \,, \qquad
  \XK(s,\eta,\xi) \,:=\, \bfp(s) + \eta\,\np(s) + \xi\,\bp(s)
\end{align}
is a local coordinate system around $\Gamma$.

\begin{figure}[thp]
  \centering
  \includegraphics[height = 5.5cm]{./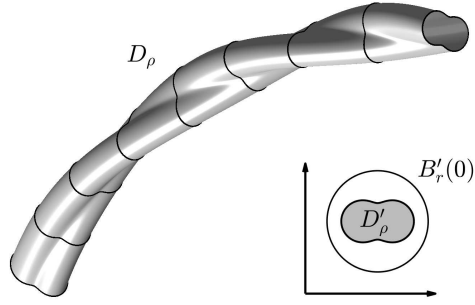}
  \caption{Sketch of the geometry of a thin nanowire $\Drho$.}
  \label{fig:Sketch}
\end{figure}

In the following, $\Gamma$ will be the \emph{spine curve} of the 
nanowire. 
We restrict the discussion to thin nanowires with fixed but possibly
twisting cross-section $\Drhoprime := \rho B'$, where the
\emph{rescaled cross-section} $B'\tm B_1'(0)\tm \R^2$ is a bounded
Lipschitz domain. 
Here, $0<\rho<\rGamma$ is a scaling parameter that will determine the
thickness of the nanowire. 
To describe the twisting of the cross-section of the nanowire along
the spine curve, we let $\theta \in C^2([0,L],\R)$ be a
\emph{twisting function}, and we define a two-dimensional parameter
dependent rotation matrix 
\begin{equation}
  \label{eq:DefRtheta}
  \Rtheta(s)
  \,:=\,
  \begin{bmatrix}
    \cos\theta(s) & -\sin\theta(s) \\
    \sin\theta(s) & \cos\theta(s)
  \end{bmatrix} \,, \qquad s \in [0,L] \,.
\end{equation}
Accordingly, the support of the nanowire shall be given by
\begin{equation}
  \label{eq:DefDrho}
  \Drho
  \,:=\, \Bigl\{\XK(s,\eta,\xi) \;\Big| \;
  s \in [0,L] \,,\;
  \Rtheta^{-1}(s)
  \begin{bmatrix}
    \eta\\\xi
  \end{bmatrix}
  \in \Drhoprime \Bigr\} 
\end{equation}
(see Figure~\ref{fig:Sketch} for a sketch).

\begin{remark}
  In \cite{CapGriKno21} we used the Frenet-Serret frame instead
  of a rotation minimizing frame to describe the geometry of thin
  tubular scattering objects as in \eqref{eq:DefDrho}.
  As a consequence, we had to assume that the spine curve $\Gamma$ 
  has a non-vanishing curvature. 
  Using a rotation minimizing frame, the analysis in~\cite{CapGriKno21}
  extends without further changes to spine curves, which may have
  vanishing curvature at some points. 
  The special case, when $\Gamma$ is a straight line segment, has
  been considered in \cite{BerCapdeGFra09}.~\hfill$\lozenge$
\end{remark}
  
  In contrast to the Frenet-Serret frame, the rotation minimizing frame
  $(\tp,\np,\bp)$ in \eqref{eq:Xrho} is not uniquely determined. 
  It depends on the particular choice of the reference vector
  $\np(0)\perp\tp(0)$.  
  A different reference vector requires a suitably modified twist
  function $\theta$ in \eqref{eq:Xrho} to obtain the same
  support~$\Drho$ for the nanowire in \eqref{eq:DefDrho}. 
  To avoid this ambiguity, we use in the following a
  \emph{geometry adapted frame} $(\tp,\npada,\bpada)$, which is
  defined by 
  \begin{equation}
    \label{eq:AdaptedFrame}
    \bigl[\npada(s) \big| \bpada(s)\bigr]
    \,:=\, \bigl[\np(s) \big| \bp(s)\bigr] \Rtheta(s) \,,
    \qquad s\in [0,L] \,.
  \end{equation}
  Therewith,
  \begin{equation}
    \label{eq:DrhoAdapted}
    \Drho
    \,=\, \Bigl\{ \bfp(s) + \eta\,\npada(s) + \xi\,\bpada(s)
    \;\Big| \; s \in [0,L] \,,\;
    (\eta, \xi) \in \Drhoprime \Bigr\} .
  \end{equation}
  This means that the twisting of the cross-section along the spine
  curve in the description of the support~$\Drho$ of the nanowire is
  now included into the frame.
  The \emph{twist rate} $\beta := \theta'\in C([0,L],\R)$
  of the geometry adapted frame is defined by
  \begin{equation}
    \label{eq:TwistRate}
    \beta(s)
    \,=\, \npada'(s) \cdot \bpada(s)
    \,, \qquad s\in[0,L] \,.
  \end{equation}

We work with electric fields only, but the corresponding magnetic
fields can immediately be obtained from the associated first order
Maxwell systems. 
Let $\eps_0 := 8.85 \times 10^{-12}$~Fm$^{-1}$
and~${\mu_0 := 1.25 \times 10^{-6}}$~Hm$^{-1}$ be the
\emph{electric permittivity} and the \emph{magnetic permeability} of
free space, respectively. 
Denoting by $f>0$ the \emph{frequency}, the angular frequency is
$\omega := 2\pi f$, and the associated \emph{wave number} is given by
$k := \omega\sqrt{\eps_0\mu_0}$. 
An \emph{electric incident field}~$\Ei$ is a solution to Maxwell's
equations 
\begin{equation}
  \label{eq:Ei}
  \curl\curl\Ei - k^2 \Ei
  \,=\, 0 \qquad \text{in } \Rd \,.
\end{equation}

\begin{table}[thp]
  \centering
  \begin{tabular}{c c r r c r r }
    \hline
    & & \multicolumn{2}{c}{Silver} & & \multicolumn{2}{c}{Gold}\\ \hline
    $f$ & & $\real(\epsr)$ & $\imag(\epsr)$ & & $\real(\epsr)$ & $\imag(\epsr)$ \\ \hline
    300 THz & & -50.55 & 0.57 & & -41.78 & 2.94 \\ 
    350 THz & & -36.23 & 0.48 & & -28.84 & 1.77 \\ 
    400 THz & & -26.94 & 0.32 & & -20.11 & 1.24 \\ 
    450 THz & & -20.57 & 0.44 & & -14.10 & 1.04 \\ 
    500 THz & & -16.05 & 0.44 & & -9.36 & 1.53 \\ 
    550 THz & & -12.62 & 0.42 & & -5.59 & 2.19 \\ 
    600 THz & & -9.78 & 0.31 & & -2.54 & 3.65 \\
    650 THz & & -7.64 & 0.25 & &  -1.73 & 5.06 \\ 
    700 THz & & -5.94 & 0.20 & &  -1.69 & 5.66 \\ 
    750 THz & & -4.41 & 0.21 & &  -1.66 & 5.74 \\ 
    800 THz & & -3.10 & 0.21 & &  -1.50 & 5.63 \\ \hline 
  \end{tabular}
  \caption{Relative electric permittivities $\epsr$ of silver and gold
    at optical frequencies (from~\cite{JoChri72}).}
  \label{tab:MaterialParameters}
\end{table}

We suppose that the thin nanowire is made of a noble metal like
silver or gold. 
At optical frequencies these materials are characterized by a
frequency-dependent complex-valued
\emph{relative electric permittivity}~$\epsr\in\C$ with negative real
part $\real(\epsr)<0$ and positive imaginary part $\imag(\epsr)>0$
(i.e., their electric permittivity is $\eps_m:=\eps_0\epsr$), while the
magnetic permeability is the same as for the surrounding free space. 
The real and imaginary parts of the relative electric
permittivities~$\epsr$ of silver and gold at optical frequencies as
considered in this work are retrieved from the data base
\cite{JoChri72}. 
They are shown in Table~\ref{tab:MaterialParameters}.
We prefer these experimental data to the classical Drude model,
because the Drude model is known to be rather inaccurate across the
optical band (see e.g.,~\cite[pp.~117--118]{Greffet12}).

We denote by
\begin{equation}
  \label{eq:epsrho}
  \eps_\rho
  \,:=\, \eps_0 + (\eps_m-\eps_0) \chi_{\Drho}
\end{equation}
the piecewise constant electric permittivity distribution on~$\Rd$ in
the presence of the nanowire.
Here, $\chi_{\Drho}$ is the indicator function for $\Drho$ from
\eqref{eq:DefDrho}.  
The scattering problem consists in finding the \emph{total electric
  field}~$\Etrho$ satisfying
\begin{subequations}
  \label{eq:ScatteringProblem}
  \begin{equation}
    \label{eq:Et}
    \curl \curl \Etrho - \omega^2 \mu_0 \eps_\rho \Etrho
    \,=\, 0 \qquad \text{in } \Rd \,,
  \end{equation}
  such that the \emph{scattered electric field} 
  \begin{equation}
    \label{eq:Es}
    \Esrho
    \,:=\, \Etrho - \Ei
  \end{equation}
  fulfills the Silver-M\"uller radiation condition
  \begin{equation}
    \label{eq:SilverMueller}
    \lim_{|\bfx|\to\infty} \left( \curl\Esrho(\bfx)\times\bfx
      - \rmi k|\bfx| \Esrho(\bfx) \right)
    \,=\, 0 
  \end{equation}
\end{subequations}
uniformly with respect to all directions 
$\xhat := \bfx/|\bfx| \in\Stwo$. 

\begin{lemma}
  \label{lmm:Wellposedness}
  Let $\eps_r\in\C$ with $\real(\eps_r)<0$ and $\imag(\eps_r)>0$, and
  let $\Drho\tm\Rd$ and $\eps_\rho$ be as in \eqref{eq:DefDrho} and
  \eqref{eq:epsrho} for some $0<\rho<\rGamma$, respectively.  
  Suppose that $\Ei \in H_{\loc}(\curl;\R^3)$
  satisfies \eqref{eq:Ei}.
  Then, the scattering problem~\eqref{eq:ScatteringProblem} has a
  unique solution $\Etrho \in H_{\loc}(\curl;\Rd)$. 
  The corresponding scattered electric field $\Esrho$ has the
  asymptotic behavior 
  \begin{equation*}
%     \label{eq:FFEx}
    \Esrho(\bfx)
    \,=\, \frac{\rme^{\rmi k |\bfx|}}{4\pi |\bfx|}
    \left( \Einftyrho(\xhat) + \mathcal{O}(|\bfx|^{-1}) \right)
    \qquad \text{as }|\bfx|\to\infty
  \end{equation*}
  uniformly in $\xhat\in\Sd$.
  The vector function $\Einftyrho \in \LttSd$ is called the
  \emph{electric far field pattern}.\footnote{As usual $\LttSd$
    denotes the vector space of square integrable tangential vector
    fields on the unit sphere.} 
\end{lemma}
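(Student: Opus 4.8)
The plan is to run the classical solution theory for time-harmonic scattering by a penetrable, absorbing obstacle and to check that the negative real part of $\epsr$ spoils none of its ingredients. Writing the contrast as $q := (\epsr-1)\chi_{\Drho}$, so that $\eps_\rho = \eps_0(1+q)$ and $\eqref{eq:Et}$ becomes $\curl\curl\Etrho - k^2(1+q)\Etrho = 0$, I would first reduce the problem on all of $\Rd$ to one on a ball $\BR \supseteq \overline{\Drho}$ by coupling the interior equation to the exterior Calderón (electric-to-magnetic) operator $\Lambda$ on $\di\BR$, which encodes the Silver--M\"uller condition exactly via the expansion of radiating fields in vector spherical wave functions. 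The resulting weak problem is to find $\Etrho\in H(\curl;\BR)$ with $a(\Etrho,\bfF)=\ell(\bfF)$ for all test fields $\bfF$, where
\[
a(\bfE,\bfF) \,:=\, \int_{\BR}\bigl(\curl\bfE\cdot\overline{\curl\bfF} - k^2(1+q(\bfx))\,\bfE\cdot\overline{\bfF}\bigr)\dx \,-\, \langle\Lambda(\nu\times\bfE),\nu\times\bfF\rangle_{\di\BR} \,,
\]
and $\ell$ collects the data from $\Ei$.

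Next I would establish that $a$ is Fredholm of index zero. It differs from the coercive $H(\curl;\BR)$-form $\int(\curl\bfE\cdot\overline{\curl\bfF}+\bfE\cdot\overline{\bfF})\dx$ only through the lower-order volume term $-\int(k^2(1+q)+1)\bfE\cdot\overline{\bfF}\dx$ and the boundary term $\langle\Lambda\cdot,\cdot\rangle$. Since $H(\curl;\BR)$ does not embed compactly into $L^2$, the compactness of these remainders is not automatic; I would extract it from the Hodge/Helmholtz decomposition of $H(\curl;\BR)$ into gradients and a complementary subspace to which the Weber--Weck compact embedding theorem applies, together with the smoothing properties of $\Lambda$. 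This yields a G\aa rding inequality, hence Fredholmness of index zero, so that existence and continuous dependence follow as soon as uniqueness is shown.

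Uniqueness is the crux, and it is exactly here that the sign hypotheses enter. Setting $\Ei=0$, a solution $\Esrho=\Etrho$ is a source-free radiating field; testing with $\bfF=\Esrho$ and integrating by parts gives the energy identity
\[
\int_{\BR}\bigl(|\curl\Esrho|^2 - k^2(1+q)|\Esrho|^2\bigr)\dx \,=\, \int_{\di\BR}(\nu\times\curl\Esrho)\cdot\overline{\Esrho}\ds \,.
\]
Taking imaginary parts, the left-hand side collapses to $-k^2\,\imag(\epsr)\int_{\Drho}|\Esrho|^2\dx$, because $q$ is real outside $\Drho$ and $\imag(1+q)=\imag(\epsr)$ on $\Drho$; the right-hand side is the outward energy flux, which for a radiating field is a positive multiple of $\|\Einftyrho\|^2_{\LttSd}\ge 0$ and is independent of $R$ in the source-free exterior. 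Since $\imag(\epsr)>0$, both sides must vanish, forcing $\Esrho=0$ in $\Drho$ and $\Einftyrho=0$; Rellich's lemma and the unique continuation principle then propagate $\Esrho\equiv 0$ through the connected exterior $\Rd\setminus\overline{\Drho}$. I would emphasize that this argument uses only $\imag(\epsr)>0$ and never needs to control the sign of $\real(\epsr)$, which is why the dielectric theory of \cite{CapGriKno21} carries over verbatim to the metallic case.

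Finally, with a unique $\Etrho\in H_{\loc}(\curl;\Rd)$ in hand, the far-field asymptotics follow by representing $\Esrho$ outside $\overline{\Drho}$ through the Stratton--Chu formula with the radiating dyadic Green's function, whose kernel expands as $\rme^{\rmi k|\bfx|}/(4\pi|\bfx|)$ times a tangential amplitude plus $\mathcal{O}(|\bfx|^{-2})$ as $|\bfx|\to\infty$; reading off the leading coefficient yields $\Einftyrho\in\LttSd$ and the claimed expansion, uniform in $\xhat\in\Sd$. I expect the main obstacle to be the Fredholm step of the second paragraph, where the failure of compact embedding of $H(\curl;\BR)$ into $L^2$ must be circumvented by the Hodge decomposition and the vector compactness theorem; by contrast, uniqueness should be comparatively painless, since the absorption $\imag(\epsr)>0$ does all the work and the delicate sign $\real(\epsr)<0$ is simply inert.
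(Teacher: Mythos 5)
Your proposal is correct and takes essentially the same route as the paper, whose proof simply delegates the three steps to Monk's monograph: uniqueness as in Thm.~10.1 there (absorption $\imag(\eps_r)>0$ plus Rellich and unique continuation, with $\real(\eps_r)$ playing no role), existence via the Riesz--Fredholm variational argument of Sect.~10.3 (reduction to $\BR$ with the exterior Calder\'on operator and a G\aa rding inequality obtained from the Helmholtz decomposition), and the far-field expansion from Cor.~9.5. The only detail worth making explicit in the Fredholm step is that the $\eps_\rho$-weighted Helmholtz decomposition requires the scalar form $\int_{\BR}\eps_\rho\nabla p\cdot\overline{\nabla q}$ to be coercive after multiplication by a suitable $\rme^{\rmi\gamma}$, which holds because the segment from $\eps_0$ to $\eps_m$ avoids the origin when $\imag(\eps_m)>0$ --- the same rotation device the paper uses in Lemma~\ref{lmm:PolTenBounds}.
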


\begin{proof}
  The uniqueness of solutions follows exactly as in the proof of
  \cite[Thm.~10.1]{Mon03}, where complex electric permittivities with
  positive real and imaginary parts have been considered. 
  The existence of solutions can be obtained applying Riesz-Fredholm
  theory, using the same arguments as in \cite[Sect.~10.3]{Mon03}.
  The far field expansion has been shown in \cite[Cor.~9.5]{Mon03}. 
\end{proof}

We focus on thin metallic nanowires, i.e., the scaling parameter
$0<\delta<\rGamma$ in \eqref{eq:DefDrho} is supposed to be small
relative to the \emph{wave length}~$\lambda = 2\pi/k$ and relative to
the total length $L$ of the nanowire. 
In this case, solutions to the scattering problem
\eqref{eq:ScatteringProblem} can be accurately described by
the following asymptotic representation formula.

\begin{theorem}
  \label{thm:GenAsy}
  Let $\eps_r\in\C$ with $\real(\eps_r)<0$ and $\imag(\eps_r)>0$, and
  for any $0<\rho<\rGamma$ let $\Drho\tm\Rd$ and $\eps_\rho$ be as in
  \eqref{eq:DefDrho} and~\eqref{eq:epsrho}, respectively. 
  Denoting by $\Ei$ an electric incident field, the electric far
  field pattern corresponding to the solution of the scattering
  problem \eqref{eq:ScatteringProblem} satisfies, for each
  $\xhat\in\Sd$, 
  \begin{equation}
    \label{eq:GenAsyEinfty}
    \begin{split}
      \Einftyrho(\xhat) 
      &\,=\, |B'| (k\rho)^2 \int_\Gamma
      (\epsr-1) \rme^{-\rmi k \xhat\cdot \bfy}
      \bigl((\xhat\times\I_3)\times\xhat\bigr)
      \Meps(\bfy) \Ei(\bfy) \ds(\bfy)
      + o\bigl((k\rho)^2\bigr)
    \end{split}
  \end{equation}
  as $\rho\to 0$.
  Here, $|B'|$ denotes the area of the rescaled cross-section
  $B'=\Drho'/\rho$, $\I_3\in\Rdd$ is the identity matrix, and the
  matrix valued function 
  $\Meps \in L^2(\Gamma,\Cdd)$ is the \emph{electric polarization tensor}.
  The term $o((k\rho)^2)$ in \eqref{eq:GenAsyEinfty} is such
  that~$\|o((k\rho)^2)\|_{L^\infty(\Sd)}/(k\rho)^2$ converges to zero
  for any fixed $\Ei$ satisfying $\|\Ei\|_{H(\curl;\BR)} \leq C$ for
  some fixed $C>0$. 
\end{theorem}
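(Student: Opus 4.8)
The plan is to derive the asymptotic formula \eqref{eq:GenAsyEinfty} by combining a volume integral representation of the far field pattern with the rescaling of the thin tube $\Drho$ onto a fixed reference domain, and then identifying the leading-order contribution as $\rho \to 0$. The starting point is the Lippmann--Schwinger type volume integral equation associated to the scattering problem \eqref{eq:ScatteringProblem}. Writing $q := \epsr - 1$ for the refractive index contrast and recalling that the far field pattern of the scattered field generated by a polarization density is obtained by integrating against the far field of the dyadic Green's function, one has, for each $\xhat \in \Sd$,
\begin{equation*}
  \Einftyrho(\xhat)
  \,=\, k^2 \int_{\Drho} q\, \rme^{-\rmi k\, \xhat\cdot\bfz}
  \bigl((\xhat\times\I_3)\times\xhat\bigr)\, \Etrho(\bfz) \,\dif\bfz \,,
\end{equation*}
where the projector $(\xhat\times\I_3)\times\xhat$ encodes the tangential projection in the far field asymptotics from Lemma~\ref{lmm:Wellposedness}.

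Next I would change variables using the geometry adapted coordinate map from \eqref{eq:DrhoAdapted}, writing $\bfz = \bfp(s) + \eta\,\npada(s) + \xi\,\bpada(s)$ with $(\eta,\xi)\in\Drhoprime = \rho B'$, and then rescale the cross-sectional variables by setting $\eta = \rho\,\hat\eta$, $\xi = \rho\,\hat\xi$ so that $(\hat\eta,\hat\xi)$ ranges over the fixed domain $B'$. The Jacobian of $\XK$ produces a factor $1 + \mathcal{O}(\rho)$ uniformly in $s$ and in the rescaled cross-sectional variables (the curvature and twist contributions are bounded because $\rho < \rGamma < 1/\kappamax$), and the cross-sectional rescaling contributes $\rho^2$, yielding the overall prefactor $(k\rho)^2$ after pulling out $k^2$. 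Expanding $\rme^{-\rmi k\,\xhat\cdot\bfz} = \rme^{-\rmi k\,\xhat\cdot\bfp(s)}(1 + \mathcal{O}(\rho))$ and noting that the incident field factors are continuous, the leading term collapses the cross-sectional integral onto $\bfp(s)$ and produces the line integral over $\Gamma$ with the area factor $|B'|$; the remaining task is to show that the total field $\Etrho$ restricted to the tube, after rescaling, converges to the correct limiting profile, which is precisely where the electric polarization tensor $\Meps$ enters.

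The heart of the matter, and \textbf{the main obstacle}, is establishing that the rescaled interior field converges and identifying its limit as $\Meps(\bfy)\Ei(\bfy)$. As $\rho\to 0$ the leading interior behaviour is governed by an electrostatic transmission problem on the cross-section: inside a thin inclusion the rapidly varying part of the field is determined by the jump conditions for the normal component of $\eps_\rho\Etrho$ and the tangential components of $\Etrho$ across $\di\Drho$, which in the cross-sectional limit reduces to a two-dimensional polarization problem on $B'$ with contrast $\epsr$. The tensor $\Meps$ is exactly the effective polarizability of the rescaled cross-section $B'$, acting trivially (as the identity) in the tangential-to-$\Gamma$ direction and through the two-dimensional electrostatic polarization tensor in the normal plane spanned by $\npada(s), \bpada(s)$. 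Here the sign condition $\real(\epsr) < 0$, $\imag(\epsr) > 0$ is essential: it guarantees that the relevant cross-sectional transmission operator is invertible (the plasmonic resonance values of the Neumann--Poincar\'e operator are avoided because the imaginary part keeps $\epsr$ off the real spectrum), so that $\Meps$ is well-defined and lies in $L^2(\Gamma,\Cdd)$. The companion results in the appendix, which provide estimates for the explicit characterization of $\Meps$, are invoked precisely to control this cross-sectional problem uniformly along $\Gamma$.

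Finally, I would make the error bound quantitative. Both the Jacobian expansion and the exponential expansion contribute terms of relative order $\mathcal{O}(\rho)$, and the difference between the rescaled interior field and its limiting profile must be shown to vanish in the appropriate norm; combining these, the remainder is $o((k\rho)^2)$ uniformly in $\xhat$. The uniformity claim in the statement, namely that $\|o((k\rho)^2)\|_{\Linfty(\Sd)}/(k\rho)^2 \to 0$ for incident fields bounded in $H(\curl;\BR)$, follows because all the error estimates depend on $\Ei$ only through $\|\Ei\|_{H(\curl;\BR)}$ via continuous dependence of $\Etrho$ on the data and the continuity of the trace of $\Ei$ along $\Gamma$. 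Since this argument closely parallels the dielectric case treated in \cite{AlbCap18,BerCapdeGFra09,CapGriKno21,Gri11}, the bulk of the work lies in verifying that the metallic sign regime $\real(\epsr)<0$ does not disrupt the invertibility and boundedness properties underpinning the construction of $\Meps$, which is the content deferred to the appendix.
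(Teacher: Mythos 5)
Your proposal is correct and follows essentially the same route as the paper: the paper's own proof simply observes that the low-volume asymptotic argument of the cited dielectric-case references carries over once well-posedness (Lemma~\ref{lmm:Wellposedness}) and the polarization-tensor estimates for the metallic sign regime (deferred to the appendix) are in place, and your sketch is a faithful outline of that underlying argument — volume integral representation, rescaling of the cross-section, and identification of the weak limit of the rescaled interior field via $\Meps$. The one point worth making explicit is that the convergence defining $\Meps$ in \eqref{eq:DefPolTen} is only in the averaged sense against test functions $\psi\in C(\ol{\BR})$, not pointwise, and that the uniform energy bounds for the corrector potentials in the sign-changing regime rest on the complex rotation $\rme^{\rmi\gamma}$ argument of Lemma~\ref{lmm:PolTenBounds}(c), which is exactly the content the paper relegates to the appendix.
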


Theorem~\ref{thm:GenAsy} is a special case of a more general result
that has been established for scatterers of low volume with complex
electric permittivities with positive real and nonnegative imaginary part in
\cite{AlbCap18,CapGriKno21,Gri11} (see also~\cite{CapVog03} for an
earlier version of this asymptotic perturbation formula in
electrostatics). 
Using the well-posedness from Lemma~\ref{lmm:Wellposedness}, this
result including its proof carries over to the case of metallic
nanowires with complex electric permittivities with negative real
and positive imaginary part as considered in this work. 

The electric polarization tensor $\Meps \in L^2(\Gamma,\Cdd)$ in
\eqref{eq:GenAsyEinfty} is defined as follows (see
\cite{CapGriKno21,Gri11}). 
Let~$R>0$ be sufficiently large such that ${\Drho \subset\subset \BR}$
for all $0<\rho<\rGamma$. 
For any $\bfxi\in\Sd$, we denote by ${W_\rho^{(\bfxi)}\in H^1(\BR)}$
the \emph{corrector potentials} satisfying
\begin{equation}
  \label{eq:CorrectorPotential}
  \div\bigl(\eps_{\rho} \nabla W_\rho^{(\bfxi)}\bigr) 
  \,=\, -\div\bigl((\eps_{\rho}-\eps_0) \bfxi\bigr) \quad 
  \text{in $\BR$} \,, \qquad 
  W_\rho^{(\bfxi)} \,=\, 0 \quad \text{on $\di\BR$} \,.
\end{equation}
Then, the electric polarization tensor is uniquely determined by 
\begin{equation}
  \label{eq:DefPolTen}
  \frac{1}{|\Gamma|} \int_\Gamma \bfxi\cdot  \Meps \bfxi \psi \ds
  \,=\, \frac{1}{|\Drho|} \int_{\Drho} |\bfxi|^2 \psi \dx
  + \frac{1}{|\Drho|} \int_{\Drho} 
  \bigl(\bfxi\cdot\nabla W_\rho^{(\bfxi)} \bigr)\psi \dx
  + o(1) \qquad \text{as } \rho \to 0 
\end{equation}
for all $\psi\in C(\ol{\BR})$ and any $\bfxi\in\Stwo$.
In contrast to the very general situation considered in
\cite[Thm.~1]{CapGriKno21}, no extraction of a subsequence is
required in \eqref{eq:DefPolTen} under our assumptions on the geometry
of the cross-sections (see \cite[Rem.~4.5]{CapGriKno21}). 

\begin{remark}[Dielectric nanowires]
  For dielectric nanowires, i.e., when $\epsr>0$ is positive, the
  electric polarization tensor $\Meps \in L^2(\Gamma,\Rdd)$ is a
  real-valued, symmetric, and positive definite matrix pointwise a.e.\
  on $\Gamma$ (see \cite[Sec.~4]{CapVog03}).
  In particular, it can be diagonalized. 
  A pointwise characterization of the eigenvalues and eigenvectors of
  $\Meps$ has been established in~\cite[Thm.~4.1]{CapGriKno21} (see
  \cite{BerCapdeGFra09} for the special case when~$\Gamma$ is a straight
  line segment). 
  It has been shown that, for a.e.\ $s\in[0,L]$, the tangent vector
  $\tp(s)$ is an eigenvector of the electric polarization tensor
  $\Meps(\bfp(s))$ corresponding to the eigenvalue~$1$. 
  Furthermore, using the vectors~$(\npada(s),\bpada(s))$ from
  \eqref{eq:AdaptedFrame} as a basis of the plane orthogonal
  to~$\tp(s)$, the electric polarization tensor reduces to the
  corresponding two-dimensional electric polarization tensor $\meps$
  associated to the rescaled cross-sections~$B' = \Drhoprime/\rho$ in
  this plane. 
  Here, $\meps = (\meps_{ij}) \in \C^{2\times 2}$ is given by
  \begin{equation}
    \label{eq:mepsij}
    \meps_{ij} 
    \,=\, \delta_{ij} + \frac1{|B'|} \int_{B'} 
    \frac{\di w_j}{\di x'_i}(\bfx') \dx' \,,
    \qquad 1\leq i,j\leq 2 \,,
  \end{equation}
  where $\delta_{ij}$ denotes the Kronecker delta, and
  $w_j\in H^1_\loc(\R^2)$ is the unique solution to the transmission
  problem 
  \begin{subequations}
    \label{eq:wj}
    \begin{align}
      \Delta w_j
      &\,=\, 0 \qquad \quad\text{in } \R^2\setminus\di B' \,,\\
      w_j\big|_{\di B'}^+ - w_j\big|_{\di B'}^- 
      &\,=\, 0 \,,\\
      \frac{\di w_j}{\di\bfnu}\Big|_{\di B'}^+ 
      - \eps_r \frac{\di w_j}{\di\bfnu}\Big|_{\di B'}^-
      &\,=\, (\eps_r-1) \nu_j \,,\\
      w_j(\bfx') &\to 0 \qquad \quad \text{as $|\bfx'|\to\infty$} \,.
    \end{align}
  \end{subequations}
  (see \cite[Rem~4.5]{CapGriKno21}).
  Given any specific geometry for $B'$, the functions $w_j$,
  $j=1,2$, in \eqref{eq:mepsij} can be approximated by solving the
  two-dimensional transmission problem \eqref{eq:wj} numerically.
  Then, the two-dimensional electric polarization tensor $\meps$ can be
  evaluated by applying a quadrature rule to the integral
  in~\eqref{eq:mepsij}. 
  In the special case when $B'$ is an ellipse with semi axes of
  lengths $a$ and $b$ for some $0< a \leq b< 1$ that are aligned with
  the coordinate axes in $\Rtwo$, the transmission problem
  \eqref{eq:wj} can be solved by separation of variables.
  Then, the integral in \eqref{eq:mepsij} can be evaluated explicitly
  to obtain that 
  \begin{equation}
    \label{eq:meps}
    \meps
    \,=\, \begin{bmatrix}
      \frac{a+b}{a+\epsr b}& 0 \\
      0 & \frac{a+b}{b+\epsr a}
    \end{bmatrix}
  \end{equation}
  (see, e.g., \cite{AmmKan07,BruHanVog03}).
  Here, the semi axis of length $a$ is aligned with the $x_1$-axis and
  the semi axis of length $b$ is aligned with the
  $x_2$-axis.~\hfill$\lozenge$ 
\end{remark}

When $\epsr$ is complex-valued with negative real and positive
imaginary part, the electric polarization tensor
$\Meps\in L^2(\Gamma,\Cdd)$ is still symmetric pointwise a.e.\ on
$\Gamma$ (see Lemma~\ref{lmm:PolTenBounds} in the appendix), but this
no longer implies diagonalizability.  
However, the following statements of \cite[Thm.~4.1]{CapGriKno21}
including their proofs remain valid for such complex-valued relative
electric permittivities.

\begin{proposition}
  \label{pro:CharacterizationPolTen}
  Suppose $\eps_r\in\C$ with $\real(\eps_r)<0$ and $\imag(\eps_r)>0$,
  and for any $0<\rho<\rGamma$ let $\Drho\tm\Rd$ and $\eps_\rho$ be as
  in \eqref{eq:DefDrho} and \eqref{eq:epsrho}, respectively. 
  Let $\Meps\in L^2(\Gamma,\Cdd)$ be the electric polarization tensor
  defined in~\eqref{eq:CorrectorPotential}--\eqref{eq:DefPolTen} and
  denote by $\meps\in\C^{2\times 2}$ the polarization tensor 
  corresponding to the rescaled cross-section~$B'=\Drhoprime/\rho$ defined
  in~\eqref{eq:mepsij}--\eqref{eq:wj}. 
  Then, the following pointwise characterization of $\Meps$ holds.
  \begin{enumerate}[(a)]
  \item Let $\tp(s)$ be the unit tangent vector at $\bfp(s)\in\Gamma$,
    then 
    \begin{equation}
      \label{eq:CharacterizationPolTen(a)}
      \tp(s) \cdot \Meps(\bfp(s)) \tp(s) \,=\, 1 \qquad
      \text{for a.e.\ $s\in [0,L]$}\,.
    \end{equation}
  \item Let $(\npada, \bpada)$ be the normal components of the
    geometry adapted frame $(\tp, \npada, \bpada)$ from
    \eqref{eq:AdaptedFrame}, let~$\bfxi'\in \Sone$, and let
    $\bfxi\in C^1([0,L],\Stwo)$ be given by
    $\bfxi(s) := \bigl[ \npada(s) , \bpada(s) \bigr] \bfxi' \in \Stwo$
    for all~${s\in[0,L]}$.  
    Then,
    \begin{equation*}
%       \label{eq:CharacterizationPolTen(b)}
      \bfxi(s) \cdot \Meps(\bfp(s)) \bfxi(s) 
      \,=\, \bfxi' \cdot \meps \bfxi' \qquad 
      \text{for a.e.\ $s\in [0,L]$}\,.
    \end{equation*}
  \end{enumerate}
\end{proposition}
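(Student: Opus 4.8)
The plan is to follow the blow-up argument in the proof of \cite[Thm.~4.1]{CapGriKno21} and to verify that each step survives the passage from positive to negative real part of $\epsr$. The starting point is the defining identity \eqref{eq:DefPolTen}, so the task reduces to evaluating, for a.e.\ fixed $s_0\in[0,L]$ and for the two choices of direction, the $\rho\to0$ limits of the two volume integrals on its right-hand side when $\psi\in C(\overline{\BR})$ is localized near $\bfp(s_0)$. For the geometric normalization I would use the coordinates \eqref{eq:Xrho} and the adapted-frame description \eqref{eq:DrhoAdapted}: since $\rho<\rGamma<1/\kappamax$, the Jacobian of $\XK$ equals $1+\mathcal{O}(\rho)$ uniformly, so $|\Drho|=|B'|\rho^2|\Gamma|(1+\mathcal{O}(\rho))$ and the first integral obeys $\frac{1}{|\Drho|}\int_{\Drho}|\bfxi|^2\psi\dx\to\frac{1}{|\Gamma|}\int_\Gamma\psi\ds$ for any unit direction. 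Hence the whole content sits in the corrector term $\frac{1}{|\Drho|}\int_{\Drho}(\bfxi\cdot\nabla W_\rho^{(\bfxi)})\psi\dx$.

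The decisive step is a blow-up of the corrector problem \eqref{eq:CorrectorPotential} at scale $\rho$ in the cross-sectional variables around $\bfp(s_0)$. Freezing the adapted frame at its value $(\tp(s_0),\npada(s_0),\bpada(s_0))$, the rescaled potential converges, cross-section by cross-section, to the solution of the two-dimensional transmission problem \eqref{eq:wj}, whose source $(\epsr-1)\nu_j$ is driven only by the \emph{transverse} components of $\bfxi$ in the adapted basis. This single reduction yields both parts. For part (a), where $\bfxi=\tp(s_0)$ has no transverse component, the limiting source vanishes, the blow-up limit is zero, and the corrector term does not contribute; combined with the first-integral limit this gives $\frac{1}{|\Gamma|}\int_\Gamma\tp\cdot\Meps\tp\,\psi\ds=\frac{1}{|\Gamma|}\int_\Gamma\psi\ds$ for all localized $\psi$, i.e.\ \eqref{eq:CharacterizationPolTen(a)}.

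For part (b), with $\bfxi(s)=[\npada(s)\,|\,\bpada(s)]\bfxi'$ purely transverse, the blow-up limit equals $\xi_1'w_1+\xi_2'w_2$, the solution of \eqref{eq:wj} for the data $\bfxi'$. Passing to the limit in the corrector term and comparing with the definition \eqref{eq:mepsij} of $\meps$, the transverse gradient contribution reproduces exactly $\bfxi'\cdot\meps\bfxi'-|\bfxi'|^2$, while $|\bfxi'|^2=1$ comes from the first integral; together this identifies $\bfxi(s)\cdot\Meps(\bfp(s))\bfxi(s)$ with $\bfxi'\cdot\meps\bfxi'$ for a.e.\ $s$.

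The main obstacle --- and the only genuinely new point compared with \cite{CapGriKno21} --- is that for $\real(\epsr)<0$ the sesquilinear forms governing both the full corrector problem \eqref{eq:CorrectorPotential} and the limiting transmission problem \eqref{eq:wj} lose coercivity, so neither the $\rho$-uniform $H^1$-bounds on $W_\rho^{(\bfxi)}$ needed to extract the blow-up limit nor the well-posedness of \eqref{eq:wj} follow from Lax--Milgram as in the real case. The remedy is the sign condition $\imag(\epsr)>0$: testing each weak formulation against its own solution and taking imaginary parts produces a one-sided estimate on the Dirichlet energy inside the inclusion, which together with the uniformly bounded source furnishes precisely the a priori bounds required. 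These are established in Lemma~\ref{lmm:PolTenBounds} in the appendix, which also yields the pointwise symmetry of $\Meps$ and the unique solvability of \eqref{eq:wj}; granting them, every remaining step is verbatim the real-variable argument, and under the present assumptions on $B'$ no subsequence extraction is needed (cf.\ \cite[Rem.~4.5]{CapGriKno21}).
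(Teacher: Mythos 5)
Your reconstruction follows essentially the same route as the paper, which proves this proposition simply by asserting that the statements of \cite[Thm.~4.1]{CapGriKno21} ``including their proofs remain valid'' for $\real(\eps_r)<0$, $\imag(\eps_r)>0$: the blow-up/localization argument you describe is precisely that proof, and your identification of the genuinely new ingredient---restoring the a priori control lost with coercivity by exploiting $\imag(\eps_r)>0$ (equivalently, the rotation $\rme^{\rmi\gamma}$ appearing in Lemma~\ref{lmm:PolTenBounds})---is exactly what the paper relies on. The only slight over-attribution is that Lemma~\ref{lmm:PolTenBounds} does not itself state the uniform $H^1$-bounds on the correctors $W_\rho^{(\bfxi)}$ or the unique solvability of \eqref{eq:wj}; these follow from the same sign/rotation device used in its proof (and from $\imag(\eps_r)>0$ keeping $\eps_r$ off the real plasmonic spectrum, cf.\ Remark~\ref{rem:spr}), so the argument stands.
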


As in the dielectric case, we use pointwise polarization tensor
bounds to show that the unit tangent vector~$\tp(s)$ is in fact an
eigenvector of $\Meps(\bfp(s))$ corresponding to the eigenvalue $1$ for
a.e.~$s\in [0,L]$.
These bounds, which are different from the corresponding polarization
tensor bounds in the dielectric case from~\cite{CapVog03}, are shown
in Lemma~\ref{lmm:PolTenBounds} in the appendix. 

\begin{lemma}
  \label{lmm:TangentEigenvector}
  Suppose $\eps_r\in\C$ with $\real(\eps_r)<0$ and $\imag(\eps_r)>0$,
  and for any $0<\rho<\rGamma$ let $\Drho\tm\Rd$ and~$\eps_\rho$ be as
  in \eqref{eq:DefDrho} and \eqref{eq:epsrho}, respectively. 
  Let $\Meps\in L^2(\Gamma,\Cdd)$ be the electric polarization tensor
  defined in~\eqref{eq:CorrectorPotential}--\eqref{eq:DefPolTen}. 
  Then, the unit tangent vector $\tp(s)$ is an eigenvector of the
  matrix $\Meps(\bfp(s))$ corresponding to the eigenvalue $1$ for a.e.\
  $s\in[0,L]$. 
\end{lemma}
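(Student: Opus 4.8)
The plan is to argue pointwise. First I would fix a point $\bfp(s)\in\Gamma$ at which the conclusions of Proposition~\ref{pro:CharacterizationPolTen} and of the polarization tensor bounds in Lemma~\ref{lmm:PolTenBounds} both hold (this excludes only a null set of $s\in[0,L]$) and abbreviate $M:=\Meps(\bfp(s))\in\Cdd$. By Lemma~\ref{lmm:PolTenBounds} the matrix $M$ is complex symmetric, and by Proposition~\ref{pro:CharacterizationPolTen}~(a) we have $\tp\cdot M\tp=1$. Writing $M$ in the orthonormal frame $(\tp,\npada,\bpada)$, Proposition~\ref{pro:CharacterizationPolTen} already pins down the $\tp\tp$-entry (equal to $1$) and, via part~(b), the full transverse $2\times2$ block (equal to $\meps$); the only entries not yet determined are the two tangent--transverse couplings $\tp\cdot M\npada$ and $\tp\cdot M\bpada$. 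The target $M\tp=\tp$ is equivalent to showing that both of these vanish: symmetry of $M$ then forces $(M\tp)\cdot\bfv=0$ for every real $\bfv\perp\tp$, so $M\tp$ is parallel to $\tp$, and $\tp\cdot M\tp=1$ fixes the eigenvalue.

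The key input I would extract from Lemma~\ref{lmm:PolTenBounds} is a confinement of the reciprocal quadratic form $h(\bfxi):=1/(\bfxi\cdot M\bfxi)$, for real unit vectors $\bfxi\in\Stwo$, to a fixed convex region $\Omega\subset\C$ determined by $\eps_r$. For an axis-aligned elliptical cross-section one reads off from \eqref{eq:meps} that the reciprocal eigenvalues $1/\meps_{11}=(a+\eps_r b)/(a+b)$ and $1/\meps_{22}=(b+\eps_r a)/(a+b)$ are convex combinations of $1$ and $\eps_r$, so that the tangent value $h(\tp)=1$ is a \emph{corner} of $\Omega$; in general the bounds give $\real\,h(\bfxi)\le 1$ and $\imag\,h(\bfxi)\ge 0$ for all real unit $\bfxi$, with equality at $\bfxi=\tp$. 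In particular $\bfxi\cdot M\bfxi\neq 0$, so $h$ is well defined and smooth, and at $\tp$ two linearly independent real functionals (here $\real$ and $\imag$) are simultaneously extremized.

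Next I would exploit this corner configuration by a first-order variation. For an arbitrary real unit vector $\bfv\perp\tp$ consider the great circle $\bfxi(t):=\cos t\,\tp+\sin t\,\bfv$ and set $f(t):=\bfxi(t)\cdot M\bfxi(t)$. Then $f(0)=1$ and, by symmetry of $M$, $f'(0)=2\,\tp\cdot M\bfv$, so $h(\bfxi(t))=1/f(t)$ is smooth near $t=0$ with $h'(0)=-f'(0)=-2\,\tp\cdot M\bfv$. Since $t\mapsto\real\,h(\bfxi(t))$ attains an interior maximum and $t\mapsto\imag\,h(\bfxi(t))$ an interior minimum at $t=0$, both derivatives vanish there, giving $\real(\tp\cdot M\bfv)=0$ and $\imag(\tp\cdot M\bfv)=0$, hence $\tp\cdot M\bfv=0$. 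As $\bfv$ ranges over all real unit vectors orthogonal to $\tp$, symmetry of $M$ yields $(M\tp)\cdot\bfv=0$ for all such $\bfv$, so $M\tp=\mu\,\tp$; evaluating $\mu=\tp\cdot M\tp=1$ then upgrades \eqref{eq:CharacterizationPolTen(a)} to the desired eigenvector statement $M\tp=\tp$.

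The hard part will be the input from Lemma~\ref{lmm:PolTenBounds}: the whole argument rests on knowing that $1/(\bfxi\cdot M\bfxi)$ is trapped in a region of $\C$ whose boundary meets the tangent value $1$ in a genuine corner, i.e.\ that two linearly independent real functionals are extremized there. Proposition~\ref{pro:CharacterizationPolTen} alone is insufficient, since it never probes the tangent--transverse coupling, and the complex symmetry (rather than Hermiticity) of $M$ is exactly what blocks the short spectral/Rayleigh-quotient argument available in the dielectric case and forces this two-functional extremality analysis. I would therefore take care, when invoking the appendix, that the bounds hold uniformly enough in $\bfxi$ to license differentiating along $\bfxi(t)$ and to guarantee $\bfxi\cdot M\bfxi\neq0$ throughout the variation.
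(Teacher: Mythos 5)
Your variational skeleton is sound and is, in essence, the paper's own argument unrolled: the paper applies the min--max principle to the real symmetric matrices $\imag(\Meps)$ and $\real(\Meps)-c\,\imag(\Meps)$ (with $c=\imag(\rme^{\rmi\gamma}(\ol{\eps_m-\eps_0}))/\real(\rme^{\rmi\gamma}(\ol{\eps_m-\eps_0}))$), using that $\tp$ attains the extremal value of each quadratic form; your great-circle computation $f'(0)=2\,\tp\cdot M\bfv$ together with the first-order conditions at the extremum is exactly the content of that min--max step. Your claim that complex symmetry ``blocks the short Rayleigh-quotient argument'' is therefore slightly off: it survives once one splits into the right real linear combinations of $\real(\Meps)$ and $\imag(\Meps)$, which is what the paper does via \eqref{eq:IdentityReIm}. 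The conclusion $M\tp=\mu\tp$ with $\mu=\tp\cdot M\tp=1$ from $\tp\cdot M\bfv=0$ for all real $\bfv\perp\tp$ is correct.

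The one concrete gap is the ``key input'' you extract from Lemma~\ref{lmm:PolTenBounds}. The bounds $\real\,h(\bfxi)\le 1$ and $\imag\,h(\bfxi)\ge 0$ for the \emph{reciprocal} $h(\bfxi)=1/(\bfxi\cdot M\bfxi)$ do not follow from that lemma: \eqref{eq:InequImag} and \eqref{eq:InequReal} are affine (half-plane) constraints on $q(\bfxi):=\bfxi\cdot M\bfxi$ itself, and $z\mapsto 1/z$ does not carry half-planes to half-planes, so your inference from the ellipse formula \eqref{eq:meps} (which in any case only probes the transverse block, never the tangent direction or the couplings you are trying to kill) does not establish the asserted confinement of $h$. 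What Lemma~\ref{lmm:PolTenBounds} actually delivers, after dividing by the complex prefactors as in \eqref{eq:IdentityReIm}, are the two inequalities $\imag\bigl(\bfxi\cdot M\bfxi\bigr)\le 0$ (the paper's \eqref{eq:ProofComplexEigenvector2}) and $\real\bigl(\bfxi\cdot M\bfxi\bigr)-c\,\imag\bigl(\bfxi\cdot M\bfxi\bigr)\ge|\bfxi|^2$ (the paper's \eqref{eq:InequRealRewritten}), both with equality at $\bfxi=\tp$ by \eqref{eq:CharacterizationPolTen(a)}; these are two linearly independent real functionals of $q$ extremized at $\tp$, which is precisely the ``corner'' you need. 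Replace $\real\,h$ and $\imag\,h$ by these two functionals applied to $f(t)$ directly (no reciprocal needed, and no smoothness or nonvanishing issue arises since $f$ is a quadratic polynomial in $\cos t,\sin t$), and your argument closes. Note also that the sign of $c$ and the independence of the two functionals come from the explicit choice $\gamma=3\pi/2-(\alpha+\beta)/2$ in the proof of Lemma~\ref{lmm:PolTenBounds}(c); this is where the hypotheses $\real(\eps_r)<0$, $\imag(\eps_r)>0$ enter and cannot be skipped.
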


\begin{proof}
  From Lemma~\ref{lmm:PolTenBounds} in the appendix we obtain that
  there exists $\gamma \in (0, \pi/2)$ such that
  $\real(\rme^{\rmi \gamma}\ol{\eps_0}) >0$,
  $\real(\rme^{\rmi \gamma}\ol{\eps_m}) >0 $, and
  $\real(\rme^{\rmi \gamma}(\ol{\eps_m-\eps_0}))<0$.   
  Observing that, for any $\bfxi\in\Sd$,
  \begin{subequations}
    \label{eq:IdentityReIm}
    \begin{align}
      \bfxi \cdot  \real  \left( \rme^{\rmi \gamma}(\ol{\eps_m-\eps_0})
      \Meps \right) \bfxi
      &\,=\, \bfxi \cdot
        \left( \real \left( \rme^{\rmi \gamma}(\ol{\eps_m-\eps_0}) \right)
        \real(\Meps)
        -  \imag \left( \rme^{\rmi \gamma}(\ol{\eps_m-\eps_0}) \right)
        \imag(\Meps) \right) \bfxi \,, \\
      \bfxi \cdot  \imag  \left((\ol{\eps_m-\eps_0}) \Meps \right) \bfxi
      &\,=\, \bfxi \cdot
        \left( \real \left( (\ol{\eps_m-\eps_0}) \right) \imag(\Meps)
        +  \imag \left( (\ol{\eps_m-\eps_0}) \right)
        \real(\Meps) \right) \bfxi \,,
    \end{align}
  \end{subequations}
  the inequality \eqref{eq:InequReal} can be rewritten as
  \begin{equation}
    \label{eq:InequRealRewritten}
    |\bfxi|^2 
    \,\leq\, \bfxi \cdot \left( \real(\Meps(\bfx))
      - \frac{\imag \left( \rme^{\rmi \gamma}(\ol{\eps_m-\eps_0}) \right)}
      {\real \left( \rme^{\rmi \gamma}(\ol{\eps_m-\eps_0}) \right)}
      \imag(\Meps(\bfx)) \right) \bfxi
    \qquad \text{for every } \bfxi\in\Sd
    \text{ and a.e.\ } \bfx\in\Gamma \,.
  \end{equation}
  Moreover, subtracting \eqref{eq:InequReal} from \eqref{eq:InequImag}
  and using \eqref{eq:IdentityReIm} gives that
  \begin{equation}
    \label{eq:ProofComplexEigenvector1}
    \bfxi \cdot
    \left( \frac{\real(\ol{\eps_m-\eps_0})}{\imag(\ol{\eps_m-\eps_0})}
      + \frac{\imag(\rme^{\rmi \gamma}(\ol{\eps_m-\eps_0}))}
      {\real(\rme^{\rmi \gamma}(\ol{\eps_m-\eps_0}))} \right)
    \imag(\Meps(\bfx)) \bfxi
    \,\leq\, 0
    \qquad \text{for every } \bfxi\in\Sd \text{ and a.e.\ }
    \bfx\in\Gamma \,.
  \end{equation}
  Since the real factor on the left hand side of
  \eqref{eq:ProofComplexEigenvector1} is strictly positive by our
  assumptions on $\epsr$ and $\gamma$, 
  this implies that 
  \begin{equation}
    \label{eq:ProofComplexEigenvector2}
    \bfxi \cdot \imag(\Meps(\bfx)) \bfxi
    \,\leq\, 0 \qquad \text{for every } \bfxi\in\Sd \text{ and a.e.\ }
    \bfx\in\Gamma \,.
  \end{equation}
  Now, considering the imaginary part of
  \eqref{eq:CharacterizationPolTen(a)}, applying the min-max
  principle and using \eqref{eq:ProofComplexEigenvector2}, we find
  that the tangent vector $\tp(s)$ is an eigenvector of the
  self-adjoint matrix $\imag(\Meps(\bfp(s)))$ corresponding to the
  eigenvalue~$0$ for a.e.\ $s\in[0,L]$. 
  Similarly, considering the real part of 
  \eqref{eq:CharacterizationPolTen(a)}, applying the min-max
  principle and using \eqref{eq:InequRealRewritten}, shows that
  $\tp(s)$ is an eigenvector of the self-adjoint matrix
  $\real(\Meps(\bfp(s)))$ corresponding to the eigenvalue~$1$ for
  a.e.~$s\in[0,L]$.
  Since $\Meps = \real(\Meps) + \rmi \imag(\Meps)$, this ends the
  proof. 
\end{proof}

Combining Proposition~\ref{pro:CharacterizationPolTen} and
Lemma~\ref{lmm:TangentEigenvector} shows that the electric
polarization tensor $\Meps\in L^2(\Gamma,\Cdd)$ in the asymptotic
representation formula \eqref{eq:GenAsyEinfty} satisfies
\begin{subequations}
  \label{eq:PolTenExplicit}
  \begin{equation}
    \label{eq:PolTenExplicit1}
    \Meps(\bfp(s))
    \,=\, \Vptheta(s) M^\eps \Vptheta^\trans(s)
    \quad \text{for a.e.\ } s\in[0,L] \,,  
  \end{equation}
  where the matrix valued function $\Vptheta \in C^1([0,L],\SOd)$ is
  given by $\Vptheta := [ \tp | \npada | \bpada ]$,
  and
  \begin{equation}
    \label{eq:PolTenExplicit2}
    M^\eps
    \,:=\, \begin{bmatrix}
      1 & \hspace*{-\arraycolsep}\vline\hspace*{-\arraycolsep} &
      \begin{matrix}
        0 & 0
      \end{matrix} \\
      \hline
      \begin{matrix}
        0 \\ 0
      \end{matrix}
      & \hspace*{-\arraycolsep}\vline\hspace*{-\arraycolsep}
      & \meps 
    \end{bmatrix} \in \Cdd \,.
  \end{equation}
\end{subequations}
Therewith, the asymptotic representation formula
\eqref{eq:GenAsyEinfty} yields an efficient tool to evaluate the  
electric far field pattern due to a thin metallic nanowire.

\begin{remark}[Plasmonic resonances] \label{rem:spr}
  In the special case, when the reference cross-section $B'$ is an
  ellipse with semi axes of length $0<a\leq b<1$, we observe
  from~\eqref{eq:PolTenExplicit} and \eqref{eq:meps} that the norm of
  the polarization tensor is large, when either $\epsr\approx -a/b$ or
  $\epsr \approx -b/a$.
  For noble metals like silver or gold, $\imag(\epsr)>0$ is
  strictly positive, and therefore the polarization tensor does not
  become singular for any choice of $a/b$ (see
  Table~\ref{tab:MaterialParameters} for the relative electric
  permittivity $\eps_r$ of silver and gold at optical frequencies).
  However, according to~\eqref{eq:GenAsyEinfty} strong scattering
  occurs, whenever $\imag(\epsr)$ is small and $\real(\epsr)=-a/b$ or
  $\real(\epsr)=-b/a$.
  Following~\cite{AmmDenMil16} (see also~\cite{Gri14,NovHec12}) we call
  the corresponding frequencies $\fres$ \emph{plasmonic resonances}
  for $\Drho$. 
  
  Since $\real(\epsr)<-1$ for silver and gold across the optical band
  (see Table~\ref{tab:MaterialParameters}), and $0<a\leq b<1$ by
  assumption, i.e.,~$-\infty< -b/a\leq -1\leq -a/b\leq 0$, only the
  plasmonic resonance $\fres$ with $\real(\epsr(\fres))=-b/a$ can be
  excited for silver and gold nanowires with elliptical cross-sections
  at optical frequencies. 

  For general cross-sections $\Drho'$, plasmonic resonances can be
  determined by solving an eigenvalue problem for the
  Neumann-Poinc\'are operator on the boundary of the rescaled
  cross-section $B'=\Drho'/\rho$ (see~\cite{AmmDenMil16}). 
  The optimal design of two-dimensional shapes that resonate at
  particular frequencies has been considered
  in~\cite{AmmChoLiuZou15}.~\hfill$\lozenge$ 
\end{remark}

%%%%%%%%%%%%%%%%%%%%%%%%%%%%%%%%%%%%%%%%%%%%%%%%%%%%%%%%%%%%%%%%%%%%%% 
\section{Electromagnetic chirality}
\label{sec:Chirality}
%%%%%%%%%%%%%%%%%%%%%%%%%%%%%%%%%%%%%%%%%%%%%%%%%%%%%%%%%%%%%%%%%%%%%% 
In this section we briefly summarize the concept of electromagnetic
chirality.
For further details we refer to
\cite{AreHagHetKir18,FerFruRoc16,VavFer22}. 
An electric plane wave with \emph{direction of propagation}
$\bftheta\in\Sd$ and \emph{polarization}~$\bfA\in\C^3$,
which must satisfy $\bfA\cdot\bftheta=0$, is described by
the matrix $\Ei(\ph;\bftheta)\in\Cdd$ defined by 
\begin{equation*}
%   \label{eq:}
  \Ei(\bfx;\bftheta) \bfA
  \,=\, \bfA \,
  \rme^{\rmi k \bftheta \cdot \bfx} \,, \qquad \bfx\in\Rd \,,
\end{equation*}
We consider scattering of an electric plane wave at a thin
nanowire~$\Drho$ with relative electric permittivity~$\epsr$. 
Because of the linearity of \eqref{eq:ScatteringProblem} with respect
to the incident field, we can also express the scattered electric field
and the electric far field pattern by matrices $\Esrho(\ph;\bftheta)$
and $\Einftyrho(\ph;\bftheta)$, respectively.
Accordingly, the scattered field associated to an
\emph{electric Herglotz incident wave} 
\begin{equation}
  \label{eq:DefHerglotz}
  \Ei[\bfA](\bfx)
  \,:=\, \int_{\Sd} \bfA(\bftheta)
  \rme^{\rmi k \bftheta \cdot \bfx} \ds(\bftheta) \,,
  \qquad \bfx\in\Rd \,,
\end{equation}
with density $\bfA\in\LttSd$ is given by
\begin{equation*}
%   \label{eq:}
  \Esrho[\bfA](\bfx)
  \,=\, \int_{\Sd} \Esrho(\bfx;\bftheta) \bfA(\bfd) \ds(\bfd) \,,
  \qquad \bfx\in\Rd \,.
\end{equation*}
The \emph{electric far field operator}
$\FcalDrho: \LttSd \to \LttSd$, 
\begin{equation*}
%   \label{eq:}
  (\FcalDrho \bfA)(\xhat)
  \,:=\, \Einftyrho[\bfA](\xhat)
  \,=\, \int_{\Sd} \Einftyrho(\xhat;\bftheta) \bfA(\bftheta)
  \ds(\bftheta) \,,
  \qquad \xhat\in\Sd \,,
\end{equation*}
maps densities of electric Herglotz incident waves to the far field
patterns of the corresponding scattered electric fields. 
Since the kernel of this integral operator is smooth, it is compact
and of Hilbert-Schmidt class, i.e., $\FcalDrho\in\HS(\LttSd)$. 

Electromagnetic chirality describes different responses of scattering
objects to electromagnetic fields of positive and negative helicities.
An electric field $\Ei$, $\Etrho$, or $\Esrho$ in
$\Rd\setminus\ol{\Drho}$ is said to have \emph{helicity}~$\pm 1$ if it
is an eigenfunction of the operator $k^{-1}\curl$ associated to the
eigenvalue $\pm 1$.
Using the Riemann-Silberstein combinations~$\bfU\pm k^{-1}\curl\bfU$,
any solution to $\curl\curl\bfU - k^2\bfU=0$ in some subdomain
of~$\Rd$ can be decomposed into  a sum of two fields of helicity $+1$
and $-1$. 
The helicity of an electric Herglotz wave~$\Ei[\bfA]$ and of the
corresponding scattered electric field 
$\Esrho[\bfA]|_{\Rd\setminus\ol{\Drho}}$ is uniquely determined by the
density $\bfA$ and by the associated far field pattern
$\Einftyrho[\bfA]$, respectively.  
Decomposing $\LttSd = V^+\oplus V^-$ into the two orthogonal eigenspaces
\begin{equation}
  \label{eq:DefVpm}
  V^{\pm}
  \,:=\, \{ \bfA \pm \Ccal \bfA
  \;|\; \bfA \in \LttSd \} 
\end{equation}
of the self-adjoint linear operator $\Ccal: \LttSd\to\LttSd$,
\begin{equation*}
%   \label{eq:}
  (\Ccal \bfA)(\bftheta)
  \,:=\, \rmi \bftheta \times \bfA(\bftheta)
  \,, \qquad \bftheta \in \Sd \,,
\end{equation*}
it has been shown in \cite{AreHagHetKir18,FerFruRoc16} that
% \begin{subequations}
%   \label{eq:}
  \begin{align*}
    \Ei[\bfA] \,\text{ has helicity } \pm 1 
    & \qquad \text{if and only if} \qquad 
      \bfA \in V^\pm \,,\\
    \Esrho[\bfA]|_{\Rd\setminus\ol{\Drho}} \,\text{ has helicity } \pm 1 
    & \qquad \text{if and only if} \qquad 
      \FcalDrho \bfA \in V^\pm \,.
  \end{align*}
% \end{subequations}
Using the orthogonal projections $\Pcal^\pm : \LttSd \to \LttSd$ onto 
$V^\pm$, we define the projected far field operators
$\FcalDrho^{cd} := \Pcal^c \FcalDrho \Pcal^d$ for $c,d \in \{+,-\}$.
Then, $\FcalDrho^{cd}$ describes the components with helicity $c$ of
all possible electric far field patterns corresponding to electric
Herglotz incident waves with helicity $d$. 
Therewith, the far field operator $\FcalDrho$ can be decomposed into
four blocks 
\begin{equation}
  \label{eq:decomposition}
  \FcalDrho
  \,=\, \FcalDrho^{++} + \FcalDrho^{+-}
  + \FcalDrho^{-+} + \FcalDrho^{--} \,.
\end{equation}
The following notion of electromagnetic chirality has been
introduced in \cite{FerFruRoc16} (see also \cite{AreHagHetKir18}).
Roughly speaking, the idea is to decide whether the information
content of the electric far field patterns corresponding to all
possible electric incident fields with positive helicity can be
reproduced using all possible electric incident fields of negative
helicity and vice versa, or not. 

\begin{definition}
  \label{def:emChirality}
  A scatterer $\Drho$ is 
  \emph{electromagnetically achiral} (or \emph{em-achiral}) if there 
  exist unitary operators~$\Ucal^{(j)}:\LttSd\to\LttSd$ satisfying
  $\Ucal^{(j)} \Ccal = -\Ccal\Ucal^{(j)}$, $j=1,\ldots,4$, such that 
  \begin{equation*}
    \FcalDrho^{++} \,=\, \Ucal^{(1)} \FcalDrho^{--} \Ucal^{(2)} \,, \qquad 
    \FcalDrho^{-+} \,=\, \Ucal^{(3)} \FcalDrho^{+-} \Ucal^{(4)} \,.
  \end{equation*}
  If this is not the case, then the scatterer $\Drho$ is
  \emph{electromagnetically chiral} (or \emph{em-chiral}). 
\end{definition}

For $c,d \in \{+,-\}$ let $(\sigma_j^{cd})_{j\in\N}$ denote the
singular values of~$\FcalDrho^{cd}$ in decreasing order and repeated 
with multiplicity. 
Then, Definition~\ref{def:emChirality} says that a scattering
object $\Drho$ is em-achiral if and only if 
\begin{equation*}
  (\sigma_j^{++})_{j\in\N} \,=\, (\sigma_j^{--})_{j\in\N}
  \qquad\text{and}\qquad
  (\sigma_j^{+-})_{j\in\N} \,=\, (\sigma_j^{-+})_{j\in\N} \,.
\end{equation*}
Accordingly, it has been proposed in \cite{FerFruRoc16} to quantify the
degree of em-chirality of a scattering object by means of the distance
of the corresponding sequences of singular values.
In this work, we discuss two possible choices for such an
\emph{em-chirality measure} of a scatterer~$\Drho$ with associated far
field operator~$\FcalDrho$.  
The measure~$\chitwo$ from \cite{FerFruRoc16}, which is defined by 
\begin{subequations}
  \label{eq:ChiralityMeasures}
  \begin{equation}
    \label{eq:Chi2}
    \chitwo(\FcalDrho) 
    \,:=\, \Bigl( 
      \bigl\| (\sigma_j^{++})_{j\in\N}-(\sigma_j^{--})_{j\in\N} \bigr\|_{\ell^2}^2 
      +\bigl\|(\sigma_j^{+-})_{j\in\N}-(\sigma_j^{-+})_{j\in\N} \bigr\|_{\ell^2}^2
      \Bigr)^{\frac{1}{2}} \,,
    \end{equation}
    and a smooth relaxation $\chiHS$ of $\chitwo$ from \cite{Hag19},
    which is given by 
    \begin{equation}
      \label{eq:ChiHS}
      \chiHS(\FcalDrho) 
      \,:=\, \bigl( 
      \bigl( \|\FcalDrho^{++}\|_\HS
      - \|\FcalDrho^{--}\|_\HS \bigr)^2 
      + \bigl(\|\FcalDrho^{-+}\|_\HS
      - \|\FcalDrho^{+-}\|_\HS \bigr)^2 
      \bigr)^{\frac12} \,.
  \end{equation}
\end{subequations}
Here, $\|\ph\|_\HS$ denotes the Hilbert-Schmidt norm.
The Hilbert-Schmidt norm $\|\FcalDrho\|_\HS$ of the far field
operator~$\FcalDrho$ is sometimes called the
\emph{total interaction cross section} of the scattering object
$\Drho$. 
It has been shown in \cite{AreHagHetKir18,FerFruRoc16,Hag19} that
\begin{equation}
  \label{eq:ChiBounds}
  0
  \,\leq\, \chiHS(\FcalDrho) 
  \,\leq\, \chitwo(\FcalDrho)
  \,\leq\, \|\FcalDrho\|_\HS \,.
\end{equation}
For the lower and upper bounds in \eqref{eq:ChiBounds} we have that
% \begin{subequations}
%   \label{eq:}
  \begin{align*}
    \chiHS(\FcalDrho) 
    &\,=\, 0 
      \phantom{\| \FcalDrho \|_\HS}\;\quad\text{if and only if}\qquad 
      \chitwo(\FcalDrho) 
      \,=\, 0 \,,\\
    \chiHS(\FcalDrho) 
    &\,=\, \| \FcalDrho \|_\HS 
      \phantom{0}\;\quad\text{if and only if}\qquad 
      \chitwo(\FcalDrho) 
      \,=\, \| \FcalDrho \|_\HS \,.
  \end{align*}
% \end{subequations}
The scatterer $\Drho$ is em-achiral if and only if
$\chitwo(\FcalDrho)=\chiHS(\FcalDrho)=0$, and it is said to be
\emph{maximally em-chiral} if
$\chitwo(\FcalDrho)=\chiHS(\FcalDrho)=\|\FcalDrho\|_\HS$. 
The latter is equivalent to the fact that the scatterer $\Drho$ does
not scatter incident fields of either positive or negative helicity at
all.

%%%%%%%%%%%%%%%%%%%%%%%%%%%%%%%%%%%%%%%%%%%%%%%%%%%%%%%%%%%%%%%%%%%%%% 
\section{Optimal shape design}
\label{sec:ShapeOptimization}
%%%%%%%%%%%%%%%%%%%%%%%%%%%%%%%%%%%%%%%%%%%%%%%%%%%%%%%%%%%%%%%%%%%%%% 
We consider the shape optimization problem to design thin metallic
nanowires that are as close to being maximally em-chiral at optical
frequencies as possible. 
To this end, we maximize the normalized em-chirality measures
$\chitwo(\FcalDrho)/\|\FcalDrho\|_\HS$ and
$\chiHS(\FcalDrho)/\|\FcalDrho\|_\HS$, which are bounded between $0$
and $1$ by~\eqref{eq:ChiBounds}, with respect to the shape of $\Drho$.
We run the shape optimization at a fixed frequency $\fopt$, which in
turn determines the relative electric permittivity $\epsr$ of the
nanowire (see Table~\ref{tab:MaterialParameters} for silver and gold),
prior to the optimization process. 
We also choose the rescaled cross-section $B'=\Drho'/\rho$ as well as
the length $L$ of the nanowire in advance, and then we optimize the
shape of the spine curve $\Gamma$ and the geometry adapted
frame~$\Vptheta := [ \tp | \npada | \bpada ]$ that describes the
twisting of the cross-section of the nanowire along the spine curve. 
 
\subsection{Objective functionals and shape derivatives}
The computational complexity of the iterative shape optimization
scheme that we develop below is dominated by the evaluation of
electric far field operators and of shape derivatives of electric
far field operators corresponding to thin metallic nanowires in each
iteration step. 
Using the asymptotic representation formula~\eqref{eq:GenAsyEinfty},
the electric far field operator~$\FcalDrho$ associated to a thin
metallic nanowire~$\Drho$ can be approximated by the
operator~$\TcalDrho: \LttSd \to \LttSd$, 
\begin{equation}
  \label{eq:DefTcal}
  (\TcalDrho\bfA)(\xhat)
  \,:=\, |B'| (k\rho)^2 \int_\Gamma 
  (\epsr-1) \rme^{-\rmi k \xhat\cdot \bfy}
  \bigl((\xhat\times\I_3)\times\xhat\bigr)
  \Meps(\bfy) \Ei[\bfA](\bfy) \ds(\bfy) \,,
  \qquad \xhat\in\Sd \,.
\end{equation}
Theorem~\ref{thm:GenAsy} and the definition of the electric Herglotz
wave $\Ei[\bfA]$ in \eqref{eq:DefHerglotz} show that 
\begin{equation}
  \label{eq:FFExp}
  \FcalDrho
  \,=\, \TcalDrho + o\bigl((k\rho)^2\bigr)
  \qquad \text{as } \rho \to 0 \,,
\end{equation}
where the term $o((k\rho)^2)$ in \eqref{eq:FFExp} is such that
$\|o((k\rho)^2)\|_{\HS}/(k\rho)^2$ converges to zero.

Since we have fixed the rescaled cross-section $B' := \Drho'/\rho$,
the support $\Drho$ of a nanowire is uniquely determined by a
parametrization $\bfp$ of the spine curve $\Gamma$ and an associated 
geometry adapted frame $(\tp, \npada, \bpada)$ as in
\eqref{eq:DrhoAdapted}.  
Accordingly, we introduce a set of admissible parametrizations for
supports of thin nanowires by
\begin{multline}
  \label{eq:DefPcal}
  \Uad
  \,:=\, \bigl\{ \bigl( \bfp, \Vptheta \bigr)
  \in C^3([0,L],\Rd) \times C^2([0,L],\SOd) \;\big|\; \\
  \bfp([0,L]) \text{ is simple, } \bfp'(s) \neq 0 , 
  \text{ and } \Vptheta(s) \bfe_1 = \bfp'(s)/|\bfp'(s)|
  \text{ for all } s\in [0,L] \bigr\} \,.
\end{multline}
Here, $\bfe_1 := (1,0,0)^\trans$ denotes the first standard basis
vector in $\Rd$. 
Therewith, we define a nonlinear
operator~$\bfTrho : \Uad \to \text{HS}(\LttSd)$ that maps admissible
parametrizations of supports of thin nanowires~$\Drho$ to the leading
order term $\TcalDrho$ of the associated far field operator
$\FcalDrho$ in \eqref{eq:FFExp} by 
\begin{equation}
  \label{eq:TrhoPTheta}
  \bfTrho(\bfp,\Vptheta)
  \,:=\, \TcalDrho \,, \qquad \bigl( \bfp, \Vptheta \bigr) \in\Uad \,.
\end{equation}
The projected operators
$(\bfTrho(\bfp,\Vptheta))^{cd} := \Pcal^c\bfTrho(\bfp,\Vptheta)\Pcal^d$
for any combination of $c,d \in \{+,-\}$ yield a decomposition of
$\bfTrho(\bfp,\Vptheta)$ as in \eqref{eq:decomposition}.
Accordingly, we can apply the em-chirality measures from
\eqref{eq:ChiralityMeasures} directly to~$\bfTrho(\bfp,\Vptheta)$. 

We consider the two objective functionals
\begin{equation}
  \label{eq:J2}
  \Jtwo: \Uad \to [0,1] \,, \qquad 
  \Jtwo(\bfp,\Vptheta)
  \,:=\, \frac{\chitwo(\bfTrho(\bfp,\Vptheta))}
  {\| \bfTrho(\bfp,\Vptheta) \|_\HS} \,,
\end{equation}
and 
\begin{equation}
  \label{eq:JHS}
  \JHS: \Uad \to [0,1] \,, \qquad 
  \JHS(\bfp,\Vptheta)
  \,:=\, \frac{\chiHS(\bfTrho(\bfp,\Vptheta))}
  {\| \bfTrho(\bfp,\Vptheta) \|_\HS} \,.
\end{equation}
Both functionals $\Jtwo$ and $\JHS$ attain their maximum value $1$ for
a maximally em-chiral metallic nanowire, and their minimum
value $0$ for an em-achiral nanowire. 
Since~$\Jtwo$ is not differentiable and hence less suitable for a
gradient based optimization, we focus in the following on maximizing
the nonlinear functional~$\JHS$.
However, we will also specify the corresponding values of~$\Jtwo$
for comparison (e.g., with the results from \cite{GarEtAl21}) in the
numerical examples in Section~\ref{sec:NumericalResults} below. 

\begin{remark} \label{rem:rho}
  It is important to observe that for any $\bfA\in\LttSd$ the electric
  far field pattern $\TcalDrho\bfA$ from~\eqref{eq:DefTcal} is
  homogeneous with respect to the squared radius $\rho^2$ of the
  cross-section $\Drho'=\rho B'$ of the support of the thin nanowire.
  Thus, the same is true
  for~$\chitwo (\bfTrho(\bfp,\Vptheta))$, 
  $\chiHS (\bfTrho(\bfp,\Vptheta))$, 
  and~$\|\bfTrho(\bfp,\Vptheta)\|_\HS$ with~$(\bfp,\Vptheta)\in\Uad$.
  In particular, the rescaled em-chirality measures $\Jtwo(\bfp,\Vptheta)$
  and $\JHS(\bfp,\Vptheta)$ are independent of $\rho$.
  This means that the specific value of~$\rho$ does not
  affect the result of the optimization procedure considered below.
  However, in order for the leading order term $\TcalDrho$
  in~\eqref{eq:FFExp} to be an acceptable approximation of
  $\FcalDrho$, the radius $\rho$ of the cross section $\Drho'$ of the
  thin nanowire has to be sufficiently small, which we assume
  throughout this work.~\hfill$\lozenge$ 
\end{remark}

We discuss the optimization problem
\begin{equation}
  \label{eq:OptimizationProblem}
  \text{find} \qquad
  \argmin\limits_{(\bfp,\Vptheta) \in \Uad}
  \left(-\JHS(\bfp,\Vptheta)\right) \qquad
  \text{subject to } \quad |\Gamma| = L 
\end{equation}
at some prescribed frequency $\fopt$, for some fixed rescaled
cross-section $B'=\Drho'/\rho\tm B_1'(0)\tm\Rtwo$, and for some
prescribed length~$L>0$ of the nanowire. 
The frequency $\fopt$ determines the wave number $k$ and the
relative electric permittivity $\eps_r$. 
Therewith, the rescaled cross-section $B'$ defines the two-dimensional
electric polarization tensor $\meps\in\C^{2\times 2}$ according
to~\eqref{eq:mepsij}--\eqref{eq:wj}. 

It has been shown already in \cite{Hag19} that the em-chirality
measure $\chiHS$ from \eqref{eq:ChiralityMeasures}
is differentiable on 
\begin{equation*}
%   \label{eq:DefX}
  X 
  \,:=\, \bigl\{ \Gcal \in \HS(\LttSd)
  \;\big|\; \chiHS(\Gcal)\not=0 \text{ and }
  \|\Gcal^{cd}\|_\HS>0 \,,\,\, c,d \in \{+,-\} \bigr\} \,.
\end{equation*}
For any $\Gcal\in X$ and $\Hcal\in\HS(\LttSd)$ the derivative is given 
by 
\begin{equation*}
%   \label{eq:ChiHSPrime}
  (\chiHS)'[\Gcal]\Hcal 
  \,=\, 
  \frac{\real\langle\Gcal,\Hcal\rangle_\HS 
    - \sum_{c,d\in\{+,-\}} \real\langle\Gcal^{cd},\Hcal^{cd}\rangle_\HS
    \frac{\|\Gcal^{\ol{c}\ol{d}}\|_\HS}
    {\|\Gcal^{cd}\|_\HS}}{\chiHS(\Gcal)} \,,
\end{equation*}
where $\ol{c}:=-c$ and $\ol{d}:=-d$.
Accordingly, the Fr\'echet derivative of the objective functional
$\JHS$ satisfies 
\begin{equation*}
%   \label{eq:}
  \begin{split}
    \JHS'[\bfp,\Vptheta](\bfh,\phi)
    &\,=\,
    \frac{(\chiHS)'\bigl[\bfTrho(\bfp,\Vptheta)\bigr]
      (\bfTrho'[\bfp,\Vptheta](\bfh,\phi))}
    {\|\bfTrho(\bfp,\Vptheta)\|_\HS} \\
    &\phantom{\,=\,}
    - \frac{\chiHS\bigl(\bfTrho(\bfp,\Vptheta)\bigr) 
      \real \bigl\langle \bfTrho(\bfp,\Vptheta), 
      \bfTrho'[\bfp,\Vptheta](\bfh,\phi) \bigr\rangle_\HS}
    {\|\bfTrho(\bfp,\Vptheta)\|_\HS^3} \,,
  \end{split}
\end{equation*}
where $\bfTrho'[\bfp,\Vptheta]$ denotes the Fr\'echet derivative of the
operator $\bfTrho$ at $(\bfp,\Vptheta)\in\Uad$.
It remains to show that $\bfTrho$ is indeed Fr\'echet differentiable at
$(\bfp,\Vptheta)\in\Uad$ and to determine its Fr\'echet derivative. 

The admissible set $\Uad$ in \eqref{eq:DefPcal} is not a vector space,
but $\Uad$ can be parametrized locally around any admissible
$(\bfp,\Vptheta)\in\Uad$ with
$\Vptheta = \bigl[\tp\big|\npada\big|\bpada\big]$ as follows.
Suppose that~$\delta_\bfp,\delta_\theta>0$ are sufficiently small.
Then, an open neighborhood of $(\bfp,\Vptheta)$ in $\Uad$ is given by 
\begin{equation}
  \label{eq:UadLocPar}
  \bigl\{ \bigl( \bfp+\bfh, V_{\bfp+\bfh,\theta+\phi} \bigr)
  \;\big|\; \bfh \in C^3([0,L],\Rd) \,,\; \|\bfh\|_{C^3} < \delta_\bfp \,,\;
  \phi\in C^2([0,L],\R) \,,\; \|\phi\|_{C^2} < \delta_\theta \bigr\} \,,
\end{equation}
where $V_{\bfp+\bfh,\theta+\phi}
:= [ \bft_{\bfp+\bfh} | \bfn_{\bfp+\bfh,\theta+\phi}
| \bfb_{\bfp+\bfh,\theta+\phi} ] \in C^2([0,L],\SOd)$ with
\begin{subequations}
  \label{eq:FrameTranslated1}
  \begin{align}
    \bft_{\bfp+\bfh}
    &\,:=\, \frac{\bfp'+\bfh'}{|\bfp'+\bfh'|} \,,\\
    \bfn_{\bfp+\bfh,\theta+\phi}
    &\,:=\, (\tp\cdot\bft_{\bfp+\bfh}) \bfn_{\bfp,\theta+\phi}
      - \frac{ \bfb_{\bfp,\theta+\phi}\cdot\bft_{\bfp+\bfh}}
      {1+\bft_p\cdot\bft_{\bfp+\bfh}}
      (\bft_p\times\bft_{\bfp+\bfh})
      - ( \bfn_{\bfp,\theta+\phi}\cdot\bft_{\bfp+h}) \tp \,,\\
    \bfb_{\bfp+\bfh,\theta+\phi}
    &\,:=\, (\tp\cdot\bft_{\bfp+\bfh}) \bfb_{\bfp,\theta+\phi}
      + \frac{ \bfn_{\bfp,\theta+\phi}\cdot\bft_{\bfp+\bfh}}
      {1+\bft_p\cdot\bft_{\bfp+\bfh}}
      (\bft_p\times\bft_{\bfp+\bfh})
      - ( \bfb_{\bfp,\theta+\phi}\cdot\bft_{\bfp+h}) \tp \,,
  \end{align}
\end{subequations}
and
\begin{equation}
  \label{eq:FrameTranslated2}
  \bigl[\bfn_{\bfp,\theta+\phi} \big| \bfb_{\bfp,\theta+\phi}\bigr]
  \,:=\, \bigl[\cos(\phi) \bfn_{\bfp,\theta}
  + \sin(\phi) \bfb_{\bfp,\theta} \big|
  -\sin(\phi) \bfn_{\bfp,\theta}
  + \cos(\phi) \bfb_{\bfp,\theta} \bigr] \,.
\end{equation}
We note that 
$(\bft_{\bfp+\bfh},\bfn_{\bfp+\bfh,\theta+\phi},\bfb_{\bfp+\bfh,\theta+\phi})$
is an orthogonal frame along the perturbed curve $\bfp+\bfh$ by
construction. 

Before we establish the Fr\'echet derivative of $\bfTrho$ in
Theorem~\ref{thm:FrechetDerivativeTrho} below, we discuss the
shape derivative of the polarization tensor $\Meps$ using the explicit
representation in \eqref{eq:PolTenExplicit}. 

\begin{lemma}
  \label{lmm:FrechetDerivativeM}
  The mapping $\Meps: \Uad \to C([0,L],\Cdd)$ defined by
  $\Meps(\bfp,\Vptheta) 
  := \Meps_{\bfp,\Vptheta}
  := \Vptheta \Meps\Vptheta^\trans$
  is Fr\'echet differentiable.
  Its Fr\'echet derivative at $(\bfp,\Vptheta) \in \Uad$ with
  respect to the local parametrization of~$\Uad$ in
  \eqref{eq:UadLocPar}--\eqref{eq:FrameTranslated2} is given
  by~$(\Meps_{\bfp,\Vptheta})': C^3([0,L],\Rd)\times C^2([0,L],\R)
  \to C([0,L],\Cdd)$ with
  \begin{equation}
    \label{eq:ShapeDerivativePolTen}
    (\Meps_{\bfp,\Vptheta})'(\bfh,\phi)
    \,=\, V'_{\bfp,\theta}(\bfh,\phi) \Meps \Vptheta^\top
    + \Vptheta \Meps (V'_{\bfp,\theta}(\bfh,\phi))^\top \,,
  \end{equation}
  where the matrix-valued function $V'_{\bfp,\theta}(\bfh,\phi)$
  satisfies 
  \begin{equation*}
%     \label{eq:ShapeDerivativeFrame}
      V'_{\bfp,\theta}(\bfh,\phi)
      \,=\, \biggl[
      \frac{\bfh'\cdot\npada}{|\bfp'|} \npada
      + \frac{\bfh'\cdot \bpada}{|\bfp'|} \bpada \,\bigg|
      - \frac{\bfh' \cdot \npada}{|\bfp'|} \tp
      + \phi\, \bpada \,\bigg|
      - \frac{\bfh'\cdot \bpada}{|\bfp'|} \tp
      - \phi\, \npada \biggr] \,.
  \end{equation*}
\end{lemma}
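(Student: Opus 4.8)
The plan is to exploit that the matrix $\Meps$ appearing on the right of the definition is the \emph{constant} matrix from \eqref{eq:PolTenExplicit2}, depending only on the fixed $\epsr$ and the fixed rescaled cross-section $B'$. Hence, in the local chart \eqref{eq:UadLocPar}--\eqref{eq:FrameTranslated2}, the map $\Meps_{\bfp,\Vptheta}$ is the composition of the frame map $(\bfh,\phi)\mapsto V_{\bfp+\bfh,\theta+\phi}$ with the quadratic matrix map $V\mapsto V\Meps V^\trans$. The latter is a bounded quadratic map on $C([0,L],\Rdd)$, hence smooth, with Fr\'echet derivative $H\mapsto H\Meps V^\trans+V\Meps H^\trans$. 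By the chain rule, formula \eqref{eq:ShapeDerivativePolTen} follows at once as soon as the frame map is known to be Fr\'echet differentiable at $(\bfh,\phi)=(0,0)$ with derivative $V'_{\bfp,\theta}(\bfh,\phi)$. Thus the whole content of the lemma reduces to computing $V'_{\bfp,\theta}(\bfh,\phi):=\frac{d}{dt}\big|_{t=0}V_{\bfp+t\bfh,\theta+t\phi}$ from \eqref{eq:FrameTranslated1}--\eqref{eq:FrameTranslated2}, and to upgrading this directional derivative to a genuine Fr\'echet derivative.

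I would compute $V'_{\bfp,\theta}(\bfh,\phi)$ column by column, repeatedly using the orthonormality of $(\tp,\npada,\bpada)$. For the first column, differentiating the normalized tangent $\bft_{\bfp+t\bfh}=(\bfp'+t\bfh')/|\bfp'+t\bfh'|$ gives $|\bfp'|^{-1}(\bfh'-(\bfh'\cdot\tp)\tp)$, the orthogonal projection of $\bfh'/|\bfp'|$ onto the normal plane; expanding $\bfh'$ in the frame yields exactly the first column $\frac{\bfh'\cdot\npada}{|\bfp'|}\npada+\frac{\bfh'\cdot\bpada}{|\bfp'|}\bpada$. For the remaining two columns I would first record, from \eqref{eq:FrameTranslated2}, the elementary derivatives $\frac{d}{dt}\big|_{t=0}\bfn_{\bfp,\theta+t\phi}=\phi\,\bpada$ and $\frac{d}{dt}\big|_{t=0}\bfb_{\bfp,\theta+t\phi}=-\phi\,\npada$, and then differentiate the three summands of \eqref{eq:FrameTranslated1}, evaluating at $t=0$ where $\bft_{\bfp+t\bfh}=\tp$, $\tp\cdot\bft_{\bfp+t\bfh}=1$ and $\npada\cdot\tp=\bpada\cdot\tp=0$.

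The one subtlety, and the only place I expect any obstacle, is the middle cross-product summand $\mp\frac{(\bfb_{\bfp,\theta+\phi}\text{ or }\bfn_{\bfp,\theta+\phi})\cdot\bft_{\bfp+\bfh}}{1+\tp\cdot\bft_{\bfp+\bfh}}(\tp\times\bft_{\bfp+\bfh})$. At $t=0$ \emph{both} of its factors vanish---the scalar coefficient because $\bpada\cdot\tp=\npada\cdot\tp=0$, and the vector factor because $\tp\times\tp=0$---so by the product rule its first-order derivative is identically zero and it contributes nothing. The surviving contributions come from the first summand (giving $\phi\,\bpada$, respectively $-\phi\,\npada$, since $\tp\cdot\frac{d}{dt}\big|_{t=0}\bft_{\bfp+t\bfh}=0$) and from the last summand (giving $-\frac{\bfh'\cdot\npada}{|\bfp'|}\tp$, respectively $-\frac{\bfh'\cdot\bpada}{|\bfp'|}\tp$, using $\npada\cdot\frac{d}{dt}\big|_{t=0}\bft_{\bfp+t\bfh}=\frac{\bfh'\cdot\npada}{|\bfp'|}$ and the analogue for $\bpada$). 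These assemble precisely into the claimed second and third columns of $V'_{\bfp,\theta}(\bfh,\phi)$.

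Finally, to promote this directional computation to Fr\'echet differentiability I would observe that for $\|\bfh\|_{C^3}$ small the denominators $|\bfp'+\bfh'|$ and $1+\tp\cdot\bft_{\bfp+\bfh}$ stay uniformly bounded away from zero (the latter because $\bft_{\bfp+\bfh}\to\tp$ uniformly as $\bfh\to0$ in $C^3$), so the frame map is built from normalizations, products and cross products that are smooth between the relevant $C^k$-spaces and is therefore continuously Fr\'echet differentiable. Since the resulting $V'_{\bfp,\theta}(\bfh,\phi)$ depends only on $\bfh'$ and $\phi$, it is manifestly a bounded linear operator $C^3([0,L],\Rd)\times C^2([0,L],\R)\to C([0,L],\Rdd)$, which identifies it as the sought Fr\'echet derivative and, via the first paragraph, completes the proof.
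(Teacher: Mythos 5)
Your proposal is correct and follows essentially the same route as the paper: both differentiate the explicit frame-update formulas \eqref{eq:FrameTranslated1}--\eqref{eq:FrameTranslated2}, observe that the cross-product summand is of second order because both of its factors vanish at the base point, and then obtain \eqref{eq:ShapeDerivativePolTen} from the product rule applied to $V\mapsto V\Meps V^\trans$ with the constant matrix $\Meps$. The only difference is organizational — the paper records explicit $O(\|\bfh\|^2_{C^2})$ and $O(\|\phi\|^2_{C})$ Taylor remainders and splits $V'_{\bfp,\theta}$ into partial derivatives in $\bfp$ and $\theta$, whereas you compute the directional derivative and upgrade it via smoothness of the constituent operations; both are sound.
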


\begin{proof}
  Using Taylor's theorem we find that
  \begin{equation}
    \label{eq:Taylortph}
    \bft_{\bfp+\bfh,\theta}
    \,=\, \bft_{\bfp,\theta} + \frac{1}{|\bfp'|} \bigl(
    (\bfh'\cdot\bfn_{\bfp,\theta})\bfn_{\bfp,\theta}
    + (\bfh'\cdot\bfb_{\bfp,\theta})\bfb_{\bfp,\theta}
    \bigr) + O(\|\bfh\|^2_{C^2([0,L],\Rd)}) 
  \end{equation}
  and
%   \begin{subequations}
    \begin{align*}
      \bfn_{\bfp,\theta+\phi}
      \,=\, \bfn_{\bfp,\theta} + \phi \bfp_{\bfp,\theta}
      + O(\|\phi\|^2_{C([0,L],\Rd)}) \,,\\
      \bfb_{\bfp,\theta+\phi}
      \,=\, \bfb_{\bfp,\theta} - \phi \bfn_{\bfp,\theta}
      + O(\|\phi\|^2_{C([0,L],\Rd)}) \,.
    \end{align*}
%   \end{subequations}
  Furthermore, substituting \eqref{eq:Taylortph} into
  \eqref{eq:FrameTranslated1} gives 
  \begin{align*}
    \bfn_{\bfp+\bfh,\theta}
    &\,=\, \bfn_{\bfp,\theta} - \frac{1}{|\bfp'|}
      (\bfh'\cdot\bfn_{\bfp,\theta})\bft_{\bfp,\theta}
      + O(\|\bfh\|^2_{C^2([0,L],\Rd)}) \,,\\
    \bfb_{\bfp+\bfh,\theta}
    &\,=\, \bfb_{\bfp,\theta} - \frac{1}{|\bfp'|}
      (\bfh'\cdot\bfb_{\bfp,\theta})\bft_{\bfp,\theta}
      + O(\|\bfh\|^2_{C^2([0,L],\Rd)}) \,.
  \end{align*}
  Accordingly, the partial derivatives
%   \begin{subequations}
    \begin{align*}
      \di_\bfp \Vptheta(\bfh)
      &\,=\, \frac{1}{|\bfp'|} \bigl[
        (\bfh'\cdot\npada)\npada + (\bfh'\cdot \bpada)\bpada \,\big|
        -(\bfh' \cdot \npada)\tp \,\big|
        -(\bfh'\cdot \bpada)\tp \bigr] \,, \\
      \di_\theta \Vptheta(\phi)
      &\,=\, \phi\, \bigl[ 0 \,\big|\, \bpada \,\big| - \npada \bigr]
    \end{align*}
%   \end{subequations}
  satisfy
%   \begin{subequations}
    \begin{align*}
      \| V_{\bfp+\bfh,\theta}-\Vptheta
      -\di_\bfp \Vptheta(\bfh)\|_{C([0,L],\Rdd)}
      &\,\leq\, C \|\bfh\|^2_{C^2([0,L],\Rd)} \,, \\
      \| V_{\bfp,\theta+\phi}-\Vptheta
      -\di_\theta \Vptheta(\phi)\|_{C([0,L],\Rdd)}
      &\,\leq\, C \|\phi\|^2_{C([0,L],\Rd)} \,. 
    \end{align*}
%   \end{subequations}
  Thus, using Taylor's theorem once more, we obtain that there exists
  $\delta\in[0,1]$ such that 
  \begin{equation*}
%     \label{eq:}
    \begin{split}
      &\| V_{\bfp+\bfh,\theta+\phi}-\Vptheta
      - V'_{\bfp,\theta}(\bfh,\phi)\|_{C([0,L],\Rdd)}\\
      &\,\leq\, \| V_{\bfp+\bfh,\theta+\phi}-V_{\bfp+\bfh,\theta}
      - \di_\theta \Vptheta(\phi)\|_{C([0,L],\Rdd)}
      + \| V_{\bfp+\bfh,\theta}-\Vptheta
      -\di_\bfp \Vptheta(\bfh)\|_{C([0,L],\Rdd)}\\
      &\,=\, \| \di_\theta V_{\bfp+\bfh,\theta+\delta\phi}(\phi)
      - \di_\theta \Vptheta(\phi)\|_{C([0,L],\Rdd)}
      + \| V_{\bfp+\bfh,\theta}-\Vptheta
      - \di_\bfp \Vptheta(\bfh)\|_{C([0,L],\Rdd)} \,.
    \end{split}
  \end{equation*}
  The continuity of $\di_\theta \Vptheta(\phi)$ with respect
  to $\bfp$ and $\theta$ shows that $\Vptheta$ is Fr\'echet
  differentiable with derivative~$V'_{\bfp,\theta}$.
  The Fr\'echet differentiability of the polarization tensor and
  \eqref{eq:ShapeDerivativePolTen} are now a consequence of the
  product rule. 
\end{proof}

In the next theorem we establish the Fr\'echet derivative of
$\bfTrho$ at $(\bfp,\Vptheta) \in \Uad$.

\begin{theorem}
  \label{thm:FrechetDerivativeTrho}
  The nonlinear operator $\bfTrho$ from \eqref{eq:TrhoPTheta} is Fr\'echet
  differentiable from $\Uad$ to $\HS(\LttSd)$.
  Its Fr\'echet derivative at
  $(\bfp,\Vptheta) \in \Uad$ with respect to the local parametrization
  of $\Uad$ in \eqref{eq:UadLocPar}--\eqref{eq:FrameTranslated2} is
  given by 
  $\bfTrho'[\bfp,\Vptheta]:\, C^3([0,L],\Rd) \times C^2([0,L],\R)
  \to \HS(\LttSd)$ with
  \begin{equation*}
%     \label{eq:DerivativeTrho}
    \bfTrho'[\bfp,\Vptheta] (\bfh,\phi)
    \,=\, |B'| (k\rho)^2 
    (\epsr-1) \sum_{j=1}^4 \bfT_{\rho,j}'[\bfp,\Vptheta] (\bfh,\phi) \,, 
  \end{equation*}
  where, for any $\bfA\in\LttSd$, 
%   \begin{subequations}
%     \label{eq:DerivativeTrhoPartEps}
    \begin{align*}
      \bigl(\bigl(\bfT_{\rho,1}'
      [\bfp,\Vptheta] (\bfh,\phi)\bigr)\bfA\bigr) (\xhat)
      &\,=\, - \int_0^L 
        \rmi k (\xhat\cdot\bfh) \rme^{-\rmi k \xhat\cdot\bfp}
        \left((\xhat\times\I_3)\times\xhat\right)
        \M_{\bfp,\theta}^\eps
        \Ei[\bfA](\bfp)
        |\bfp'| \dt \,,\\ 
      \bigl(\bigl(\bfT_{\rho,2}'
      [\bfp,\Vptheta] (\bfh,\phi)\bigr)\bfA\bigr) (\xhat)
      &\,=\, \int_0^L
        \rme^{-\rmi k \xhat\cdot\bfp}
        ((\xhat\times\I_3)\times\xhat)
        (\Meps_{\bfp,\Vptheta})'(\bfh,\phi)
        \Ei[\bfA](\bfp) 
        |\bfp'| \dt \,,\\ 
      \bigl(\bigl(\bfT_{\rho,3}'
      [\bfp,\Vptheta] (\bfh,\phi)\bigr)\bfA\bigr) (\xhat)
      &\,=\, \int_0^L 
        \rme^{-\rmi k \xhat\cdot\bfp}
        \left((\xhat\times\I_3)\times\xhat\right) \,
        \M_{\bfp,\theta}^\eps \,
        \bigl(\Ei[\bfA]\bigr)'[\bfp,\Vptheta] (\bfh,\phi) \,
        |\bfp'| \dt \,,\\ 
      \bigl(\bigl(\bfT_{\rho,4}'
      [\bfp,\Vptheta] (\bfh,\phi)\bigr)\bfA\bigr) (\xhat)
      &\,=\, \int_0^L 
        \rme^{-\rmi k \xhat\cdot\bfp}
        \left((\xhat\times\I_3)\times\xhat\right) \,
        \M_{\bfp,\theta}^\eps \,
        \Ei[\bfA](\bfp) \,
        \frac{\bfp'\cdot \bfh'}{|\bfp'|} \dt \,.
    \end{align*}
%   \end{subequations}
\end{theorem}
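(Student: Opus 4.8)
The plan is to exploit that, once $\Gamma$ is parametrized by $\bfp$, the operator $\bfTrho(\bfp,\Vptheta)=\TcalDrho$ from \eqref{eq:DefTcal}--\eqref{eq:TrhoPTheta} is an integral operator on $\LttSd$ whose action on a density $\bfA$ reads, with $\Ei[\bfA]$ as in \eqref{eq:DefHerglotz},
\begin{equation*}
  (\bfTrho(\bfp,\Vptheta)\bfA)(\xhat)
  \,=\, |B'|(k\rho)^2 \int_0^L (\epsr-1)\,
  \rme^{-\rmi k\xhat\cdot\bfp}
  \bigl((\xhat\times\I_3)\times\xhat\bigr)
  \Meps_{\bfp,\Vptheta}\,\Ei[\bfA](\bfp)\,|\bfp'| \dt \,.
\end{equation*}
The integrand depends on $(\bfp,\Vptheta)$ through exactly four factors: the phase $\rme^{-\rmi k\xhat\cdot\bfp}$, the polarization tensor $\Meps_{\bfp,\Vptheta}=\Vptheta\Meps\Vptheta^\trans$, the Herglotz wave restricted to the curve $\Ei[\bfA](\bfp)$, and the arc-length element $|\bfp'|$. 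The four operators $\bfT_{\rho,j}'$ will arise, one each, from differentiating a single factor while freezing the others, so the formula is essentially the product rule applied under the integral sign, with the polarization-tensor factor supplied by Lemma~\ref{lmm:FrechetDerivativeM}.

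Concretely, I would compute the elementary Gateaux derivatives of the remaining three factors along the local parametrization \eqref{eq:UadLocPar}--\eqref{eq:FrameTranslated2}. The chain rule gives $\frac{\dif}{\dif\tau}\big|_{\tau=0}\rme^{-\rmi k\xhat\cdot(\bfp+\tau\bfh)} = -\rmi k(\xhat\cdot\bfh)\rme^{-\rmi k\xhat\cdot\bfp}$, producing $\bfT_{\rho,1}'$; differentiating the composition $\tau\mapsto\Ei[\bfA](\bfp+\tau\bfh)$ yields $(\Ei[\bfA])'[\bfp,\Vptheta](\bfh,\phi)=\grad\Ei[\bfA](\bfp)\,\bfh$ (which is independent of $\phi$ since $\Ei[\bfA]$ is a fixed smooth field), whence $\bfT_{\rho,3}'$; and $\frac{\dif}{\dif\tau}\big|_{\tau=0}|\bfp'+\tau\bfh'| = (\bfp'\cdot\bfh')/|\bfp'|$ gives $\bfT_{\rho,4}'$. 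The term $\bfT_{\rho,2}'$ is exactly the contribution $(\Meps_{\bfp,\Vptheta})'(\bfh,\phi)$ from Lemma~\ref{lmm:FrechetDerivativeM}. Summing the four contributions and pulling out the common prefactor $|B'|(k\rho)^2(\epsr-1)$ reproduces the asserted formula.

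The bulk of the work, and the main obstacle, is upgrading these formal pointwise derivatives to a genuine Fr\'echet derivative in the Hilbert--Schmidt norm, uniformly in the density $\bfA$. I would write $\bfTrho(\bfp,\Vptheta)$ as an integral operator with matrix-valued kernel
\begin{equation*}
  K_{\bfp,\Vptheta}(\xhat,\bftheta)
  \,=\, |B'|(k\rho)^2\int_0^L (\epsr-1)\,
  \rme^{-\rmi k\xhat\cdot\bfp}
  \bigl((\xhat\times\I_3)\times\xhat\bigr)
  \Meps_{\bfp,\Vptheta}\,
  \rme^{\rmi k\bftheta\cdot\bfp}\,|\bfp'| \dt \,,
\end{equation*}
so that $\|\bfTrho(\bfp,\Vptheta)\|_\HS$ is the $L^2(\Sd\times\Sd)$ norm of $K_{\bfp,\Vptheta}$. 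Applying Taylor's theorem with remainder to each of the four factors --- using the $C^3$-regularity of $\bfp$, the smoothness of the prefactor $(\xhat\times\I_3)\times\xhat$ on the compact set $\Sd$, the boundedness on $\Sd$ of the extra factor $\rmi k\bftheta$ appearing in $\grad\Ei[\bfA](\bfp)$, and the $O(\|(\bfh,\phi)\|^2)$ bound in $C([0,L],\Cdd)$ for the polarization tensor from Lemma~\ref{lmm:FrechetDerivativeM} --- yields a kernel remainder that is $O(\|(\bfh,\phi)\|^2)$ in $C(\Sd\times\Sd,\Cdd)$, hence in $L^2(\Sd\times\Sd)$ and therefore in $\|\ph\|_\HS$. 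This simultaneously shows that $\bfTrho'[\bfp,\Vptheta]$ is a bounded linear map into $\HS(\LttSd)$ and that the remainder is $o(\|(\bfh,\phi)\|)$, which is precisely Fr\'echet differentiability. The crux is thus the uniform control of all remainders over $(\xhat,\bftheta)\in\Sd\times\Sd$; the smooth-kernel structure, the compactness of $\Sd$, and the bounds from Lemma~\ref{lmm:FrechetDerivativeM} are exactly what make the passage from pointwise to Hilbert--Schmidt estimates go through.
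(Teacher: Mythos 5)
Your proposal is correct and follows essentially the same route as the paper: the paper's own proof is a one-line reduction to Lemma~\ref{lmm:FrechetDerivativeM} together with a citation of the analogous result for dielectric nanowires in \cite[Thm.~4.2]{AreGriKno21}, and the argument behind that reduction is exactly your product rule applied to the four $(\bfp,\Vptheta)$-dependent factors of the integrand, with the smooth-kernel/Hilbert--Schmidt estimate supplying the uniform remainder control. You have simply written out the details the paper delegates to the reference, and your handling of the two genuinely new ingredients (the polarization-tensor derivative and the twist variable $\phi$) matches the paper's.
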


\begin{proof}
  Using Lemma~\ref{lmm:FrechetDerivativeM}, this result can be shown as
  in \cite[Thm.~4.2]{AreGriKno21}, where the Fr\'echet
  differentiability of~$\bfTrho$ for thin dielectric nanowires with
  circular cross-sections has been established. 
\end{proof}

\begin{remark}
  \label{rem:DiscretisationT}
  For the numerical implementation of the operator
  $\bfTrho(\bfp,\Vptheta)$ and of its Fr\'echet derivative
  $\bfTrho'[\bfp,\Vptheta]$, we use truncated series representations of 
  these operators with respect to a suitable complete orthonormal
  system in $\LttSd$. 
  Denoting by $\Umn$, $\Vmn$, $m=-n,\ldots,n$, vector spherical
  harmonics of order $n>0$ (see, e.g., \cite[p.~248]{ColKre19}), 
  we define the \emph{circularly polarized vector spherical harmonics}
  of order $n$ by 
  \begin{equation*}
%     \label{eq:DefAmnBmn}
    \Amn
    \,:=\, \frac{1}{\sqrt2} (\Umn+\rmi\Vmn) 
    \qquad\text{and}\qquad
    \Bmn
    \,:=\, \frac{1}{\sqrt2} (\Umn-\rmi\Vmn) \,.
  \end{equation*}
  Then $\Amn$ and $\Bmn$, $m=-n,\dots, n$, $n=1,2,\dots$, form a
  complete orthonormal system of the subspaces~$\Vplus$ and $\Vminus$
  from~\eqref{eq:DefVpm}, respectively.
  The main reason for using this orthonormal basis is that the
  orthogonal projections $\Pcal^\pm$ onto $V^\pm$, required
  to evaluate the em-chirality measure from \eqref{eq:ChiralityMeasures} in
  the shape optimization scheme, can be obtained without any further
  computation directly from corresponding series expansions. 
  
  In Lemma~3.2 and Remark~4.3 of \cite{AreGriKno21}, formulas for the
  coefficients in the series representations
  of~$\bfTrho(\bfp,\Vptheta)$ and of its Fr\'echet derivative
  $\bfTrho'[\bfp,\Vptheta](\bfh,\phi)$ in terms of these circularly
  polarized vector spherical harmonics have been developed for the
  corresponding operators associated to thin dielectric nanowires with
  circular cross-sections. 
  Observing that the different material properties and the more
  general twisting cross-sections considered in this work only affect
  the electric polarization tensor and its Fr\'echet derivative in
  $\bfTrho(\bfp,\Vptheta)$ and $\bfTrho'[\bfp,\Vptheta](\bfh,\phi)$, 
  the results from \cite{AreGriKno21} can immediately be applied to
  obtain formulas for the coefficients in the series representations
  of the operators considered in the present work as well.
  
  In the numerical implementation these series expansions have to be
  truncated at some maximal degree~${n=N\in\N}$. 
  The analysis from \cite{GriSyl18} suggests to choose the truncation
  index $N$ such that $N\gtrsim kR$, where~$\BR$ denotes the smallest
  ball centered at the origin that circumscribes the
  nanowire~$\Drho$. 
  Accordingly, we consider discrete approximations
  $\bfTrhoN(\bfp,\Vptheta) \in \C^{Q\times Q}$ and 
  $\bfTrhoN'[\bfp,\Vptheta] (\bfh,\phi)\in \C^{Q\times Q}$ with~$Q =
  2N(N+2)$.~\hfill$\lozenge$ 
\end{remark}

\subsection{Regularization and numerical implementation}
In the numerical optimization scheme we will consider spine curves
$\Gamma$ that are parametrized by three-dimensional cubic not-a-knot
splines ${\bfp:[0,L]\to \Rd}$ with respect to a partition 
\begin{equation*}
%   \label{eq:DefGrid}
  \tri
  \,=\, \{ 0 = t_1 < t_2 < \dots < t_n = L \} \subset [0,L] \,.
\end{equation*}
Throughout, we denote by $\Stri$ and $(\Stri)^3$ the space of
one-dimensional and three-dimensional cubic not-a-knot splines with
respect to this partition, respectively. 

To evaluate local minimizers of the constrained optimization problem
\eqref{eq:OptimizationProblem}, we approximate the latter by an
unconstrained optimization problem, where we include the length
constraint $|\Gamma|=L$ via the penalty term
\begin{equation}
  \label{eq:DefPsi1}
  \Psi_1: \Uad \to \R \,, \qquad
  \Psi_1(\bfp,\Vptheta)
  \,:=\, \sum_{j=1}^{n-1} \left| \frac{1}{n-1}
    - \frac{1}{L} \int_{t_j}^{t_{j+1}}|\bfp'(t)| \dt \right|^2 \,.
\end{equation}
Besides enforcing the length constraint, this term will also
promote uniformly distributed nodes along the spline representing the
spine curve during the optimization process. 

We use two further regularization terms to stabilize the optimization.
The functional
\begin{equation}
  \label{eq:DefPsi2}
  \Psi_2: \Uad \to \R \,, \qquad
  \Psi_2(\bfp,\Vptheta)
  \,:=\, \frac{1}{L} \int_0^L \kappa^2(t) |\bfp'(t)| \dt \,,
\end{equation}
where $\kappa$ is the curvature of $\Gamma$ parametrized by $\bfp$
from \eqref{eq:DefCurvature}, prevents the optimal nanowire from
being too strongly entangled.
Similarly, the term
\begin{equation}
  \label{eq:DefPsi3}
  \Psi_3: \Uad \to \R \,, \qquad
  \Psi_3(\bfp,\Vptheta) 
  \,:=\, \frac{1}{L} \int_0^L \beta^2(t) |\bfp'(t)| \dt \,,
\end{equation}
where $\beta$ is the twist rate of the geometry adapted frame
$(\tp,\npada,\bpada)$ parametrized by $(\bfp,\Vptheta)$ from
\eqref{eq:TwistRate}, penalizes strong twisting of the cross-section
of the optimal nanowire along its spine curve.

Adding $\alpha_1\Psi_1$, $\alpha_2\Psi_2$, and $\alpha_3\Psi_3$ with
some suitable regularization parameters $\alpha_1,\alpha_2,\alpha_3>0$
to the objective functional in~\eqref{eq:OptimizationProblem}, we
obtain the regularized objective functional $\Phi: \Uad \to \R$ given by
\begin{equation}
  \label{eq:DefPhi}
  \Phi(\bfp,\Vptheta)
  \,:=\, -\JHS(\bfp,\Vptheta)
  + \alpha_1 \Psi_1(\bfp,\Vptheta)
  + \alpha_2 \Psi_2(\bfp,\Vptheta)
  + \alpha_3 \Psi_3(\bfp,\Vptheta) \,.
\end{equation}
Accordingly, we consider the unconstrained optimization problem
\begin{equation}
  \label{eq:UnconstrainedOptimizationProblem}
  \text{find} \qquad
  \argmin\limits_{(\bfp,\Vptheta) \in \Uad}
  \Phi(\bfp,\Vptheta) .
\end{equation}

Below we apply a quasi-Newton scheme to solve a finite dimensional
approximation of \eqref{eq:UnconstrainedOptimizationProblem}
numerically.
In the next lemma we collect the Fr\'echet derivatives of the penalty
terms $\Psi_1$, $\Psi_2$, and $\Psi_3$, which are required by this
algorithm. 

\begin{lemma} 
  \label{lmm:DerivativePsi1Psi2}
  The penalty terms $\Psi_1$, $\Psi_2$, and $\Psi_3$ from
  \eqref{eq:DefPsi1}--\eqref{eq:DefPsi3} are Fr\'echet differentiable. 
  Their Fr\'echet derivatives at~$(\bfp,\Vptheta)\in\Uad$ with respect
  to the local parametrization of $\Uad$ in 
  \eqref{eq:UadLocPar}--\eqref{eq:FrameTranslated2} are given
  by~$\Psi_1'[\bfp,\Vptheta]: C^3([0,L],\Rd)\times C^2([0,L],\R)\to\R$
  with 
  \begin{equation*}
%     \label{eq:Psi1prime}
    \Psi_1'[\bfp,\Vptheta](\bfh,\phi)
    \,=\, \di_\bfp \Psi_1[\bfp,\Vptheta](\bfh)
    \,=\, -2 \sum_{j=1}^{n-1} \biggl(
    \int_{t_j}^{t_{j+1}} \frac{\bfp' \cdot \bfh'}{|\bfp'|}\dt
    \biggr) 
    \biggl( \frac{1}{n-1}
    - \frac{1}{L} \int_{t_j}^{t_{j+1}}|\bfp'| \dt \biggr) \,,
  \end{equation*}
  by $\Psi_2'[\bfp,\Vptheta]: C^3([0,L],\Rd)\times C^2([0,L],\R)\to\R$
  with
  \begin{equation*}
%     \label{eq:Psi2prime}
    \begin{split}
      \Psi_2'[\bfp,\Vptheta](\bfh,\phi)
      &\,=\, \di_\bfp \Psi_2[\bfp,\Vptheta](\bfh) \\ 
      &\,=\, \frac{1}{L} \int_0^L \bigg( 
      2 \frac{\bfp''\cdot \bfh''}{|\bfp'|^3} 
      -3 \frac{|\bfp''|^2 (\bfp'\cdot\bfh')}{|\bfp'|^5} \\
      &\phantom{\,=\,\int_{0}^{1} \! \bigg(}
      -2 \frac{\bigl(\bfp'\cdot \bfh'' 
        + \bfp''\cdot \bfh' \bigr) (\bfp'\cdot\bfp'')}
      {|\bfp'|^5}
      + 5 \frac{(\bfp'\cdot \bfh')(\bfp'\cdot \bfp'')^2}
      {|\bfp'|^7} \bigg)
      \dt \,,
    \end{split}
  \end{equation*}
  and by
  $\Psi_3'[\bfp,\Vptheta]: C^3([0,L],\Rd)\times C^2([0,L],\R)\to\R$
  with
%   \begin{subequations}
    \begin{equation*}
%       \label{eq:Psi3prime}
      \Psi_3'[\bfp,\Vptheta](\bfh,\phi)
      \,=\, \di_\bfp \Psi_3[\bfp,\Vptheta](\bfh)
      + \di_\theta \Psi_3[\bfp,\Vptheta](\phi) \,,
    \end{equation*}
    where
    \begin{align*}
      \di_\bfp \Psi_3[\bfp,\Vptheta](\bfh)
      &\,=\, \frac{1}{L} \int_0^L \biggl(
        2(\npada'\cdot\bpada)
        \bigl( -(\bfh'\cdot\npada)(\tp'\cdot\bpada)
        + (\bfh'\cdot\bpada)(\tp'\cdot\npada) \bigr) |\bfp'| \\
      &\phantom{\,=\, \frac{1}{L} \int_0^L \biggl(}
        + (\npada'\cdot\bpada)^2 \, \frac{\bfp'\cdot\bfh'}{|\bfp'|}
        \biggr) \dt \,,\\
      \di_\theta \Psi_3[\bfp,\Vptheta](\phi)
      &\,=\, \frac{1}{L} \int_0^L 
        2(\npada'\cdot\bpada) \phi' |\bfp'| \dt \,.
    \end{align*}
%   \end{subequations}
\end{lemma}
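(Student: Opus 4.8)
The plan is to exploit that $\Psi_1$ and $\Psi_2$ depend on an admissible pair $(\bfp,\Vptheta)$ only through $\bfp$ (via $\bfp'$ and $\bfp''$), so that their derivatives in the frame direction vanish and the formulas reduce to the $\di_\bfp$-part. The single elementary ingredient is the pointwise speed derivative $\di_\bfp|\bfp'|(\bfh)=(\bfp'\cdot\bfh')/|\bfp'|$, obtained by differentiating $\epsilon\mapsto|\bfp'+\epsilon\bfh'|$ at $\epsilon=0$. For $\Psi_1$ I would apply the chain rule to the finite sum of squares: with $I_j:=\tfrac{1}{n-1}-\tfrac1L\int_{t_j}^{t_{j+1}}|\bfp'|\dt$ each summand is $I_j^2$, and differentiating $I_j$ under the integral sign with the speed derivative gives the stated expression. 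For $\Psi_2$ I would first render the integrand explicit through $|\bfp'\times\bfp''|^2=|\bfp'|^2|\bfp''|^2-(\bfp'\cdot\bfp'')^2$, so that
\[
  \kappa^2|\bfp'|=\frac{|\bfp''|^2}{|\bfp'|^3}-\frac{(\bfp'\cdot\bfp'')^2}{|\bfp'|^5}\,,
\]
and then differentiate each rational term in $\bfp'$, $\bfp''$ using only the speed derivative together with the bounded linear maps $\bfh\mapsto\bfp''\cdot\bfh''$ and $\bfh\mapsto\bfp'\cdot\bfh'$; collecting the four contributions reproduces the claimed $\Psi_2'$. In both cases Fr\'echet (not merely G\^ateaux) differentiability follows because the $C^3$-regularity of $\bfh$ makes every Taylor remainder quadratic in the relevant norm, exactly as in the proof of Lemma~\ref{lmm:FrechetDerivativeM}.

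The substantive case is $\Psi_3$, whose integrand $\beta^2|\bfp'|$ depends on the frame through the twist rate $\beta=\npada'\cdot\bpada$ from \eqref{eq:TwistRate}, hence on both $\bfp$ and $\phi$. Here I would feed in the frame derivative $V'_{\bfp,\theta}(\bfh,\phi)$ from Lemma~\ref{lmm:FrechetDerivativeM}, whose columns supply $\di_\bfp\npada(\bfh)=-\tfrac{\bfh'\cdot\npada}{|\bfp'|}\tp$, $\di_\bfp\bpada(\bfh)=-\tfrac{\bfh'\cdot\bpada}{|\bfp'|}\tp$, $\di_\theta\npada(\phi)=\phi\,\bpada$, and $\di_\theta\bpada(\phi)=-\phi\,\npada$. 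The twist direction is then short: since $d/ds$ and the perturbation derivative commute, $\di_\theta\beta(\phi)=(\phi\bpada)'\cdot\bpada-\phi(\npada'\cdot\npada)$, and using $\bpada\cdot\bpada=1$, $\bpada'\cdot\bpada=0$, $\npada'\cdot\npada=0$ this collapses to $\di_\theta\beta(\phi)=\phi'$; multiplying by $2\beta|\bfp'|$ and integrating yields the stated $\di_\theta\Psi_3$.

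The hardest part will be $\di_\bfp\beta$, since $\beta$ contains the arc-derivative $\npada'$, so I must differentiate the $s$-derivative of the frame. Commuting $\di_\bfp$ with $d/ds$ I would write $\di_\bfp\beta(\bfh)=(\di_\bfp\npada)'\cdot\bpada+\npada'\cdot\di_\bfp\bpada$, insert the frame derivatives above, and reduce with the orthonormality of $(\tp,\npada,\bpada)$: in $(\di_\bfp\npada)'\cdot\bpada$ only the term in which $d/ds$ hits $\tp$ survives because $\tp\cdot\bpada=0$, leaving $-\tfrac{\bfh'\cdot\npada}{|\bfp'|}(\tp'\cdot\bpada)$, while $\npada'\cdot\di_\bfp\bpada$ is rewritten with $\npada'\cdot\tp=-(\tp'\cdot\npada)$ (from $\tfrac{d}{ds}(\npada\cdot\tp)=0$) to give $\tfrac{\bfh'\cdot\bpada}{|\bfp'|}(\tp'\cdot\npada)$. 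Substituting these and the speed derivative into $\di_\bfp(\beta^2|\bfp'|)=2\beta\,\di_\bfp\beta\,|\bfp'|+\beta^2(\bfp'\cdot\bfh')/|\bfp'|$ and integrating over $[0,L]$ then yields $\di_\bfp\Psi_3$; adding $\di_\theta\Psi_3$ gives $\Psi_3'$. As before, Fr\'echet differentiability of all three functionals is inherited from that of the frame map in Lemma~\ref{lmm:FrechetDerivativeM} together with Taylor estimates controlling the quadratic remainders in the $C^2$- and $C^3$-norms.
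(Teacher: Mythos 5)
Your proposal is correct and follows exactly the route the paper takes: the paper's own proof is a one-line appeal to ``direct calculations using the definitions \eqref{eq:DefPsi1}--\eqref{eq:DefPsi3} together with \eqref{eq:DefCurvature}, \eqref{eq:TwistRate}, and Lemma~\ref{lmm:FrechetDerivativeM}'', and your chain-rule computations built on the speed derivative $(\bfp'\cdot\bfh')/|\bfp'|$, the Lagrange identity for $|\bfp'\times\bfp''|^2$, and the frame derivative $V'_{\bfp,\theta}$ are precisely those calculations carried out in full. Incidentally, your careful derivation exposes what appear to be minor typographical slips in the paper's stated formulas rather than errors on your side: a missing factor $1/L$ in $\Psi_1'$, and a spurious factor $|\bfp'|$ in the first term of $\di_\bfp\Psi_3$, since $2\beta\,(\di_\bfp\beta)\,|\bfp'|$ cancels the $1/|\bfp'|$ carried by $\di_\bfp\beta$.
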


\begin{proof}
  This follows by direct calculations using the definitions
  \eqref{eq:DefPsi1}--\eqref{eq:DefPsi3} together with
  \eqref{eq:DefCurvature}, \eqref{eq:TwistRate}, and
  Lemma~\ref{lmm:FrechetDerivativeM}. 
\end{proof}

We apply the BFGS-scheme from \cite{LiFuk01} to approximate a local
solution of \eqref{eq:UnconstrainedOptimizationProblem}. 
We start with an initial approximation for the spine curve of the
nanowire and for the geometry adapted frame. 
Let $\bfp^{(0)}\in(\Stri)^3$ be a three-dimensional cubic not-a-knot spline
describing the initial guess for the spine curve.  
Accordingly, we compute a rotation minimizing frame
$(\bft_{\bfp^{(0)}},\bfn_{\bfp^{(0)}},\bfb_{\bfp^{(0)}})$ along this
spline using the double reflection method from~\cite{WanJutZheLiu08}.
Then we choose a one-dimensional cubic not-a-knot spline
$\theta^{(0)}\in\Stri$ that describes the rotation function of the
initial guess for the geometry adapted
frame~$(\bft_{\bfp^{(0)}},\bfn_{\bfp^{(0)},\theta^{(0)}},\bfb_{\bfp^{(0)},\theta^{(0)}})$
as in~\eqref{eq:AdaptedFrame}.
As before, we write $V_{\bfp^{(0)},\theta^{(0)}}
:= [\bft_{\bfp^{(0)}} | \bfn_{\bfp^{(0)},\theta^{(0)}}
| \bfb_{\bfp^{(0)},\theta^{(0)}}]$. 
We store the coordinates of the knots of $\bfp^{(0)}$
and~$\theta^{(0)}$ in a vector $\xvec_0\in\R^{4n}$, where the first
$3n$ components are associated to $\bfp^{(0)}=:\bfp\{\xvec_0\}$ and
the last $n$ components correspond
to~$\theta^{(0)}=:\theta\{\xvec_0\}$. 
The vector $\xvec_0$ is the initial guess for the BFGS-scheme. 

Let $\xvec_\ell\in\R^{4n}$ denote the $\ell$-th iterate of the
BFGS-scheme. 
The $\ell$-th spine
curve~${\bfp^{(\ell)}=\bfp\{\xvec_\ell\}}\in(\Stri)^3$ is the
three-dimensional cubic not-a-knot spline determined by the knots
stored in the first $3n$ components of~$\xvec_\ell$. 
Denoting by $\theta^{(\ell)}=\theta\{\xvec_\ell\}\in\Stri$ the
one-dimensional spline described by the knots stored in the last~$n$
components of $\xvec_\ell$, the $\ell$-th geometry adapted frame 
$(\bft_{\bfp^{(\ell)}},\bfn_{\bfp^{(\ell)},\theta^{(\ell)}},\bfb_{\bfp^{(\ell)},\theta^{(\ell)}})$ 
is obtained from the $(\ell-1)$-th geometry adapted frame
$(\bft_{\bfp^{(\ell-1)}},\bfn_{\bfp^{(\ell-1)},\theta^{(\ell-1)}},\bfb_{\bfp^{(\ell-1)},\theta^{(\ell-1)}})$
using the formulas \eqref{eq:FrameTranslated1}--\eqref{eq:FrameTranslated2}
with~${\bfh=\bfp^{(\ell)}-\bfp^{(\ell-1)}}$ and
$\phi=\theta^{(\ell)}-\theta^{(\ell-1)}$.
We write $V_{\bfp^{(\ell)},\theta^{(\ell)}} = \Vptheta\{\xvec_\ell\}
:= [\bft_{\bfp^{(\ell)}} | \bfn_{\bfp^{(\ell)},\theta^{(\ell)}} | \bfb_{\bfp^{(\ell)},\theta^{(\ell)}}]$. 

Given $\xvec_\ell$, the $(\ell+1)$-th iterate of the BFGS-scheme is
defined by 
\begin{equation}
  \label{eq:updateIt}
  \xvec_{\ell+1}
  \,=\, \xvec_\ell + \lambda_\ell \dvec_\ell \,,
\end{equation}
where $\dvec_\ell$ is a solution to the linear system
\begin{equation*}
  %   \label{eq:}
  H_\ell \dvec_\ell
  \,=\, - \nabla \Phi\bigl[\bfp\{\xvec_\ell\}, 
  \Vptheta\{\xvec_\ell\} \bigr] \,,
\end{equation*}
and $\lambda_\ell\in(0,1)$ determines the stepsize.
The gradient $\nabla\Phi[\bfp\{\xvec_\ell\},\Vptheta\{\xvec_\ell\}]$ of
the regularized objective functional $\Phi$ from \eqref{eq:DefPhi}
with respect to $\xvec_\ell$ is obtained by evaluating the Fr\'echet
derivative of $\bfTrho$ from Theorem~\ref{thm:FrechetDerivativeTrho}
and the Fr\'echet derivatives of the penalty terms $\Psi_1$, $\Psi_2$,
and $\Psi_3$ from \eqref{lmm:DerivativePsi1Psi2} in
$(\bfp\{\xvec_\ell\},\Vptheta\{\xvec_\ell\})$ in the directions
corresponding to the components of $\xvec_\ell$. 
The matrix $H_\ell$ is an approximation to the Hessian matrix
$\nabla^2 \Phi[\bfp\{\xvec_\ell\},\Vptheta\{\xvec_\ell\}]$ with
respect to $\xvec_\ell$.
Starting with the initial guess $H_0 = \I_{4n}$, we use the cautious
update rule  
\begin{equation}
  \label{eq:HUpdate}
  H_{\ell+1} \,=\,
  \begin{cases}
    H_\ell 
    - \frac{H_{\ell}\svec_\ell \svec_\ell^\top H_\ell}
    {\svec_\ell^\top H_\ell \svec_\ell} 
    + \frac{\yvec_\ell\yvec_\ell^\top}{\yvec_\ell^\top \svec_\ell} \qquad
    &\text{if } \frac{\yvec_\ell^\top \svec_\ell}{|\svec_\ell|^2} 
    \,>\, \eps 
    \bigl|\nabla \Phi\bigl[\bfp\{\xvec_\ell\}, 
    \Vptheta\{\xvec_\ell\} \bigr]\bigr| \,,\\
    H_\ell &\text{otherwise} \,,
  \end{cases}
\end{equation}
from \cite{LiFuk01}.
Here, 
\begin{equation*}
%   \label{eq:}
  \svec_\ell 
  \,:=\, \xvec_{\ell+1} - \xvec_{\ell} \,,
  \qquad
  \yvec_\ell
  \,:=\, \nabla \Phi\bigl[\bfp\{\xvec_{\ell+1}\}, 
  \Vptheta\{\xvec_{\ell+1}\} \bigr] 
  - \nabla \Phi\bigl[\bfp\{\xvec_\ell\}, 
  \Vptheta\{\xvec_\ell\} \bigr] \,,
\end{equation*}
and $\eps>0$ is a parameter. 
It has been shown in \cite{LiFuk01} that this update rule ensures
positive definiteness of~$H_\ell$ throughout the BFGS-iteration. 

We use an inexact  Armijo-type line search to determine the stepsize
$\lambda_\ell$ in \eqref{eq:updateIt}.
Choosing parameters~${\sigma\in (0,1)}$ and $\delta\in (0,1)$, we
identify the smallest integer $j=0,1,\dots$ such that
$\delta^j$ satisfies 
\begin{equation}
  \label{eq:Backtracking}
  \Phi\bigl(\bfp\{\xvec_\ell+\delta^j\dvec_\ell\}, 
  \Vptheta\{\xvec_\ell+\delta^j\dvec_\ell\} \bigr)
  \,\leq\, \Phi \bigl(\bfp\{\xvec_\ell\}, 
  \Vptheta\{\xvec_\ell\} \bigr)
  + \sigma \delta^j \nabla \Phi\bigl[\bfp\{\xvec_\ell\}, 
  \Vptheta\{\xvec_\ell\} \bigr]
  \cdot \dvec_\ell \,.
\end{equation}
Then, we set $\lambda_\ell := \delta^j$.

In our numerical examples below, we use the parameters
${\eps=10^{-5}}$, ${\sigma=10^{-4}}$, and ${\delta=0.9}$ 
in~\eqref{eq:HUpdate} and~\eqref{eq:Backtracking}.
We approximate all line integrals over $\Gamma$ using a composite
Simpson rule. 
We stop the BFGS iteration when
${|\xvec_{\ell+1}-\xvec_\ell|/|\xvec_\ell|<10^{-4}}$.

%%%%%%%%%%%%%%%%%%%%%%%%%%%%%%%%%%%%%%%%%%%%%%%%%%%%%%%%%%%%%%%%%%%%%% 
\section{Numerical examples}
\label{sec:NumericalResults}
%%%%%%%%%%%%%%%%%%%%%%%%%%%%%%%%%%%%%%%%%%%%%%%%%%%%%%%%%%%%%%%%%%%%%% 
We discuss three numerical examples, where we use the shape
optimization scheme developed in the previous section to design highly
chiral thin silver and gold nanowires at four different frequencies in
the optical band. 
We work at
\begin{itemize}
\item $\fopt=400$~THz, i.e., the wave length is $\lambdaopt=749$~nm (red 
  light), 
\item $\fopt=500$~THz, i.e., the wave length is $\lambdaopt=600$~nm (orange
  light), 
\item $\fopt=600$~THz, i.e., the wave length is $\lambdaopt=500$~nm (green
  light), 
\item $\fopt=700$~THz, i.e., the wave length is $\lambdaopt=428$~nm (blue
  light). 
\end{itemize}
The relative electric permittivities $\epsr$ of silver and gold
corresponding to these frequencies can be found
in~Table~\ref{tab:MaterialParameters} (see \cite[p.~6]{JoChri72} for the
complete data set). 

We focus on elliptical cross-sections $\Drho'=\rho B'$, $\rho>0$,
where the lengths of the semi axes of the rescaled cross-section $B'$
are denoted by~$0<a\leq b<1$. 
As discussed in Remark~\ref{rem:spr}, a frequency~$\fres$ is called a
plasmonic resonance frequency of such a thin metallic nanowire, if the
aspect ratio $b/a$ of its elliptical cross-section satisfies
$b/a=-\real(\eps_r(\fres))$, and if~$\imag(\eps_r(\fres))>0$ is
sufficiently small. 
The total interaction cross-section of the nanowire (i.e., the
Hilbert-Schmidt norm of the associated far field operator) at a
plasmonic resonance frequency is much larger than away from this
frequency.
Accordingly, thin metallic nanowires are strongly scattering at
plasmonic resonance frequencies. 
Strongly scattering highly em-chiral nanowires would be very
interesting for the design of novel chiral metamaterials (see,
e.g.,~\cite{HenSchDuaGie17,HofEtAl19,ValEtAl13}). 
Thus, we choose in our first two examples the aspect ratios of the
elliptical cross-sections of the nanowires such that the
frequency~$\fopt$, where the shape optimization is carried out, is a
plasmonic resonance frequency, i.e., $\fopt=\fres$.
We show that strongly scattering thin metallic nanowires with
fairly large em-chirality measures can be obtained. 
In our third example we then design thin metallic nanowires with
even larger em-chirality measures, choosing the frequency~$\fopt$
to be around $100$ to $150$~THz below the plasmonic resonance
frequency~$\fres$ of the nanowire, i.e., $\fopt\not=\fres$. 
However, in this case the total interaction cross-section of the
optimized nanowire is smaller than in the previous examples. 

As already pointed out in Remark~\ref{rem:rho}, the scaling parameter
$\rho>0$ that determines the thickness of the nanowire $\Drho$
does not affect the outcome of the shape optimization. 
Accordingly, the results that we present in this section are valid for 
any $\rho>0$ that is small enough such that the leading order
term~$\TcalDrho$ in~\eqref{eq:FFExp} constitutes an acceptable 
approximation of the far field operator $\FcalDrho$. 
In \cite{CapGriKno21} we compared~$\TcalDrho$ for circular
cross-sections with radius $\rho$, a real-valued electric
permittivity $\eps_r>0$, and a whole range of values for $\rho$ with
numerical approximations of $\FcalDrho$ that have been computed using
the C++ boundary element library Bempp~\cite{SmiBetArrPhi15}.  
This study suggests that $\TcalDrho$ is an accurate approximation of
$\FcalDrho$ within a relative error of less than $5\%$ when the radius
of the thin tube $\Drho$ is less than $1.5\%$ of the  wave length of
the incident field, i.e., when~$k\rho\lesssim 0.1$.
For instance, choosing $\rho=0.1/\kopt$ means that the radius $\rho$
of the nanowire with circular cross-section is between~$6.8$~nm at
$\fopt=700$~THz and $11.9$~nm at $\fopt=400$~THz. 
Here, $\kopt$ denotes the wave number corresponding to the
frequency~$\fopt$.
In our visualizations of the optimized nanowires with elliptical
cross-sections, and for the plots of the total interaction
cross-section of these optimized nanowires in the examples below, we 
choose $\rho$ such that~$\kopt\rho \sqrt{a b} = 0.05$.

\begin{example}[Optimizing the twist rate of the cross-section along a
  straight nanowire] 
  \label{exa:1}
  In our first numerical example we discuss thin straight silver and
  gold nanowires with elliptical cross-sections.
  We consider four different frequencies $\fopt=400,500,600$, and
  $700$~THz, and for each of these frequencies we choose a different
  aspect ratio for the elliptical cross-section of the nanowire such
  that $\fopt=\fres$ is a plasmonic resonance frequency of the
  nanowire, i.e.,~$b/a=-\real(\epsr(\fopt))$. 
  We fix the spine curve of the nanowire to be a straight line segment,
  and we optimize just the twist rate of the elliptical cross-section 
  along the spine curve of the nanowire, i.e., the twist function
  $\theta$ in \eqref{eq:DefRtheta}--\eqref{eq:DefDrho}.
  We use the shape optimization scheme from
  Section~\ref{sec:ShapeOptimization}. 
  For the regularization parameters in \eqref{eq:DefPhi} we choose
  $\alpha_1 = \alpha_2 = 0$ and~$\alpha_3 = 5\times 10^{-5}$.

  To discuss the influence of the length of the nanowire on the
  optimized shape of the nanowire, we consider four different values
  $L=j\lambdaopt/4$ with $j=1,2,4,8$ for the length constraint in
  \eqref{eq:DefPsi1}.
  As before, $\lambdaopt$ denotes the wave length at the frequency
  $\fopt$. 
  Accordingly, we choose the maximal degree $N$ of circularly polarized
  vector spherical harmonics that is used in the discretization of the
  operator $\bfTrho(\bfp,\Vptheta)$ and of its Fr\'echet derivative
  $\bfTrho'[\bfp,\Vptheta](\bfh,\phi)$ (see
  Remark~\ref{rem:DiscretisationT}) to be~$N=2,4,6,8$ for 
  $L=j\lambdaopt/4$ with $j=1,2,4,8$, respectively. 

  We use cubic not-a-knot splines with $n = 10$ knots to describe the
  (fixed) spine curve $\Gamma$ and the twist function $\theta$ and $11$
  quadrature points for the composite Simpson rule on each spline
  segment to approximate integrals over $\Gamma$.
  Since the em-chirality measure $\chiHS$, and thus also the objective
  functional $\Phi$, are not differentiable at an em-achiral configuration,
  we choose an em-chiral initial guess for the shape optimization
  algorithm. 
  To this end, we start with a rotation minimizing frame along the
  straight spine curve and add a small random twist.
  The same random twist is used for all frequencies and length
  constraints. 

  \begin{figure}[thp]
    \centering
    \includegraphics[height=6cm]{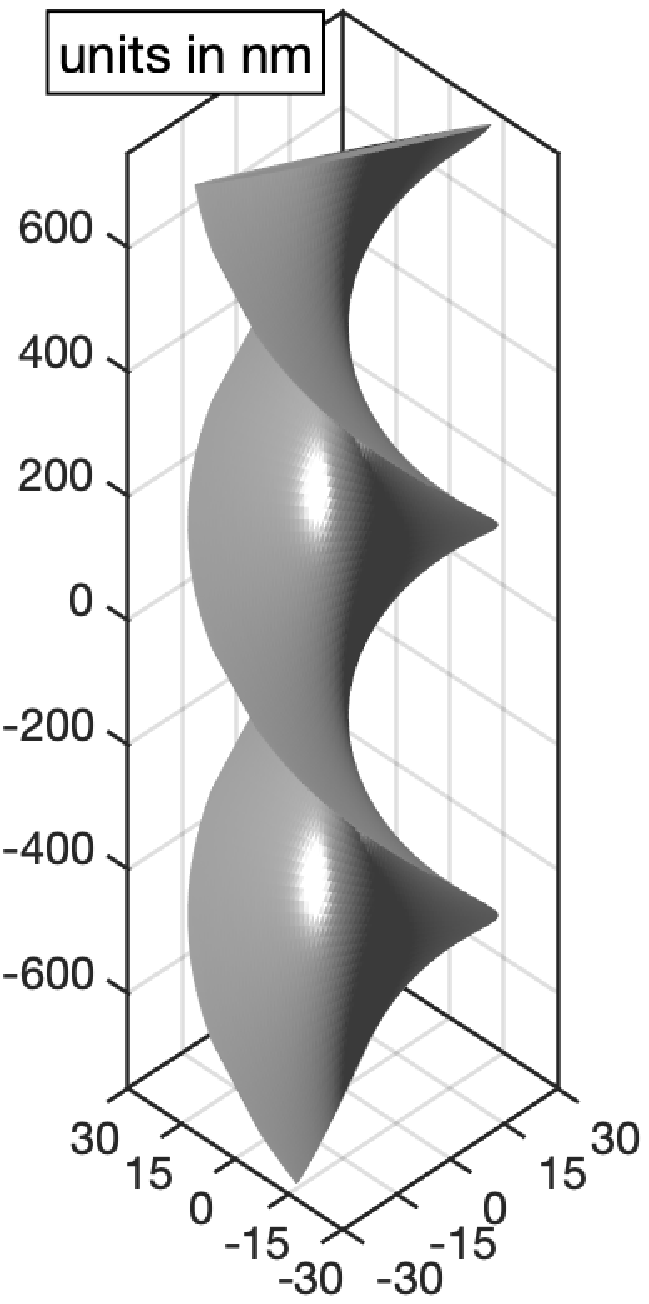} \qquad
    \includegraphics[height=6cm]{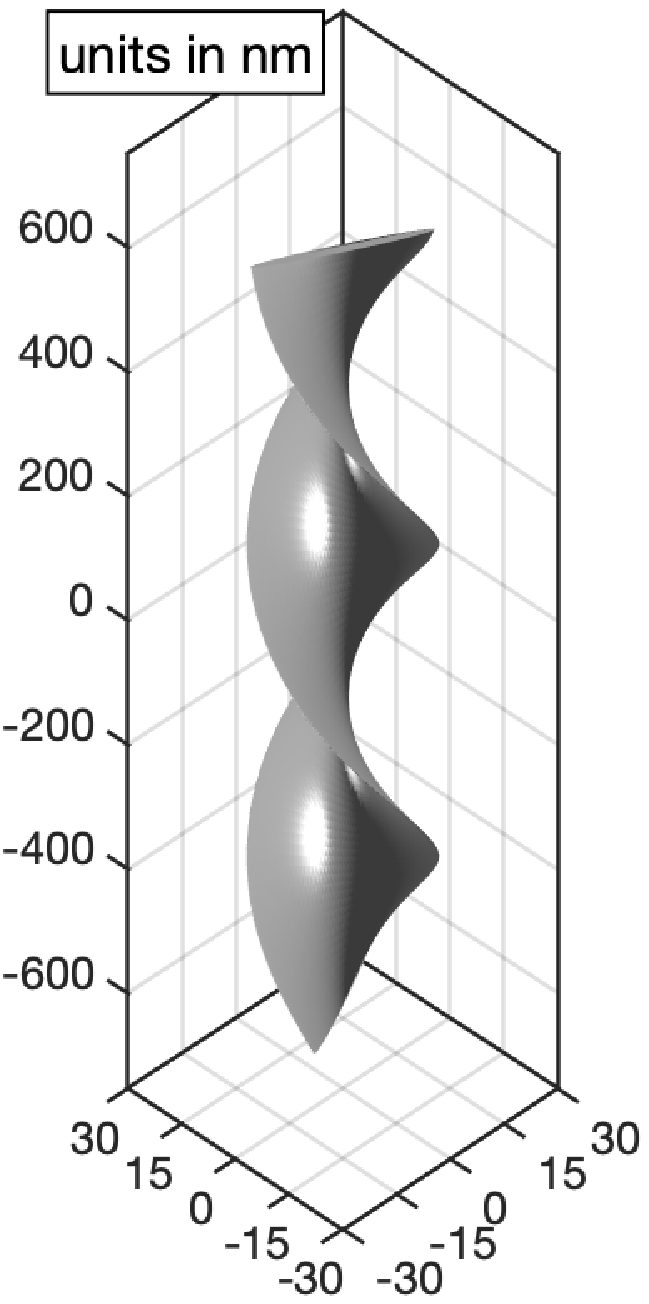} \qquad
    \includegraphics[height=6cm]{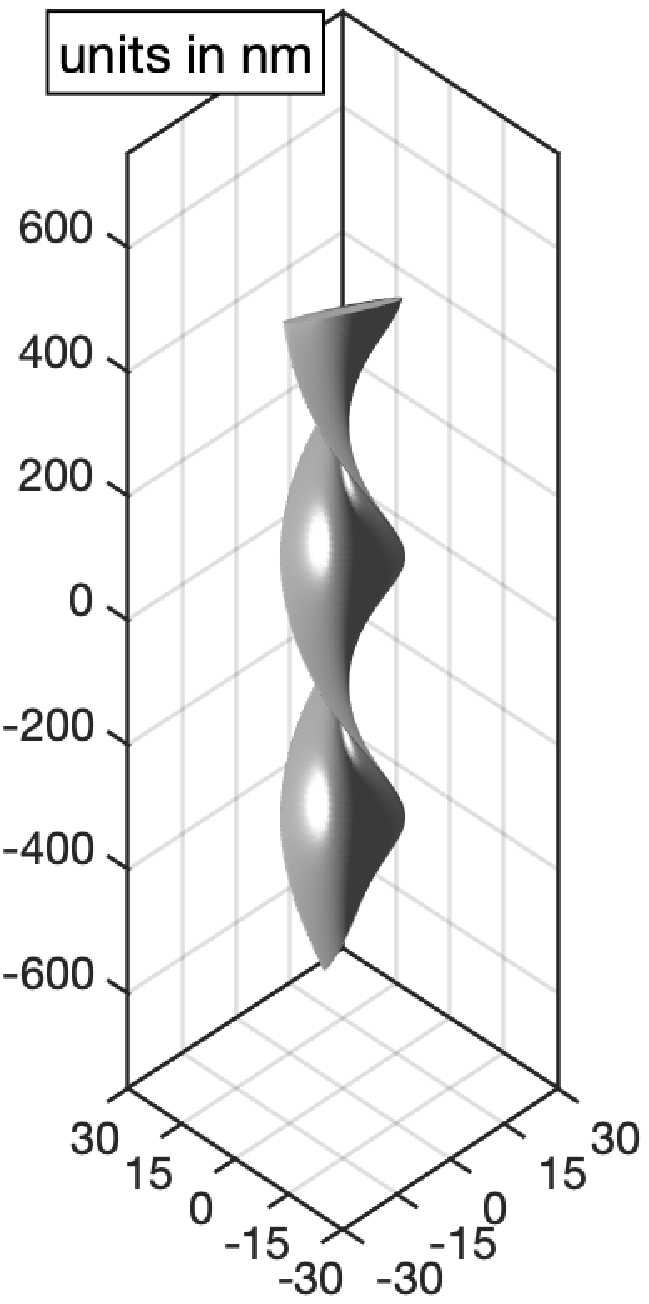} \qquad
    \includegraphics[height=6cm]{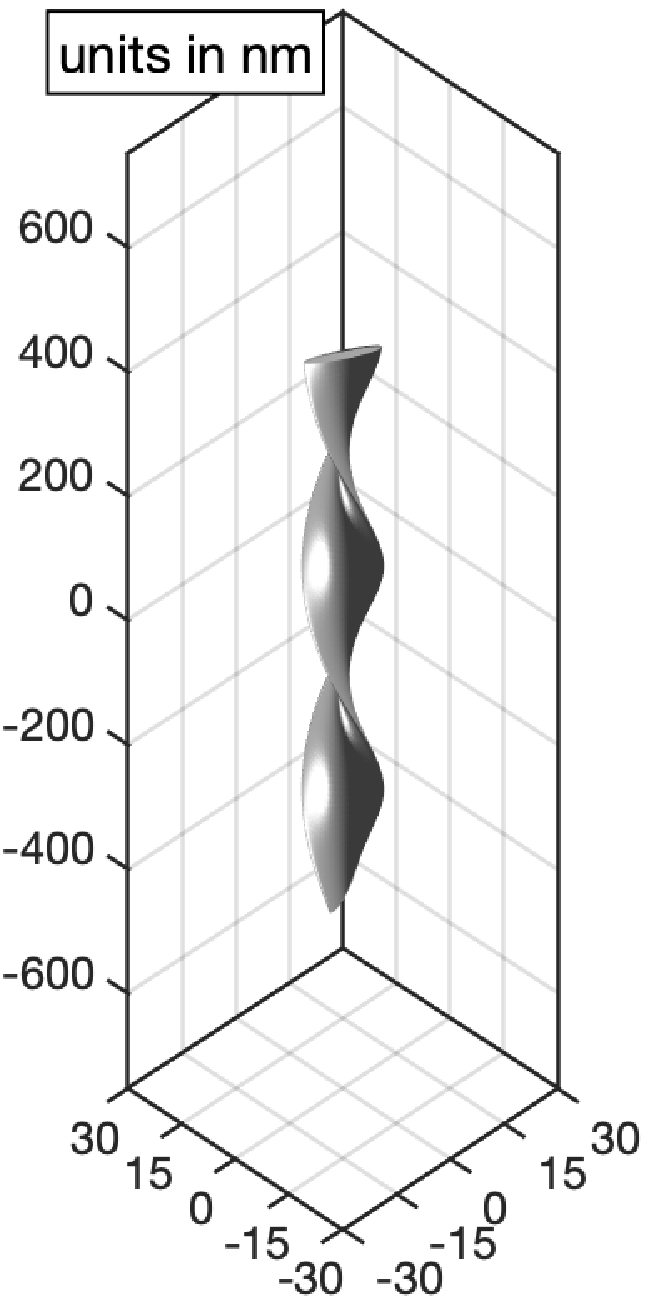}
    \caption{Optimized silver nanowires of length
      $L=2\lambdaopt$ from Example~\ref{exa:1} for
      $\fopt=400,500,600,700$~THz (left to right).
    }
    \label{fig:Exa1-1}
  \end{figure}
  In Figure~\ref{fig:Exa1-1} we show the optimized twisted silver
  nanowires obtained by the shape optimization for $L=2\lambdaopt$ and 
  $\fopt=400,500,600$, and~$700$~THz (left to right). 
  The direction of the twist of the optimized structure depends
  on the initial guess. 
  The aspect ratios $b/a$ of the elliptical cross-sections vary
  between $26.94$ at $\fopt=400$~THz and $5.94$ at $\fopt=700$~THz. 
  The optimized twist rate per wave length of the cross-sections of
  the four optimized twisted silver nanowires around the straight
  spine curve is almost constant and virtually the same for all
  frequencies. 

  \begin{table}
    \begin{subfigure}{0.5\textwidth}%
      \centering
      \begin{tabular}{llllll} 
        \hline
        \multicolumn{6}{c}{\textbf{Silver}} \\ \hline 
        \multicolumn{2}{r}{$\fopt$~[THz]} & $400$ & $500$ & $600$& $700$ \\ 
        % & $\lambda [\mathrm{nm}]$ & $749$ & $600$ & $500$& $428$ \\
        \hline \hline 
        {\multirow{2}{*}{$L=\frac{\lambdaopt}{4}$}} & $\Jtwo$&0.26&0.26&0.26&0.26\\ 
        & ﻿$\JHS$ &0.12&0.12&0.12&0.12\\ \hline 
        {\multirow{2}{*}{$L=\frac{\lambdaopt}{2}$}} & $\Jtwo$&0.39&0.39&0.39&0.39\\ 
        & ﻿$\JHS$ &0.17&0.17&0.17&0.17\\ \hline 
        {\multirow{2}{*}{$L=\lambdaopt$}} & $\Jtwo$&0.37&0.37&0.37&0.37\\ 
        & ﻿$\JHS$ &0.20&0.20&0.20&0.20\\ \hline 
        {\multirow{2}{*}{$L=2\lambdaopt$}} & $\Jtwo$&0.32&0.32&0.32&0.32\\ 
        & ﻿$\JHS$ &0.19&0.19&0.19&0.19\\ \hline 
      \end{tabular}
    \end{subfigure}
    \begin{subfigure}{0.5\textwidth}%
      \centering
      \begin{tabular}{llllll} 
        \hline
        \multicolumn{6}{c}{\textbf{Gold}} \\ \hline 
        \multicolumn{2}{r}{$\fopt$~[THz]} & $400$ & $500$ & $600$& $700$ \\ 
        % & $\lambda [\mathrm{nm}]$ & $749$ & $600$ & $500$& $428$ \\
        \hline \hline 
        {\multirow{2}{*}{$L=\frac{\lambdaopt}{4}$}} & $\Jtwo$&0.26&0.23&0.09&0.03\\ 
        & ﻿$\JHS$ &0.12&0.12&0.02&0.003\\ \hline 
        {\multirow{2}{*}{$L=\frac{\lambdaopt}{2}$}} & $\Jtwo$&0.39&0.37&0.17&0.06\\ 
        & ﻿$\JHS$ &0.17&0.17&0.03&0.004\\ \hline 
        {\multirow{2}{*}{$L=\lambdaopt$}} & $\Jtwo$&0.36&0.35&0.13&0.03\\ 
        & ﻿$\JHS$ &0.20&0.19&0.04&0.003\\ \hline 
        {\multirow{2}{*}{$L=2\lambdaopt$}} & $\Jtwo$&0.32&0.30&0.08&0.01\\ 
        & ﻿$\JHS$ &0.19&0.19&0.03&0.0008\\ \hline 
      \end{tabular}
      % \caption{tabtext}
      % \label{fig:figure:tablabel}
    \end{subfigure}
    \caption{Normalized em-chirality measures $\Jtwo$ and $J_\HS$ of
      optimized silver (left) and gold nanowires (right) from
      Example~\ref{exa:1}.} 
    \label{tab:Exa1-2}
  \end{table}
  In Table~\ref{tab:Exa1-2} we collect the values of the normalized
  em-chirality measures $\Jtwo$ and $J_\HS$ from \eqref{eq:J2}
  and~\eqref{eq:JHS} of the optimized straight twisted silver and gold 
  nanowires for the four different frequencies and the four different
  length constraints. 
  Each pair of entries in these tables corresponds to a different
  optimized twisted silver or gold nanowire. 
  For the silver nanowires we observe that the values of $\Jtwo$ and
  $J_\HS$ that are reached for the different optimized structures are
  independent of the frequency. 
  On the other hand, for the optimized gold nanowires these values
  change significantly with frequency.  
  While at $\fopt=400$ and $500$~THz the normalized em-chirality
  measures of the optimized twisted gold nanowires are comparable to
  those of the optimized twisted silver nanowires, the normalized
  em-chirality measures of the optimized twisted gold nanowires
  quickly decrease at higher frequencies. 
  This is a consequence of the increasing imaginary part of the
  relative electric permittivity of gold at higher frequencies (see 
  Table~\ref{tab:MaterialParameters}).
  For the gold nanowires the aspect ratio~$b/a$ of the elliptical
  cross-section varies between~$20.11$ at $\fopt=400$~THz and $1.69$
  at $\fopt=700$~THz, i.e., the cross-section is somewhat rounder than
  for the corresponding silver nanowires.

  \begin{figure}[thp]
    \includegraphics[height=3.78cm]{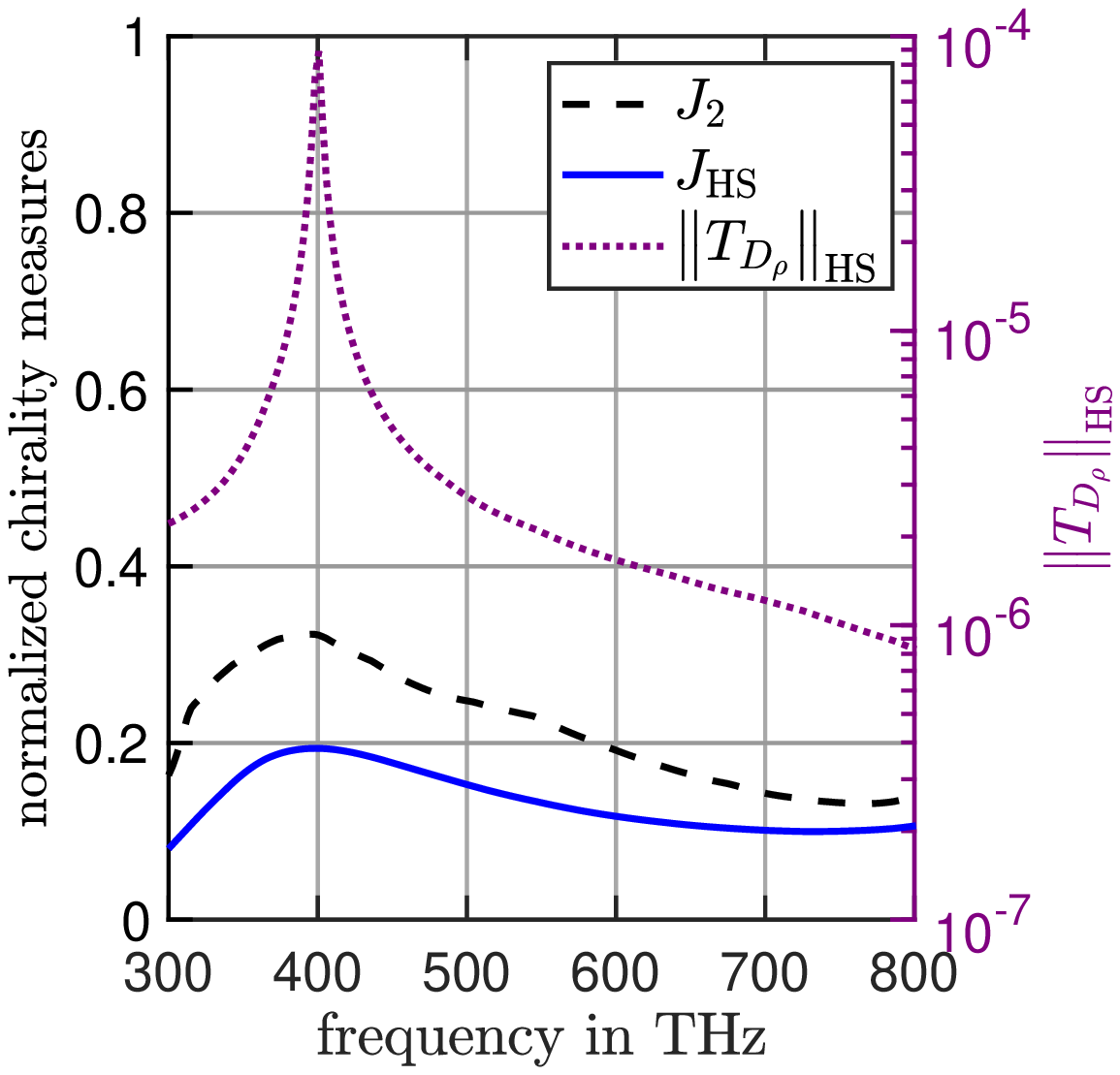} \,
    \includegraphics[height=3.78cm]{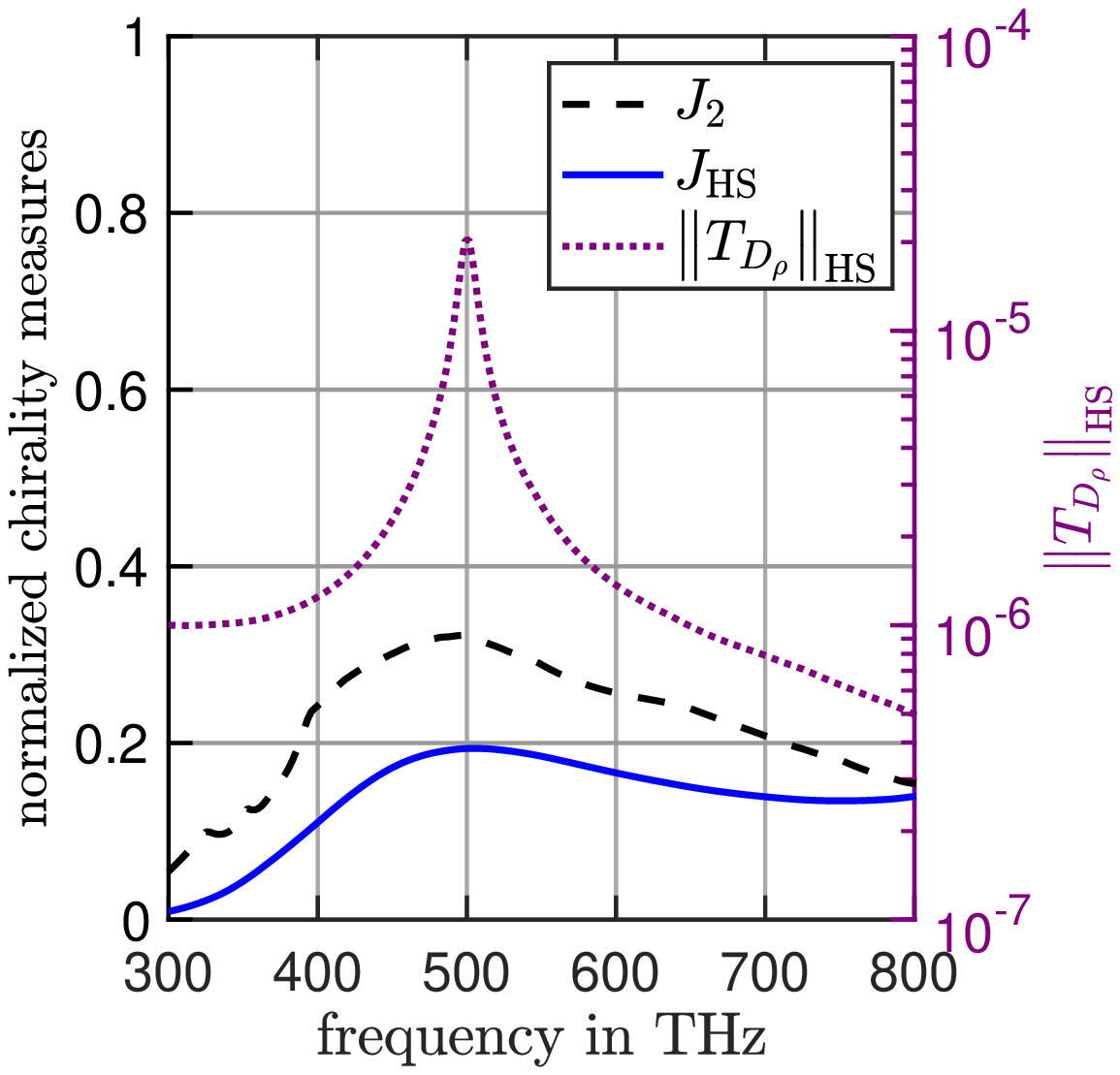} \,
    \includegraphics[height=3.78cm]{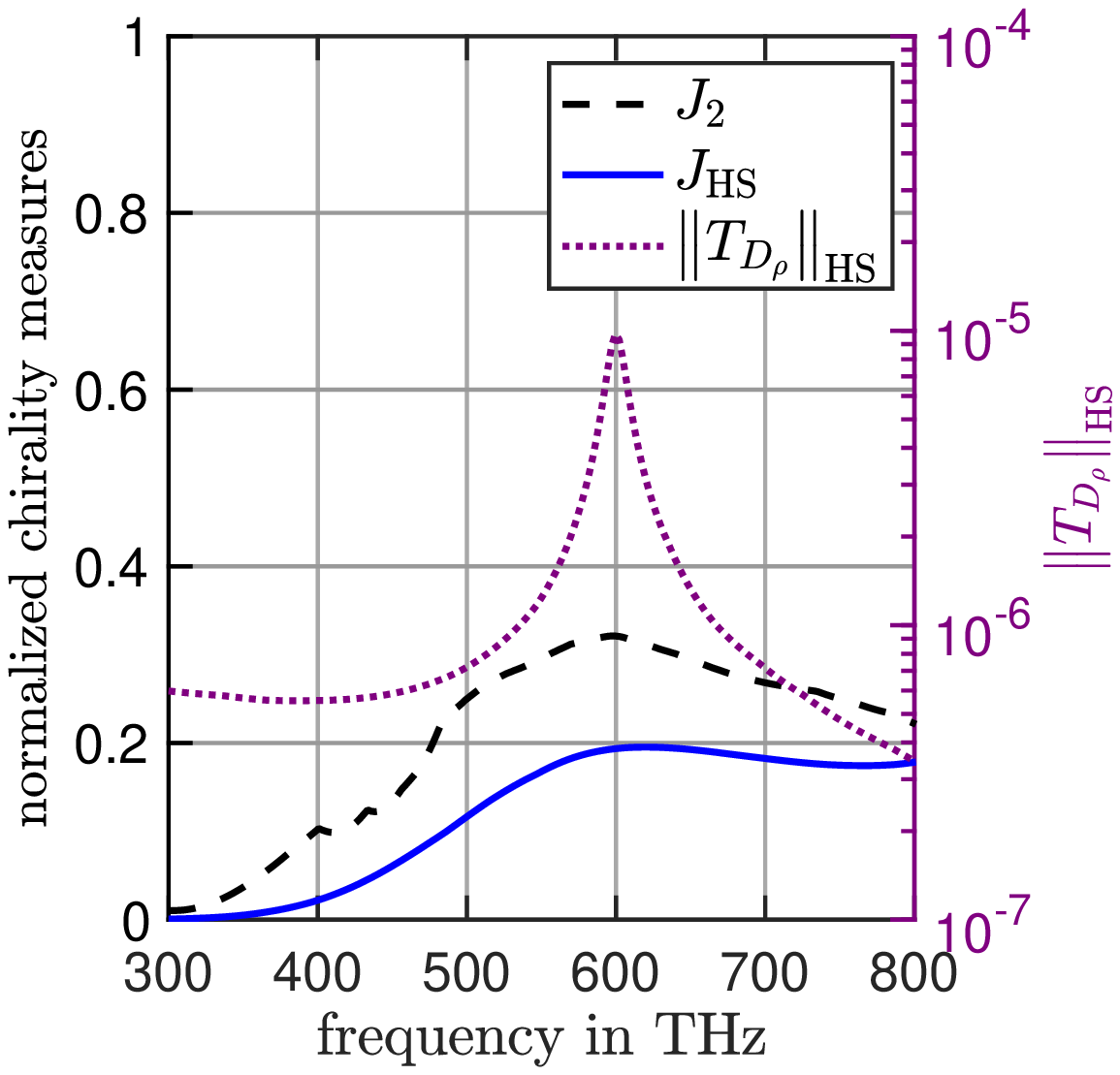} \,
    \includegraphics[height=3.78cm]{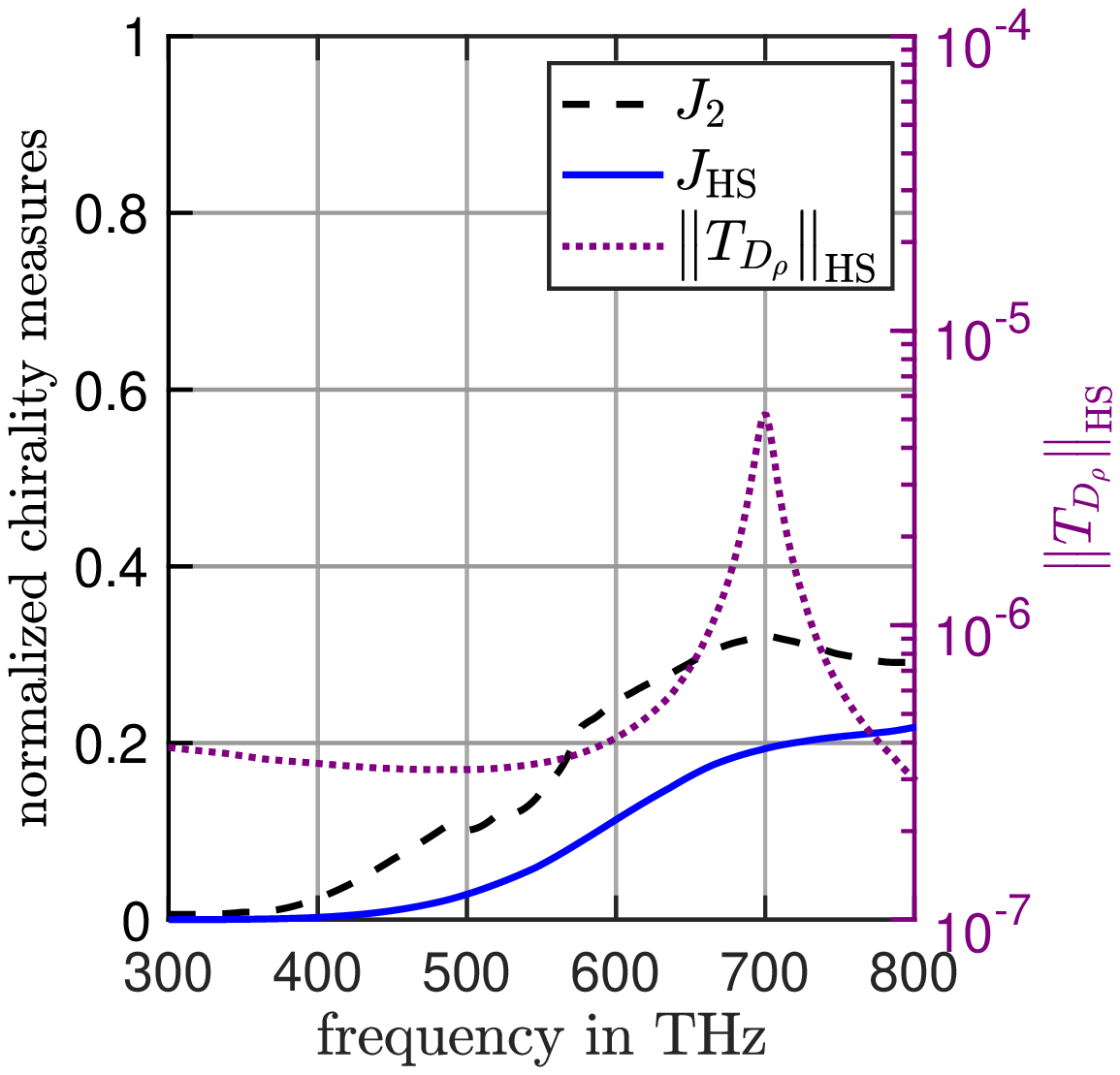}
    \caption{ Frequency scans for optimized silver nanowires of length
      $L=2\lambdaopt$ from Example~\ref{exa:1} at
      $\fopt=400,500,600,700$~THz (left to right).} 
    \label{fig:Exa1-3}
  \end{figure}
  \begin{figure}[thp]
    \includegraphics[height=3.78cm]{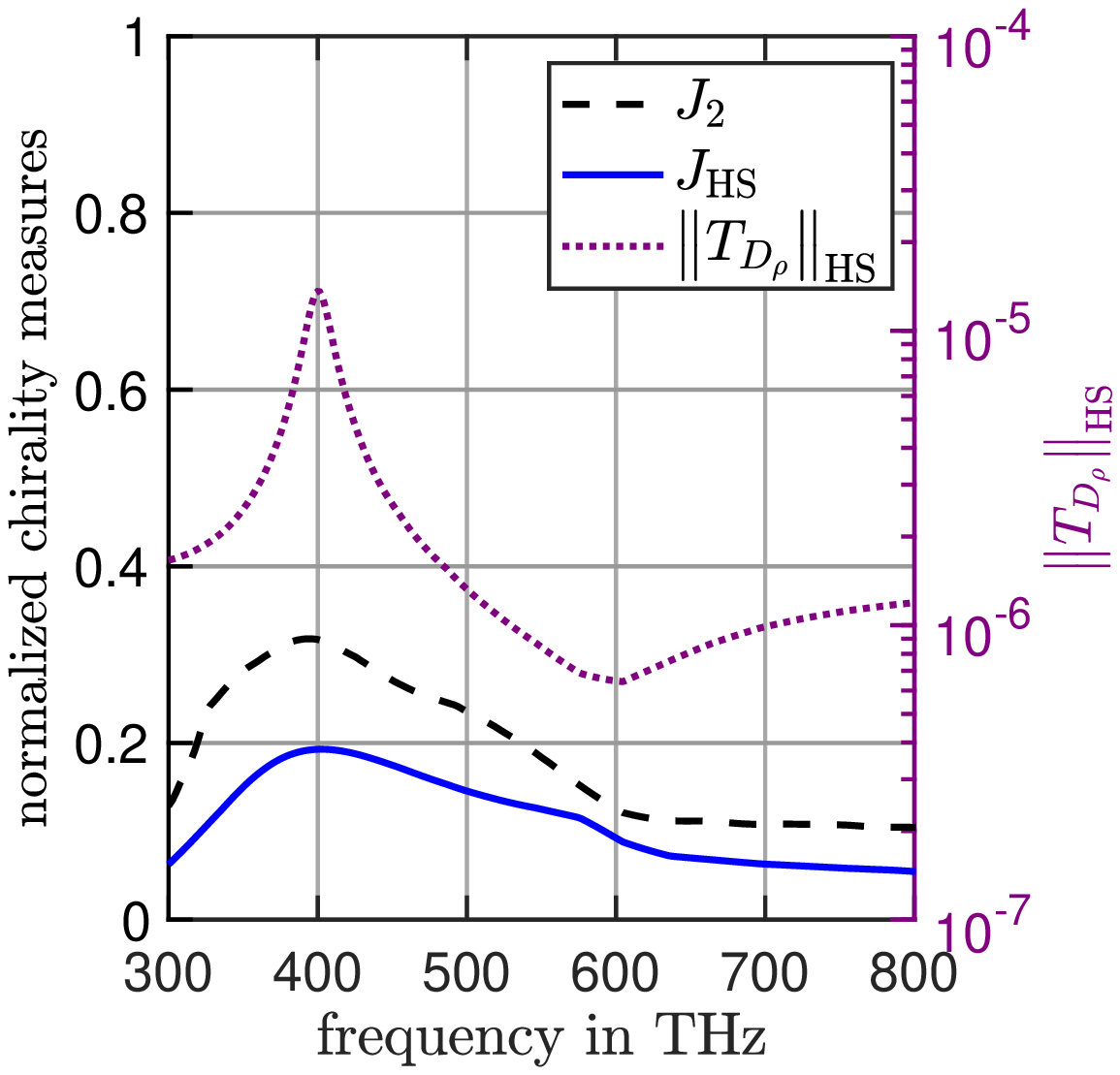} \,
    \includegraphics[height=3.78cm]{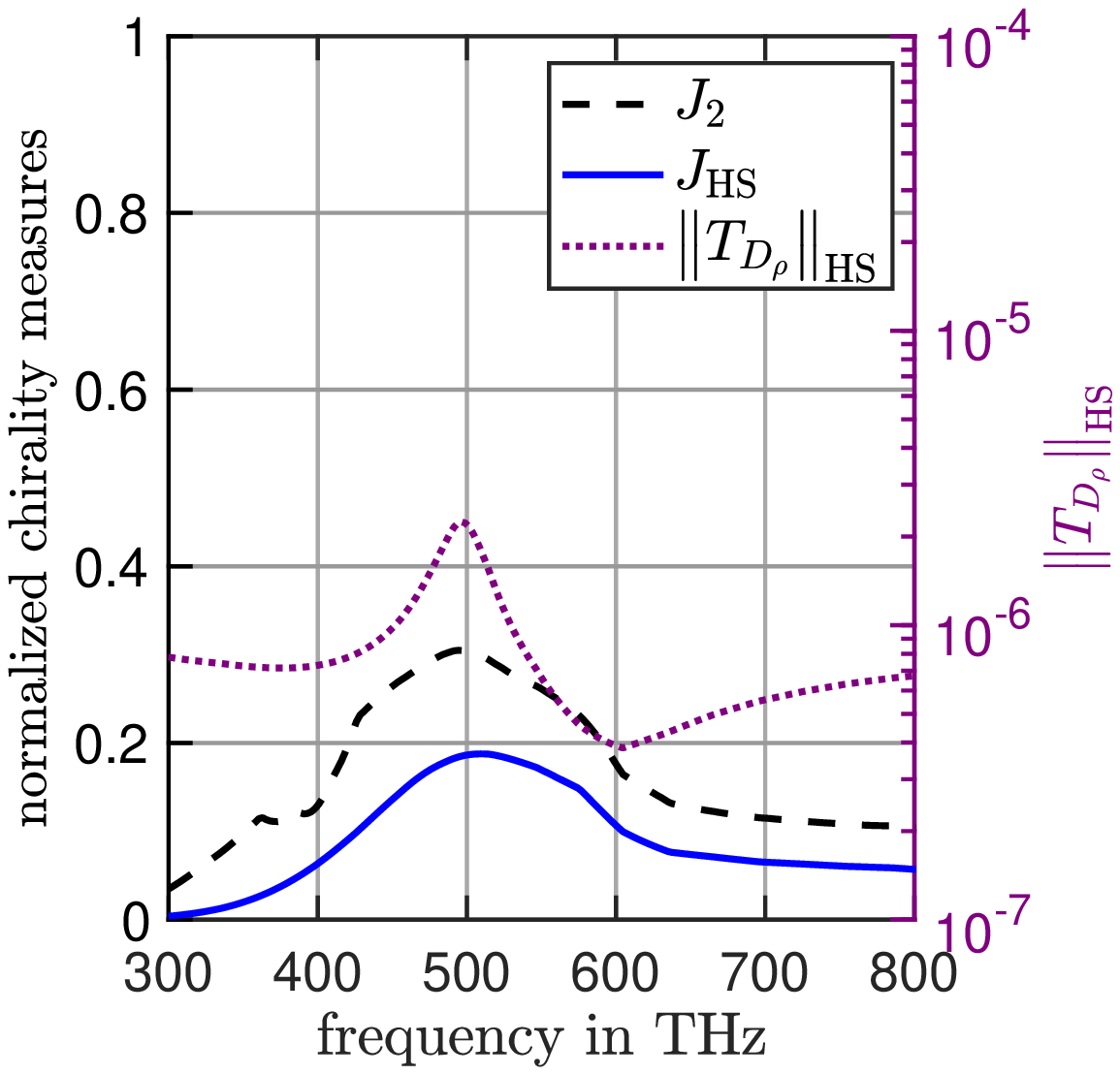} \,
    \includegraphics[height=3.78cm]{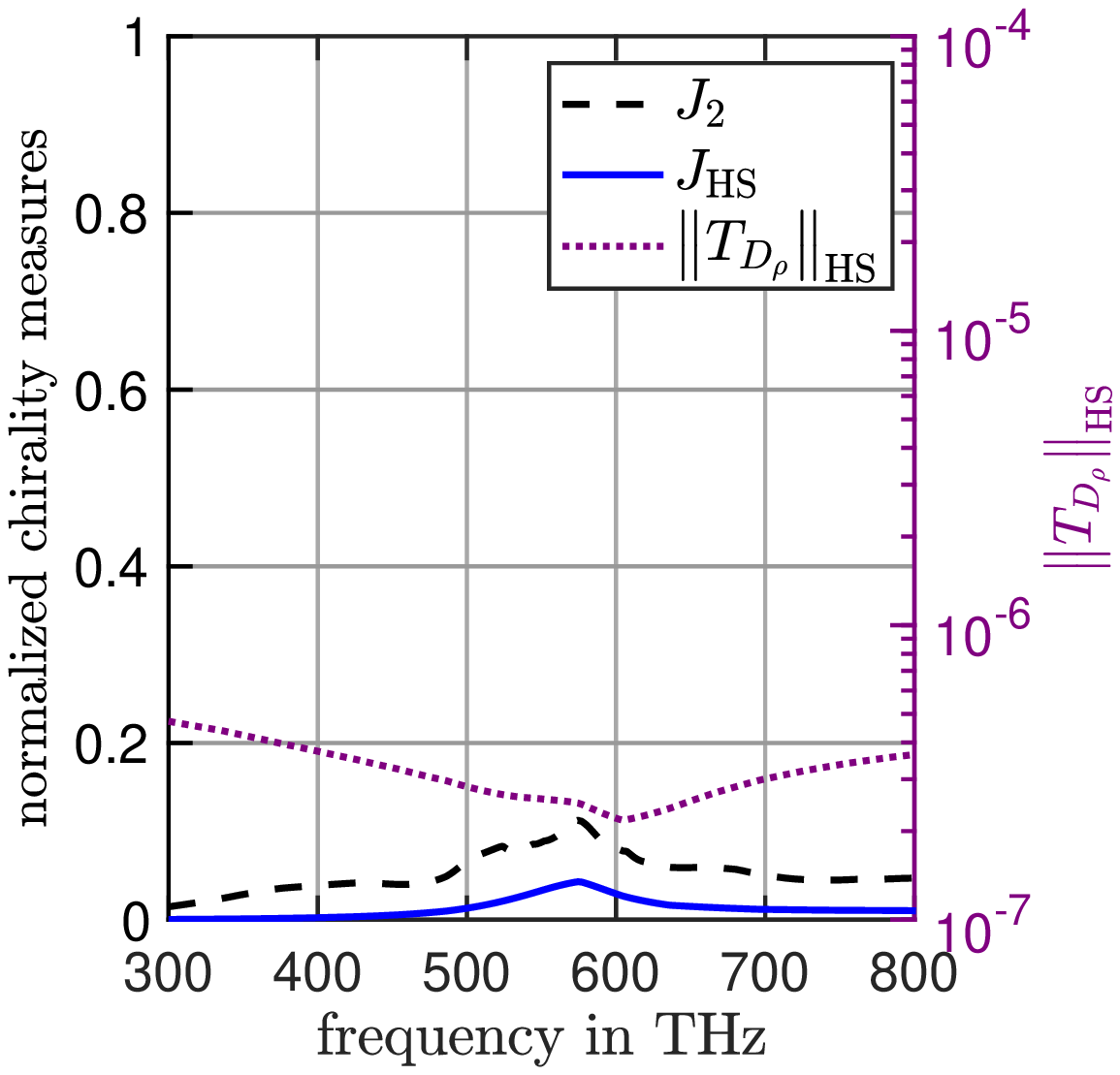} \,
    \includegraphics[height=3.78cm]{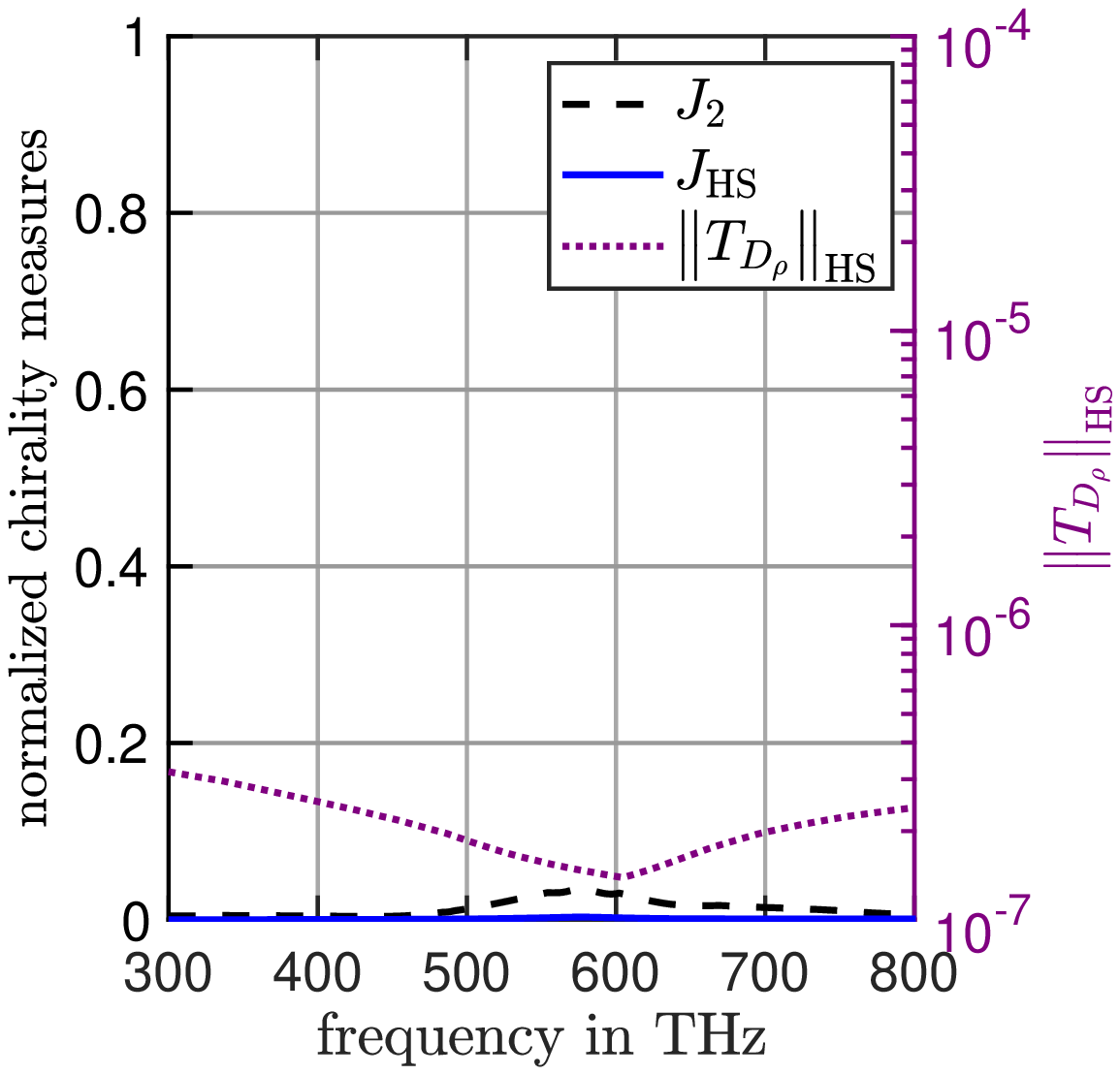}
    \caption{ Frequency scans for optimized gold nanowires of length
      $L=2\lambdaopt$ from Example~\ref{exa:1} at
      $\fopt=400,500,600,700$~THz (left to right).} 
    \label{fig:Exa1-4}
  \end{figure}
  Finally, we study the frequency dependence of the normalized
  em-chirality measures for the optimized twisted silver and gold
  nanowires of length $L=2\lambdaopt$ that have been optimized at
  $\fopt=400,500,600$, and~$700$~THz. 
  In each of the plots in Figure~\ref{fig:Exa1-3} and \ref{fig:Exa1-4}
  the optimized nanowire is fixed.
  However, it is illuminated with incident waves of different
  frequencies, and thus its frequency-dependent relative electric 
  permittivity~$\epsr$ varies.
  In Figure~\ref{fig:Exa1-3} we plot the normalized em-chirality
  measures $\Jtwo$ (dashed) and $\JHS$ (solid) and an approximation of 
  the total interaction cross-section (dotted) of the optimized thin
  twisted silver nanowires shown in Figure~\ref{fig:Exa1-1} over a
  frequency range between $300$ and $800$~THz.
  The approximation of the total interaction cross-section is obtained
  by evaluating the Hilbert-Schmidt norm of the operator~$\TcalDrho$
  from \eqref{eq:DefTcal} with $\rho = 0.05 / (\kopt \sqrt{a b})$. 
  The sharp peak in the total interaction cross-section is exactly at
  the plasmonic resonance frequency $\fres$ of the corresponding thin
  silver nanowire. 
  It is important to note that, in contrast to the normalized
  em-chirality measures, the total interaction cross-section is
  plotted in a logarithmic scale. 
  We find that $\Jtwo$ and $\JHS$ have a peak at the plasmonic
  resonance frequency $\fres$ as well.
  This is the frequency that has been used in the shape optimization,
  i.e., $\fopt=\fres$. 

  In Figure~\ref{fig:Exa1-4} we show the corresponding frequency scans
  for the optimized thin twisted gold nanowires. 
  For the gold nanowires that have been optimized at $\fopt=400$ and
  $500$~THz, the results are similar as for the silver nanowires that
  have been optimized at the same frequencies in
  Figure~\ref{fig:Exa1-3}. 
  On the other hand, for the gold nanowires optimized at $\fopt=600$ and
  $700$~THz, the plasmonic resonance is no longer visible in the plots
  of the total interaction cross-section.
  This is a consequence of the larger imaginary part of the electric
  permittivity of gold at $\fopt=600,700$~THz (see
  Table~\ref{tab:MaterialParameters}). 
  For these two higher frequencies, the values of the normalized
  em-chirality measures $\Jtwo$ and $\JHS$ are small across the entire
  frequency band. 
  The plasmonic resonance seems to be required to obtain thin metallic
  nanowires that exhibit large normalized em-chirality
  measures.~\hfill$\lozenge$ 
\end{example}

\begin{example}[Optimizing the shape of the spine curve of the nanowire] 
  \label{exa:2}
  In our second example we consider a free-form shape optimization for
  the spine curve of thin silver and gold nanowires with elliptical
  cross sections, but we do not optimize the twist rate of the
  cross-section of the nanowire along the spine curve.
  As in the first example, we consider four different frequencies
  $\fopt=400, 500, 600$, and $700$~THz, and for each of these frequencies
  we again choose the aspect ratio of the elliptical cross-section of
  the nanowire such that $\fopt=\fres$ is a plasmonic resonance
  frequency for the nanowire. 
  We use the optimization scheme from
  Section~\ref{sec:ShapeOptimization}. 
  For the regularization parameters in \eqref{eq:DefPhi} we choose
  $\alpha_1 = 5$, $\alpha_2= 8\times 10^{-3}$, and $\alpha_3 = 0$.
  
  The length constraint for the nanowire is set to be
  $L=3\lambdaopt/2$, and, accordingly, we choose the maximal degree
  $N$ of circularly polarized vector spherical harmonics that is used
  in the discretization of the operator~$\bfTrho(\bfp,\Vptheta)$ and
  of its Fr\'echet derivative $\bfTrho'[\bfp,\Vptheta](\bfh,\phi)$ 
  (see Remark~\ref{rem:DiscretisationT}) to be~$N=5$. 
  We use cubic not-a-knot splines with $n=20$ knots to parametrize the
  spine curve $\Gamma$ and the (fixed) twist function~$\theta$
  and~$11$ quadrature points on each spline segment to discretize line 
  integrals over $\Gamma$. 
  For the initial guess for the spine curve we use a straight line of
  length $L=3\lambdaopt/2$, and we add a small random perturbation to
  obtain an em-chiral configuration. 
  The same random perturbation is used for all frequencies.
  The initial geometry adapted frame is chosen to be rotation
  minimizing. 
  
  \begin{figure}[thp]
    \centering
    \includegraphics[height=4.2cm]{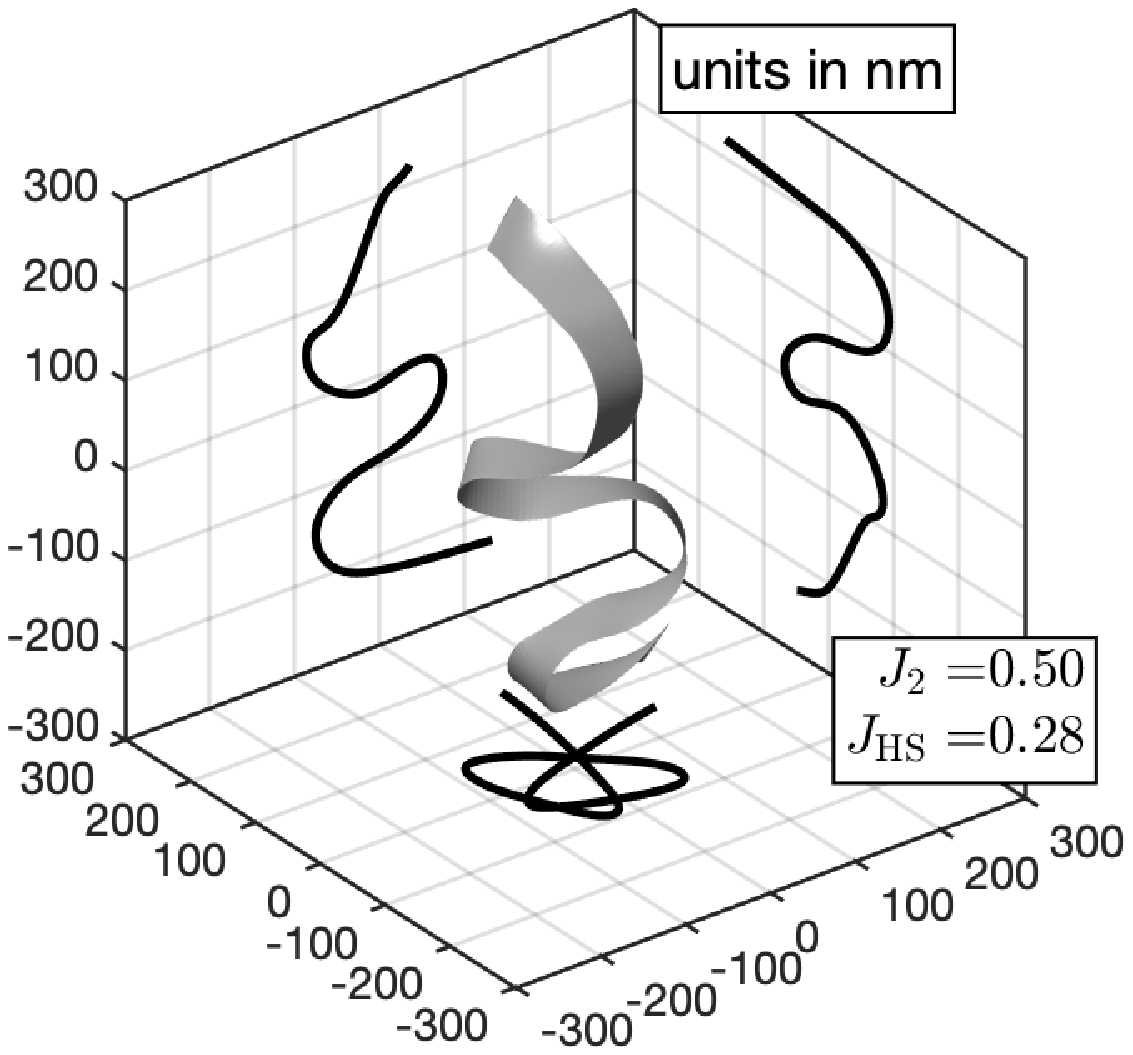}
    \includegraphics[height=3.78cm]{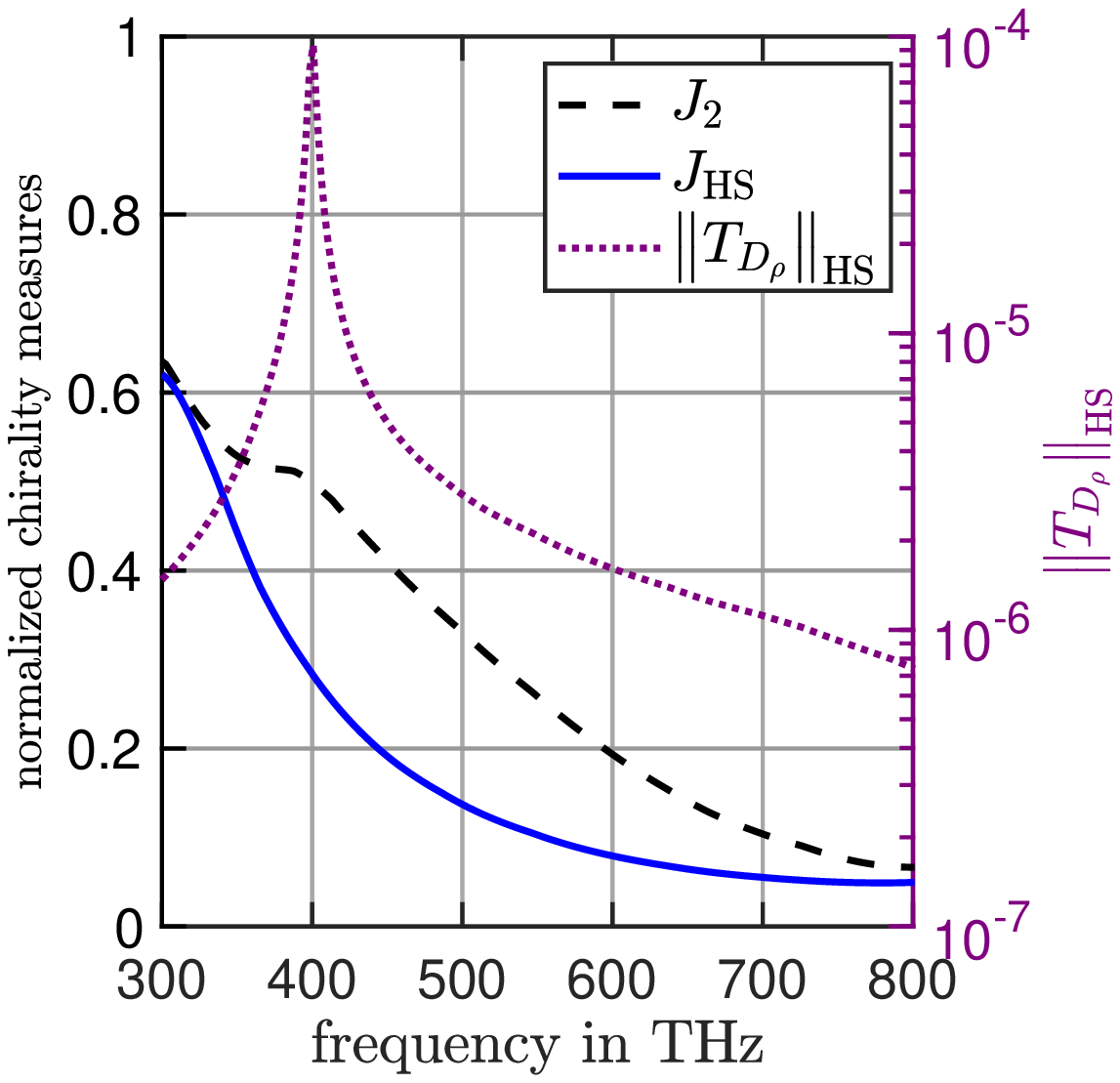} 
    \includegraphics[height=4.2cm]{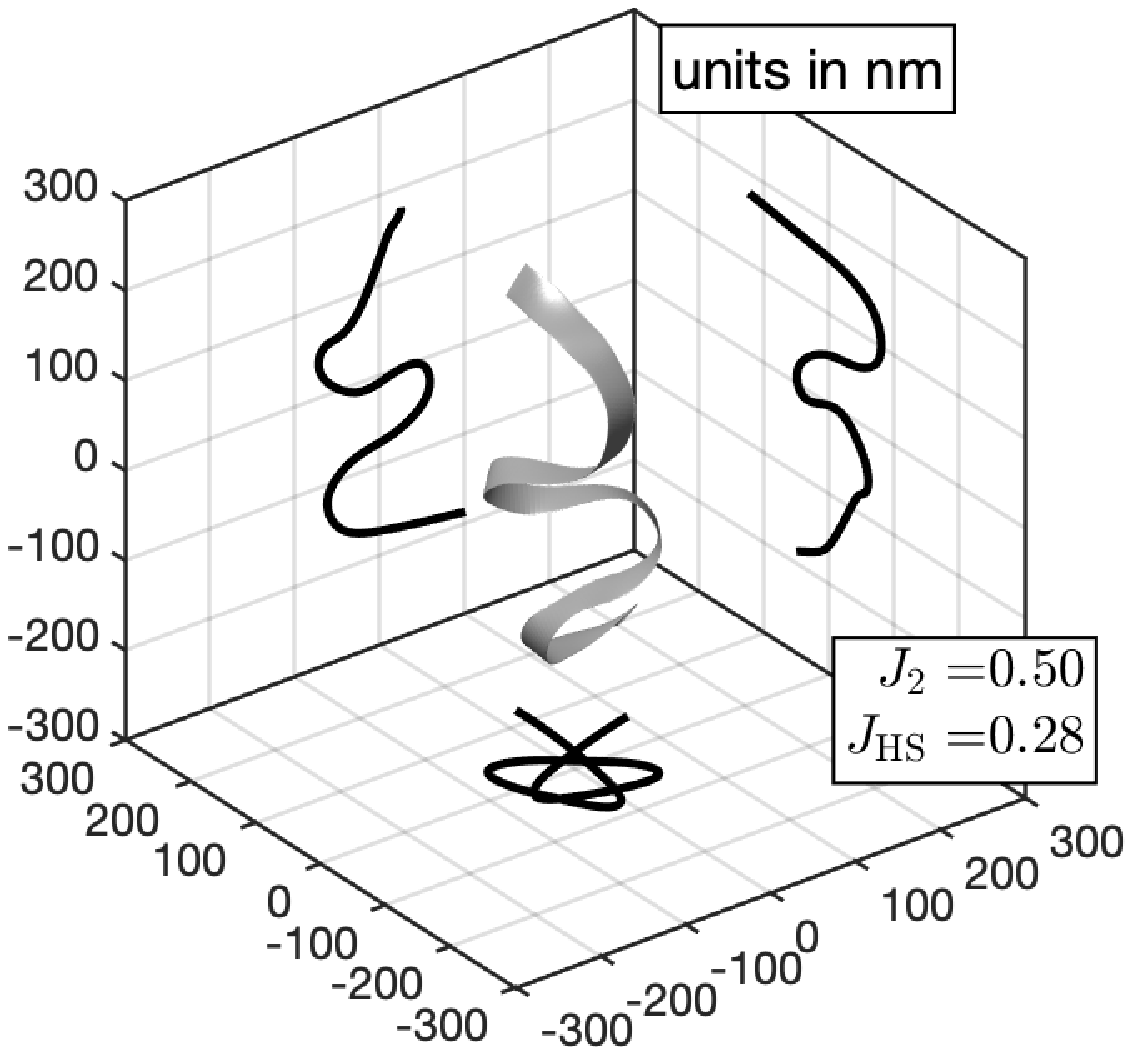}
    \includegraphics[height=3.78cm]{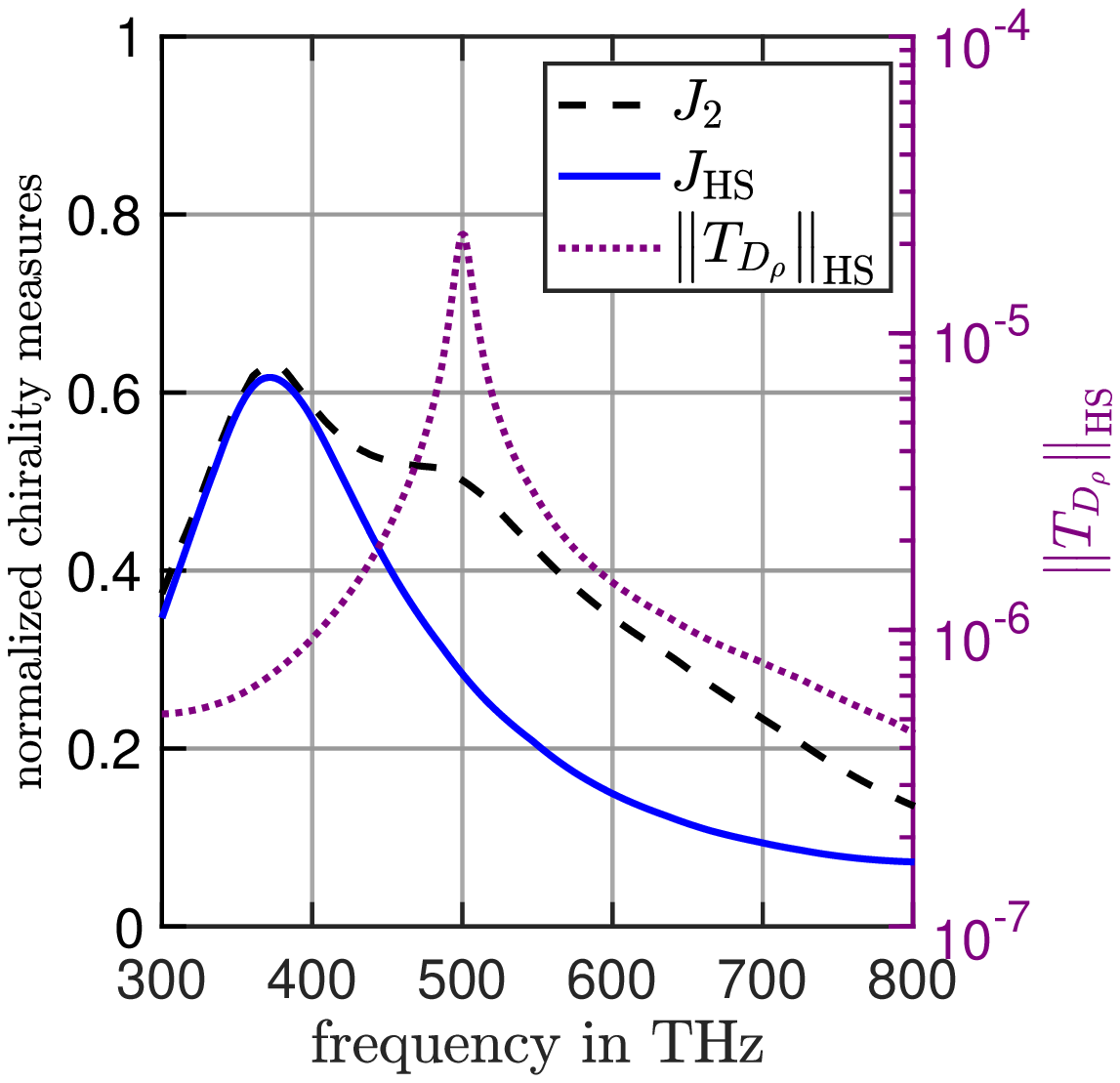}
    \includegraphics[height=4.2cm]{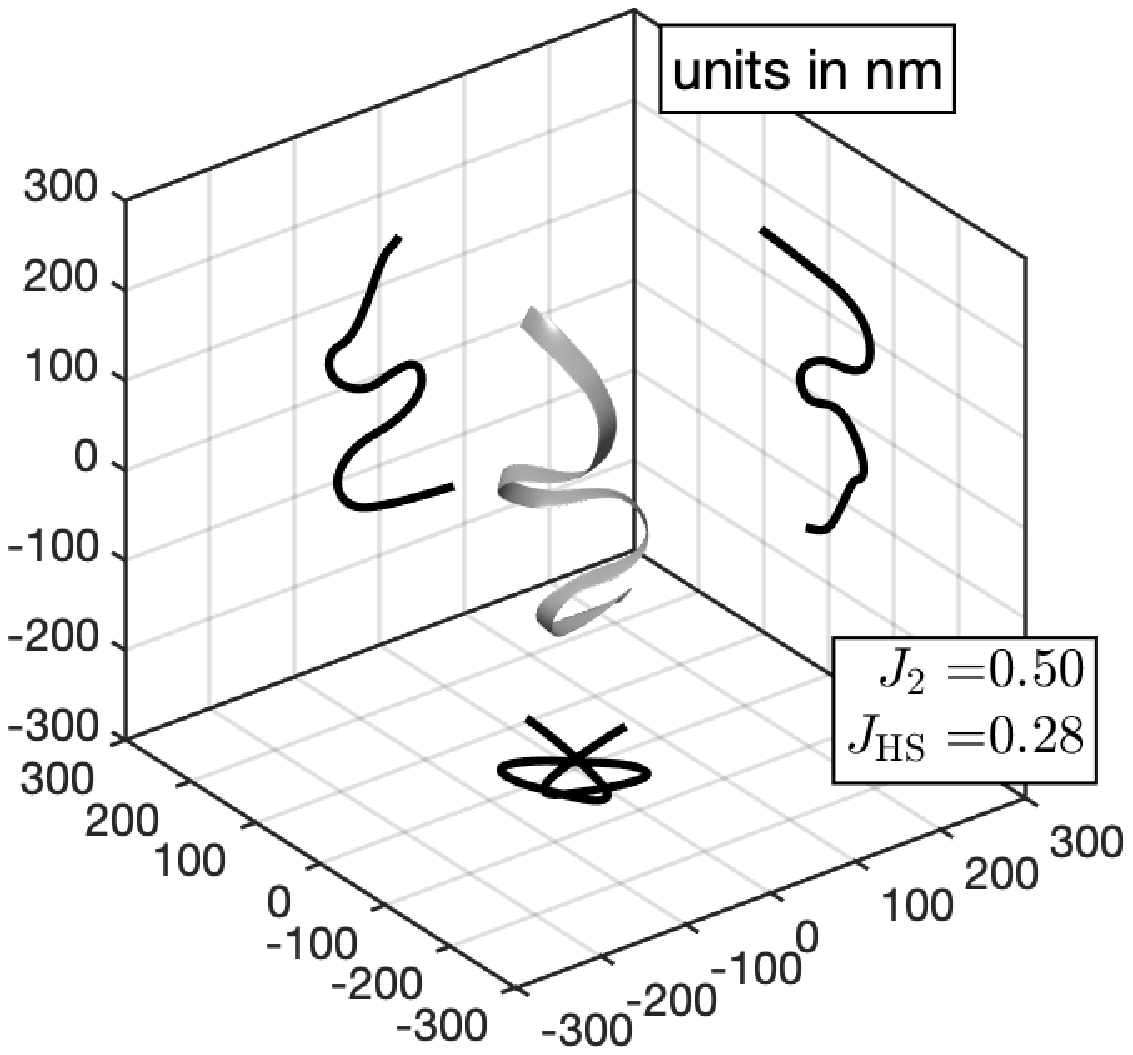}
    \includegraphics[height=3.78cm]{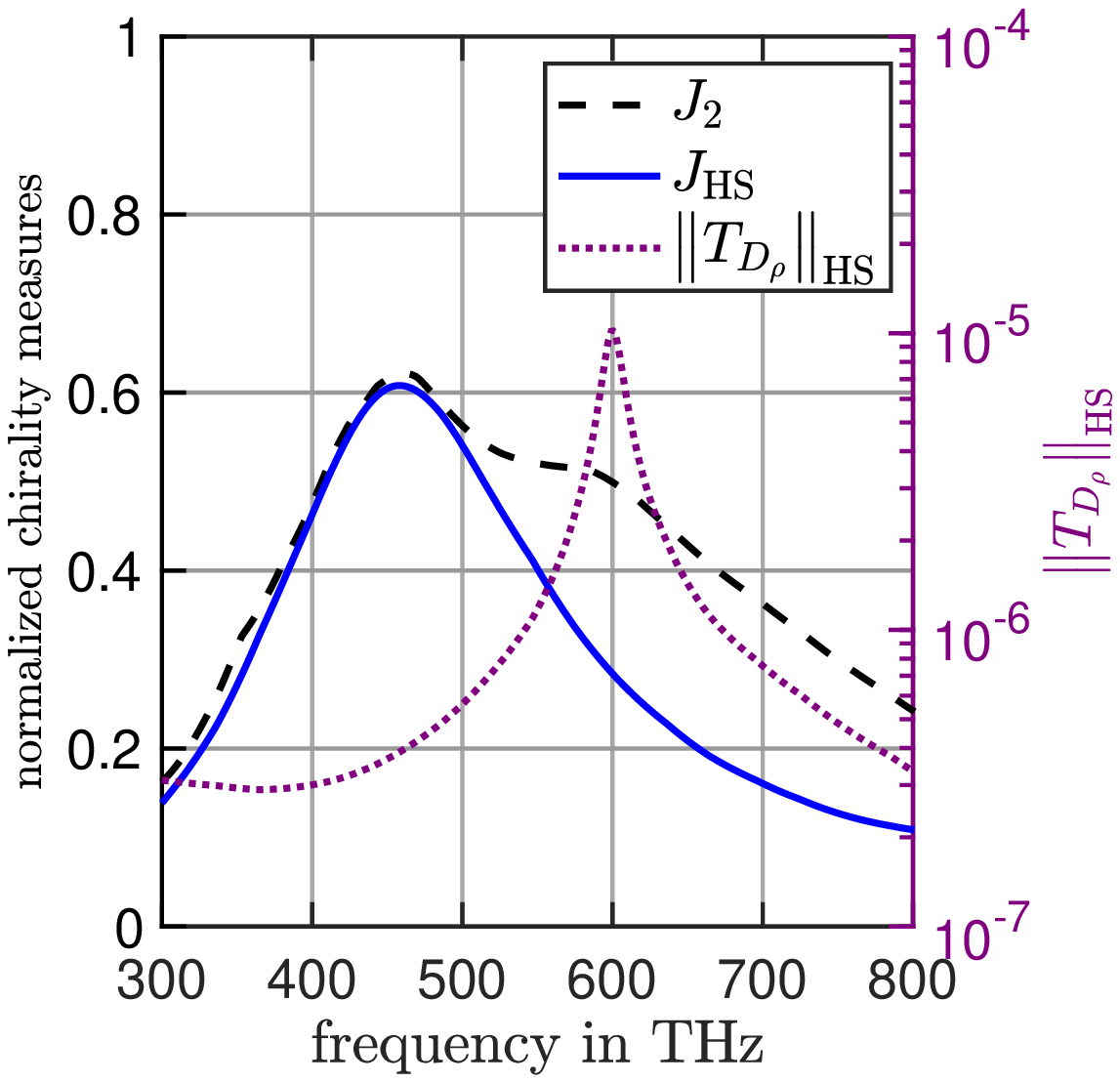}
    \includegraphics[height=4.2cm]{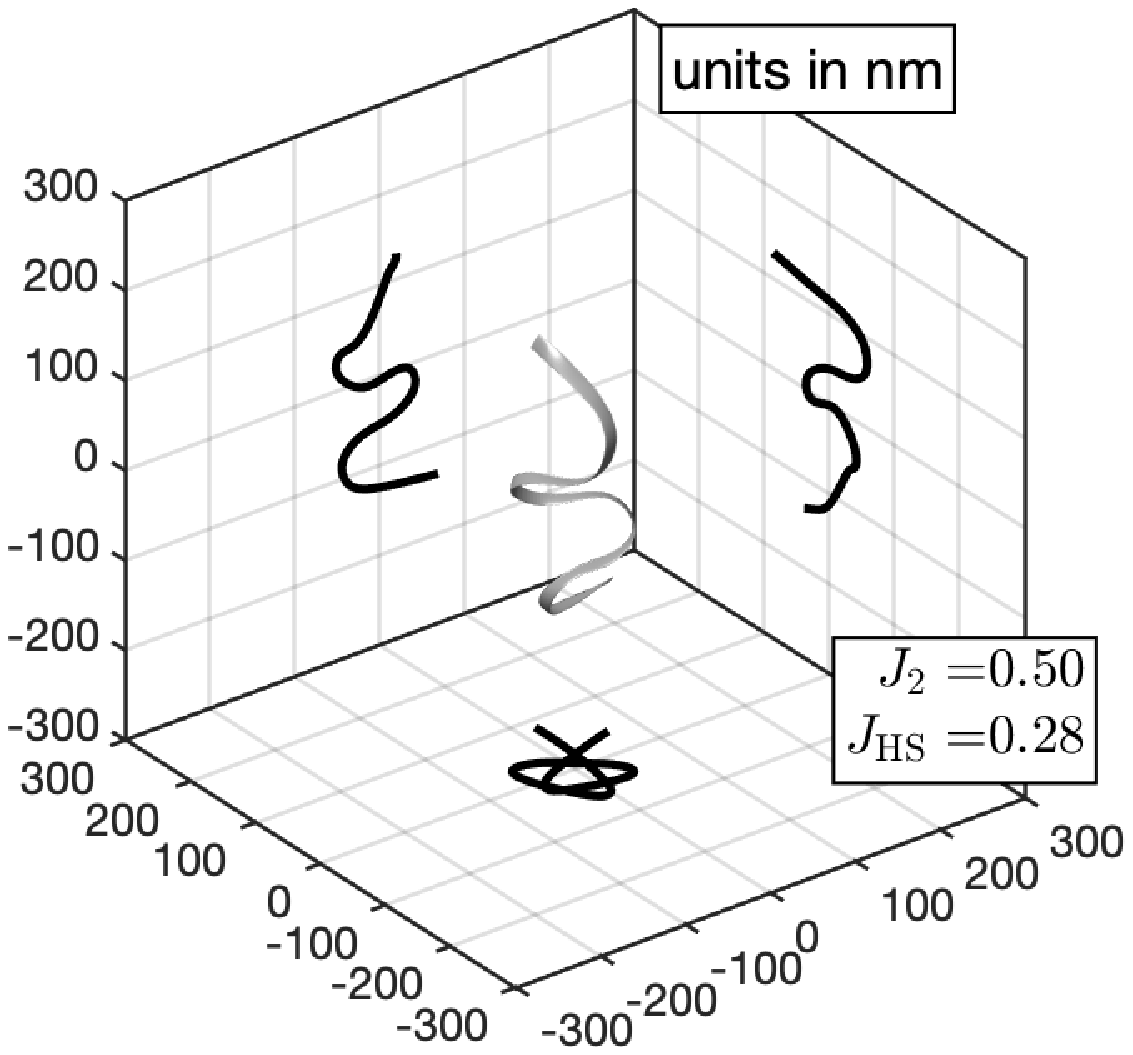}    
    \includegraphics[height=3.78cm]{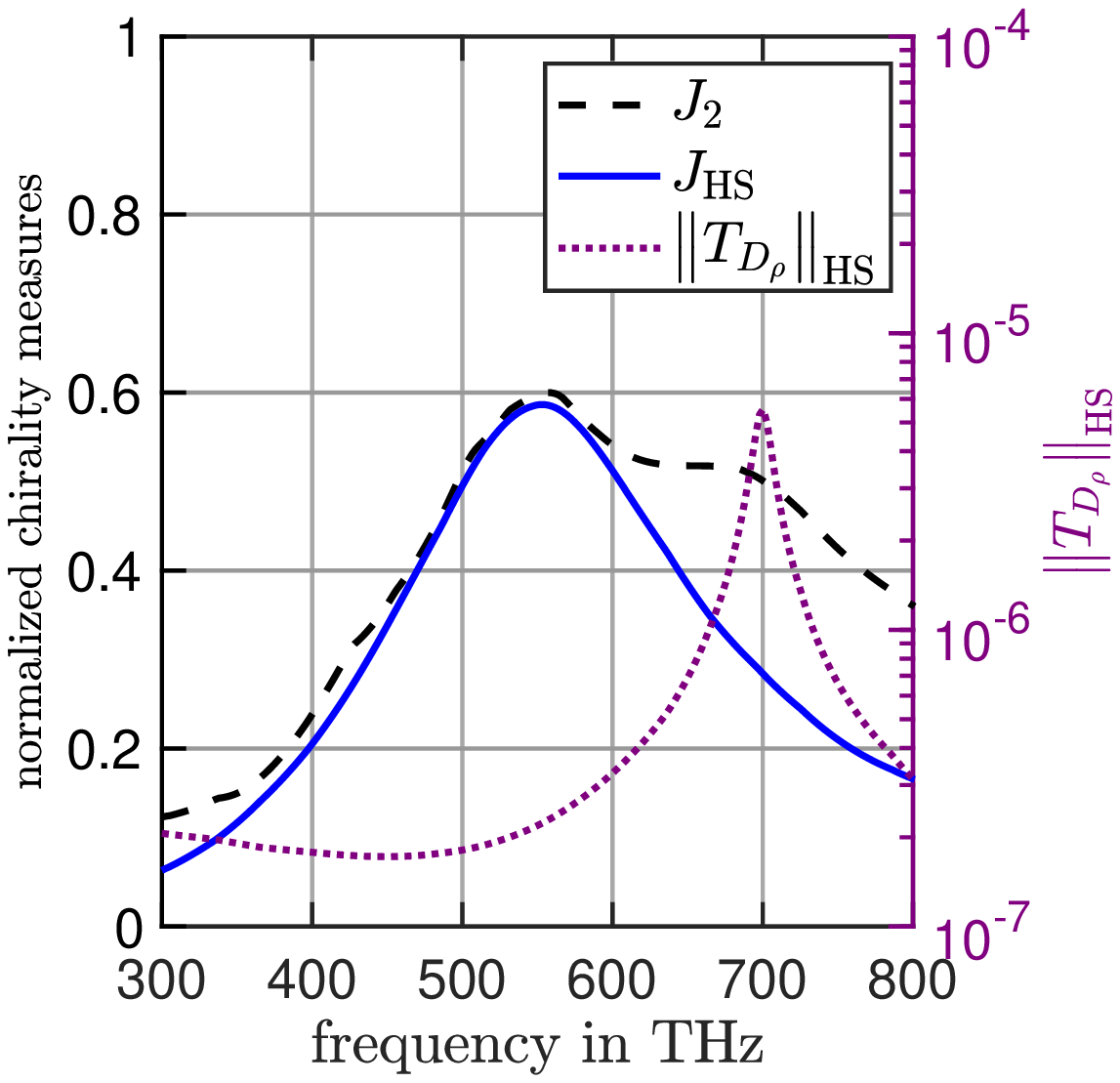} 
    \caption{Optimized silver nanowires and corresponding frequency
      scans from Example~\ref{exa:2} for
      $\fopt=400,500$~THz (top row) and
      $\fopt=600,700$~THz (bottom row).}
    \label{fig:Exa2-1}
  \end{figure}  

  In Figure~\ref{fig:Exa2-1} we show the optimized silver nanowires
  that have been obtained by the shape optimization for
  $\fopt=400,500,600$, and~$700$~THz. 
  The aspect ratios $b/a$ of the elliptical cross-sections are the
  same as in Example~\ref{exa:1} and vary between $26.94$ at
  $\fopt=400$~THz and $5.94$ at $\fopt=700$~THz.
  For a better three-dimensional impression, we also included the
  projections of the spine curves of the optimized nanowires on the
  three coordinate planes in these plots. 
  During the optimization the straight initial guess bends into
  a rather irregular shape that is difficult to interpret.
  However the optimized silver nanowires obtained at the four
  different frequencies have very similar shapes, which seem to be
  rescaled versions of each other with respect to the wave length.

  Figure~\ref{fig:Exa2-1} also contains plots illustrating the
  frequency dependence of the normalized em-chirality measures $\Jtwo$
  (dashed) and $\JHS$ (solid) as well as of the total interaction
  cross-section (dotted) of the optimized silver nanowires. 
  The maxima of the normalized em-chirality measures and the plasmonic
  resonances visible in the plots of the total interaction
  cross-section are rather localized.
  It is interesting to observe that, although the shape optimization
  has been carried out at the plasmonic resonance frequency, i.e.,
  $\fopt=\fres$, the maximum of the normalized em-chirality measures
  $\Jtwo$ and $\JHS$ is attained around $100$ to $150$~THz 
  below the plasmonic resonance frequency in all four examples.
  This is a common feature that we have observed in several other
  examples, and we will utilize this phenomenon in Example~\ref{exa:3}
  below to design thin silver and gold nanowires with even higher
  em-chirality measures.
  
  \begin{figure}
    \centering
    \includegraphics[height=4.2cm]{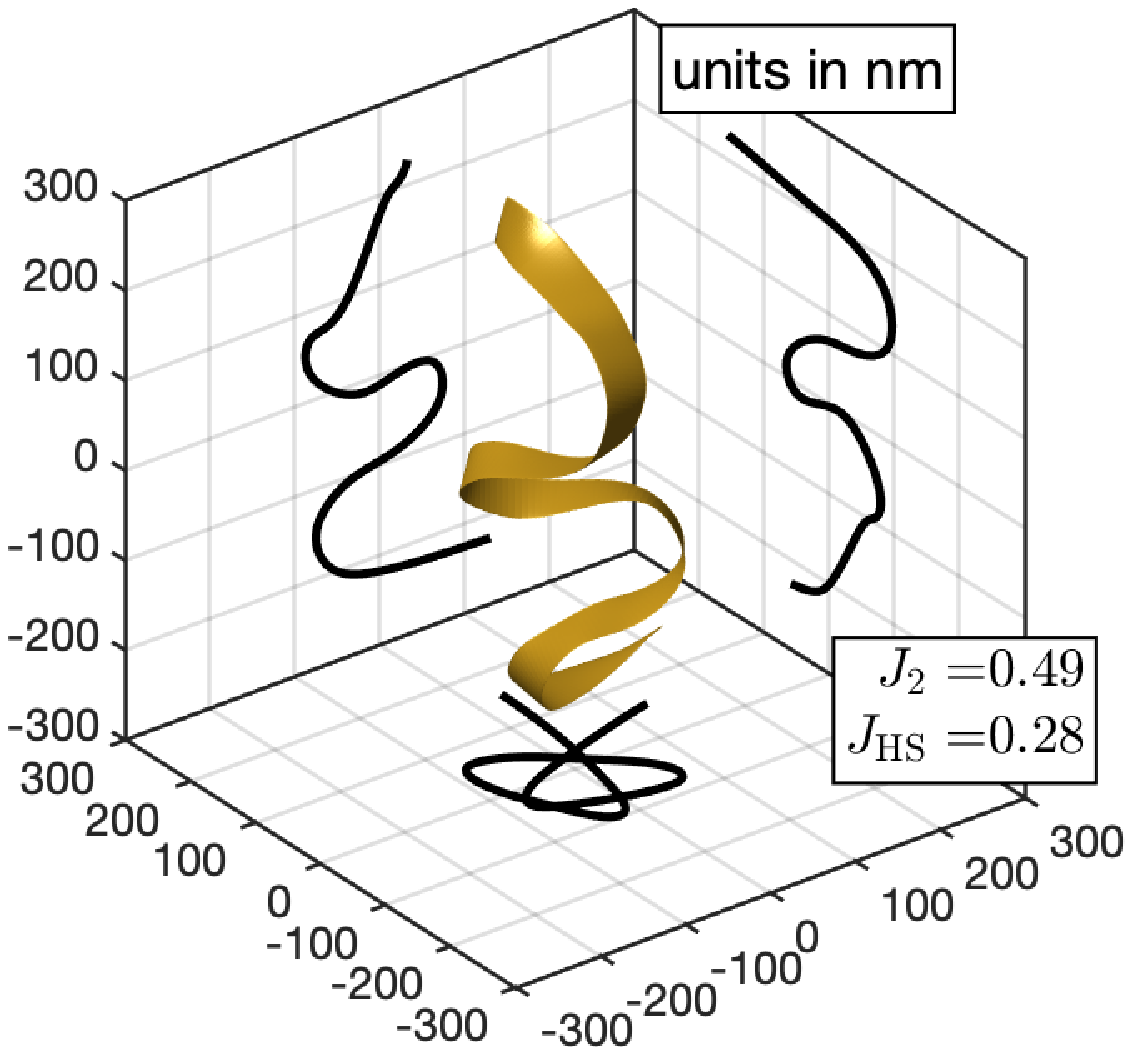}
    \includegraphics[height=3.78cm]{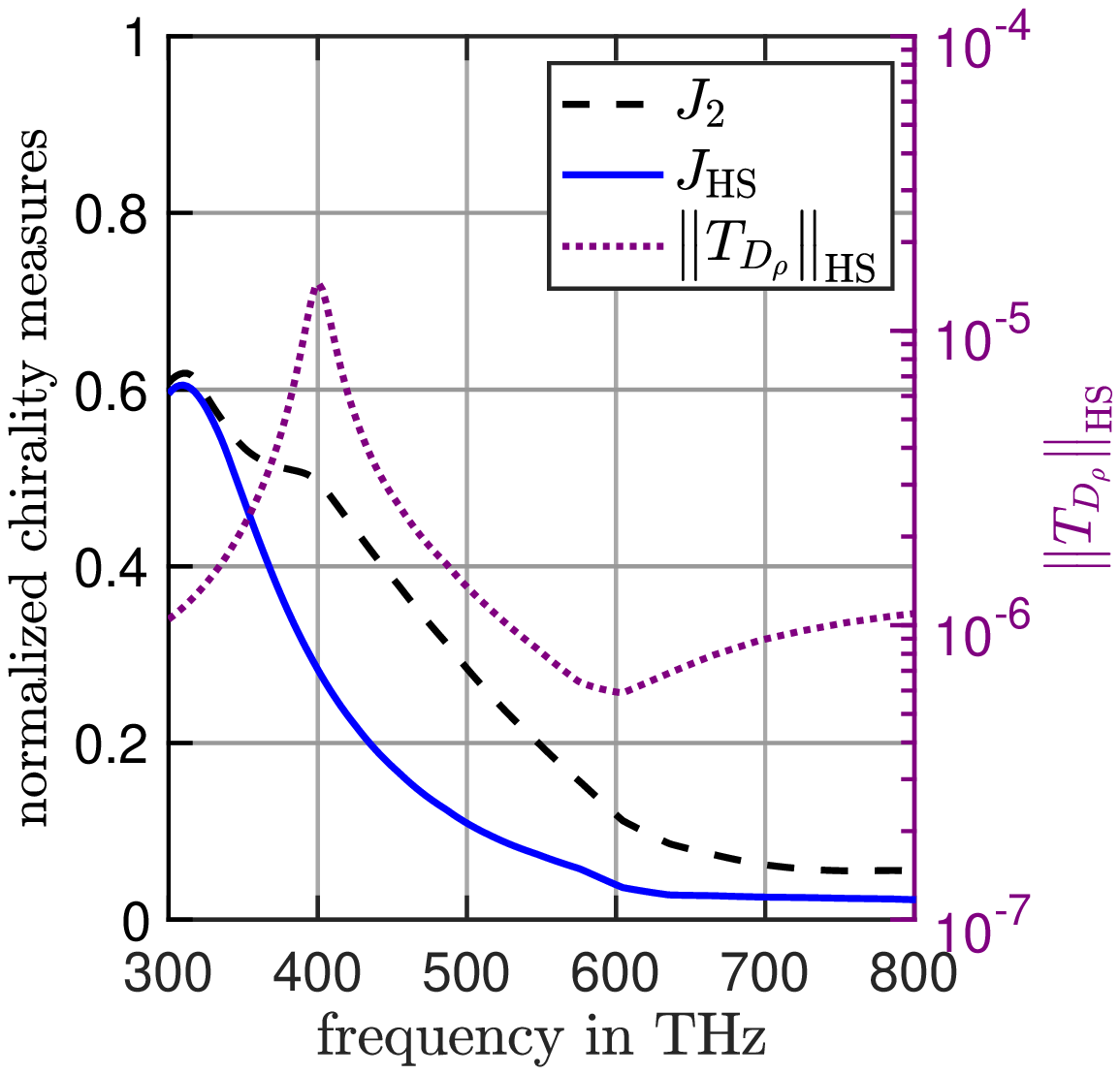}
    \includegraphics[height=4.2cm]{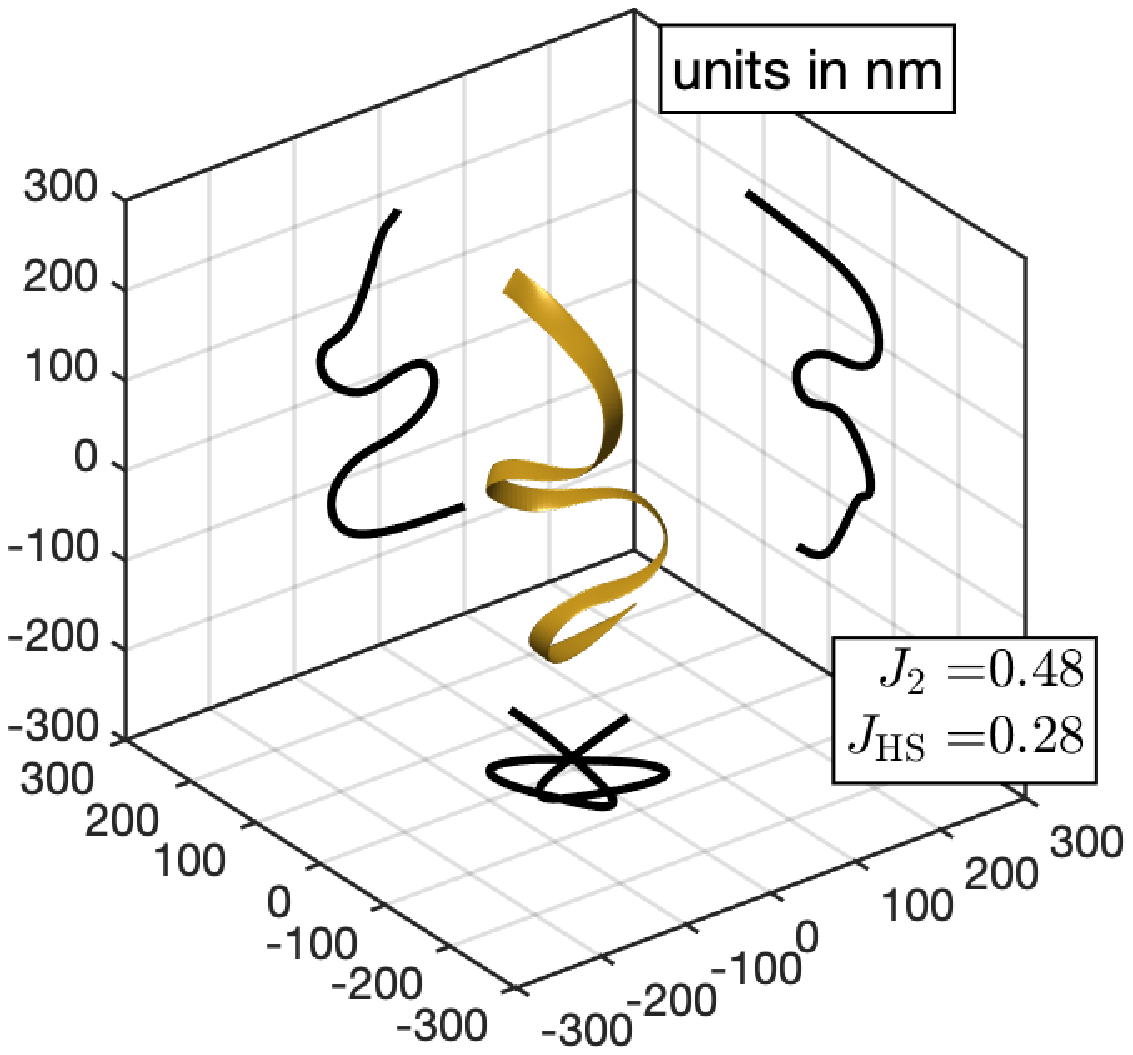}
    \includegraphics[height=3.78cm]{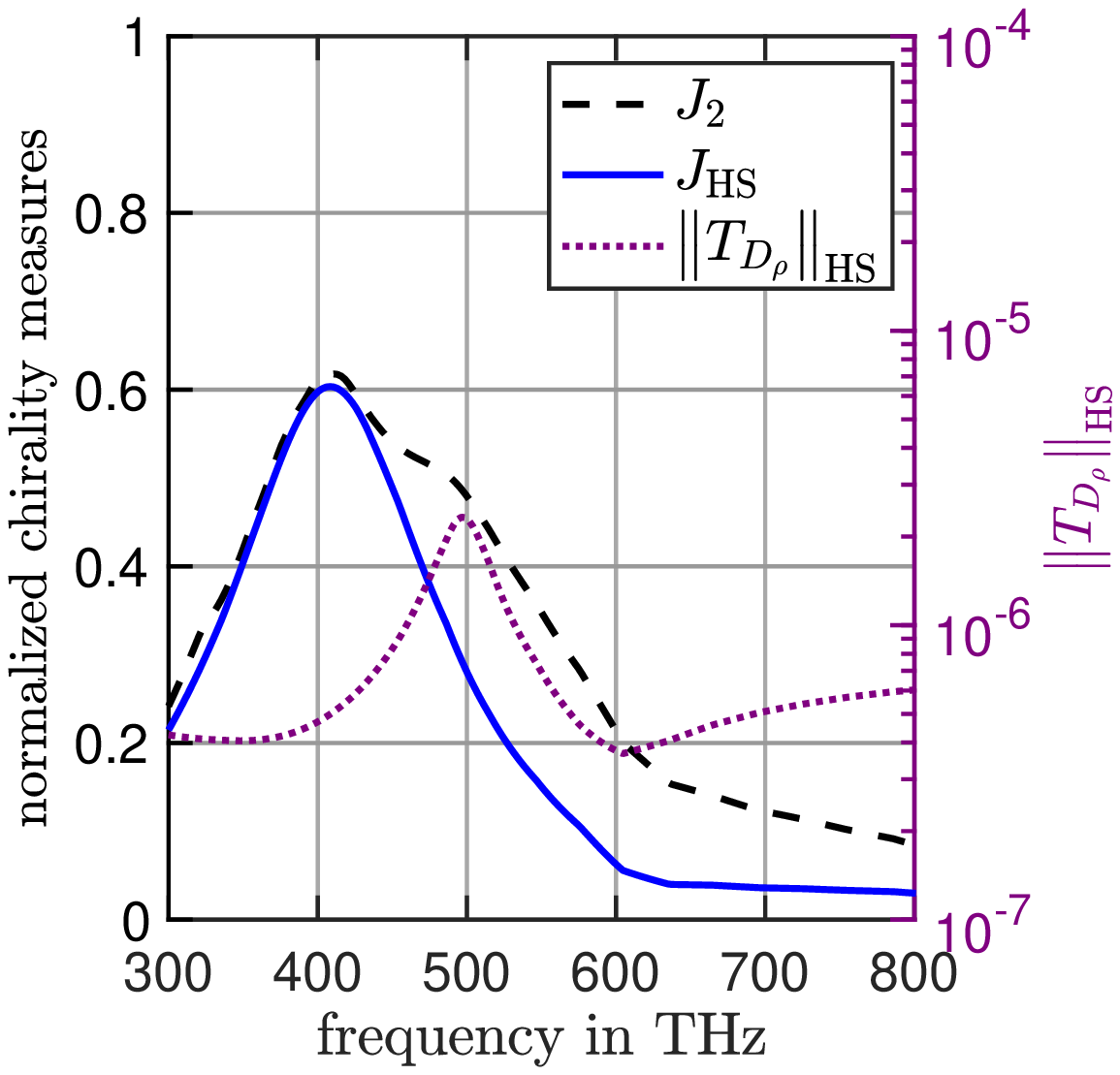}
    \includegraphics[height=4.2cm]{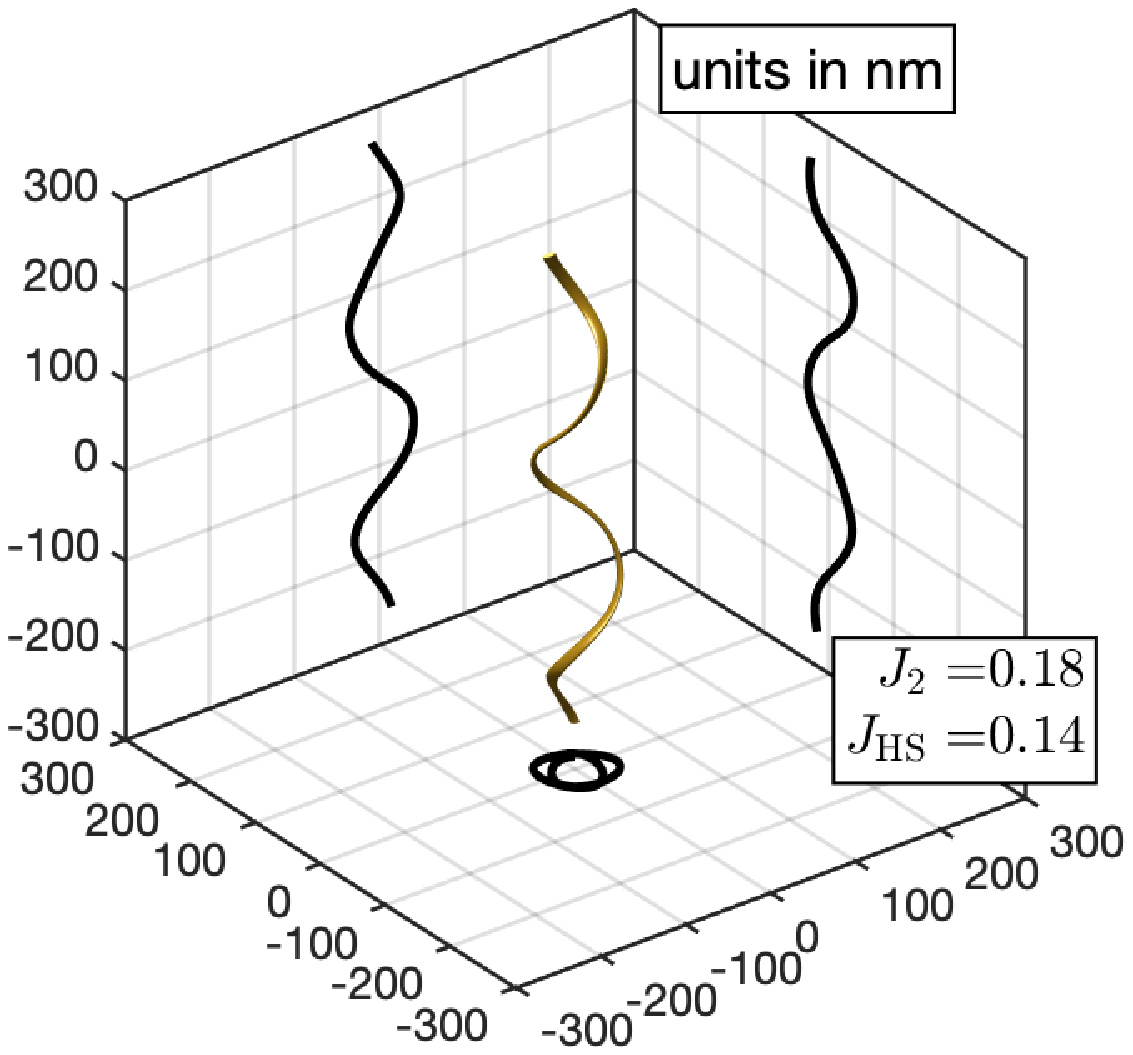}
    \includegraphics[height=3.78cm]{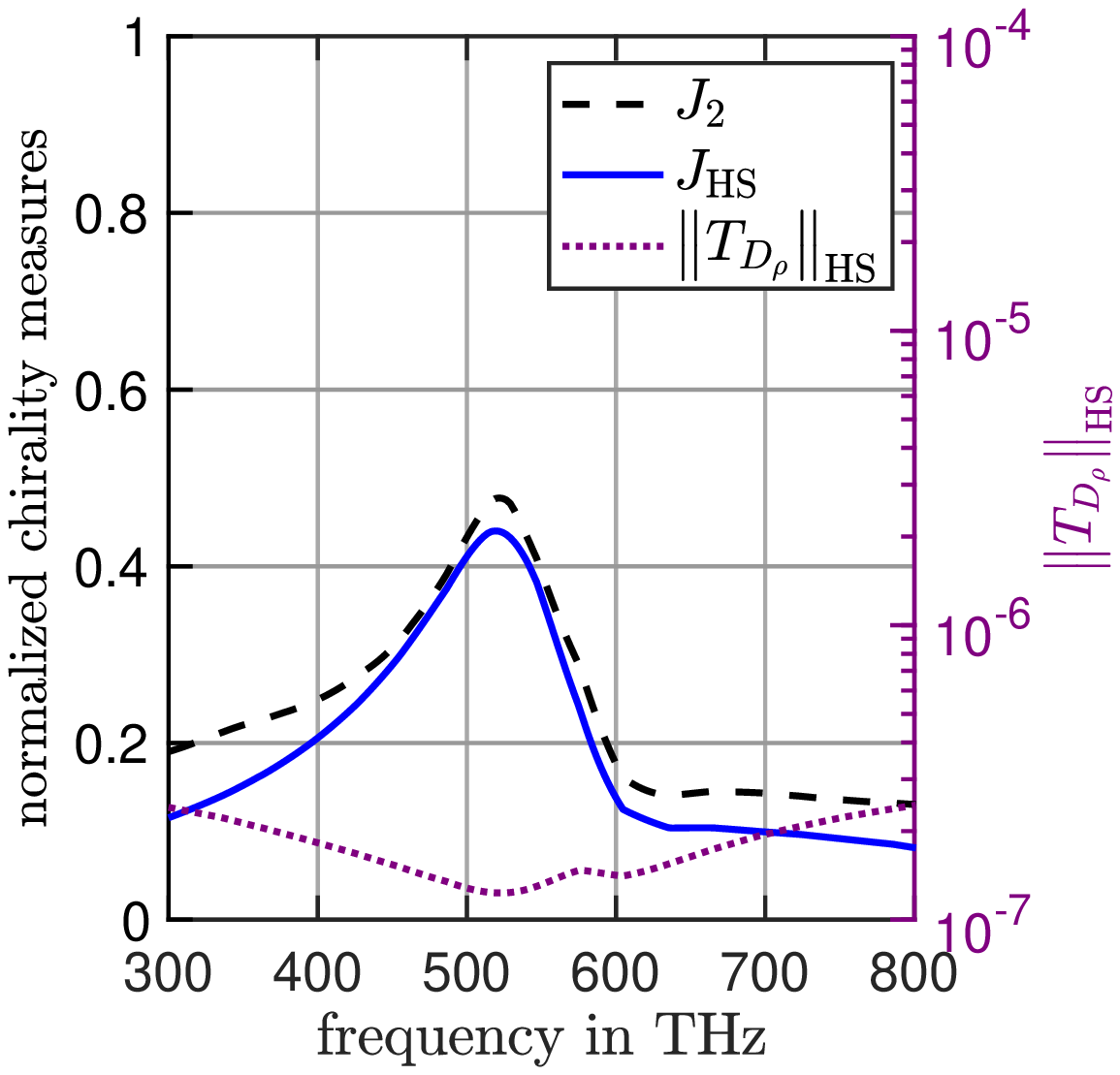}
    \includegraphics[height=4.2cm]{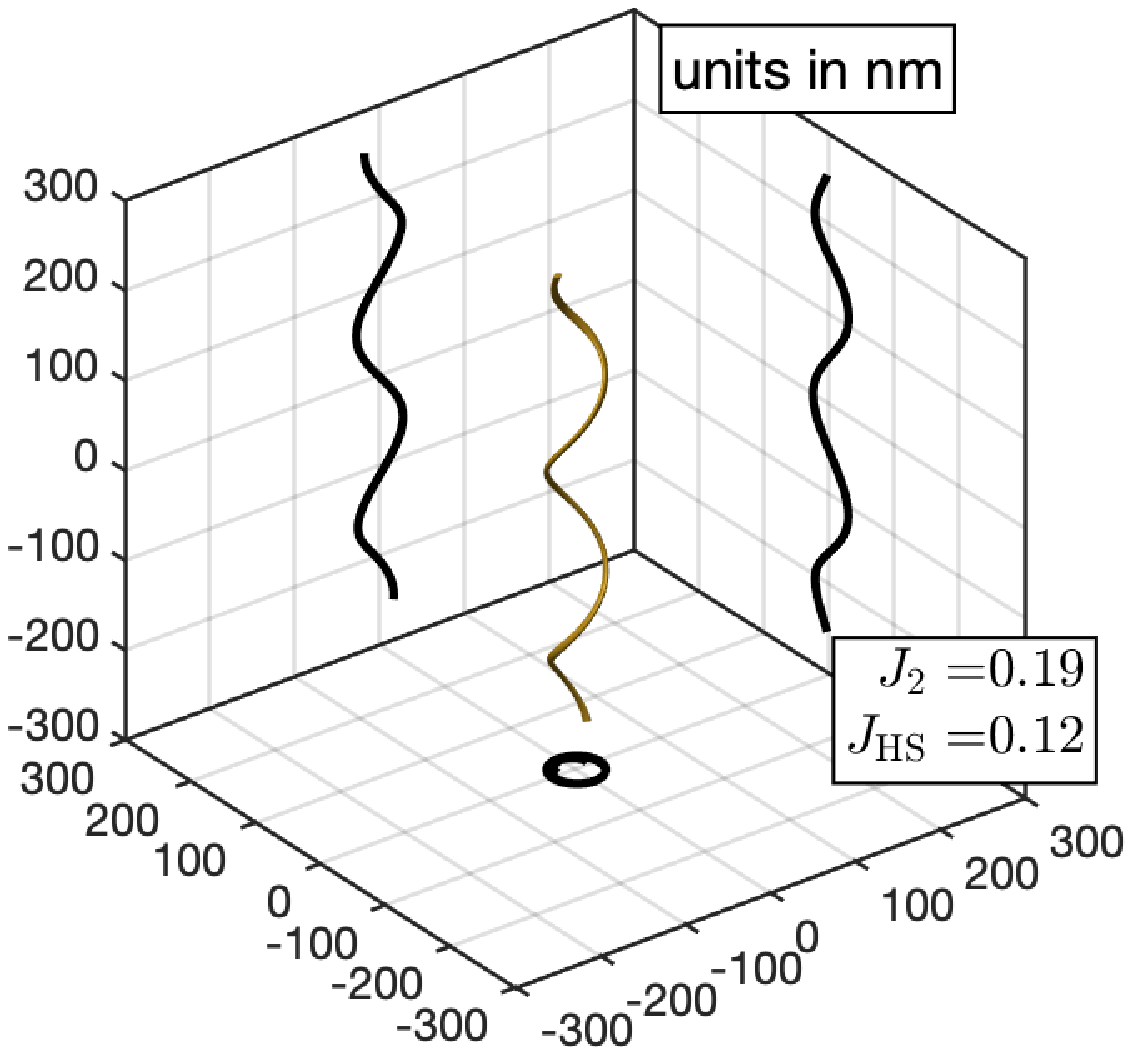}
    \includegraphics[height=3.78cm]{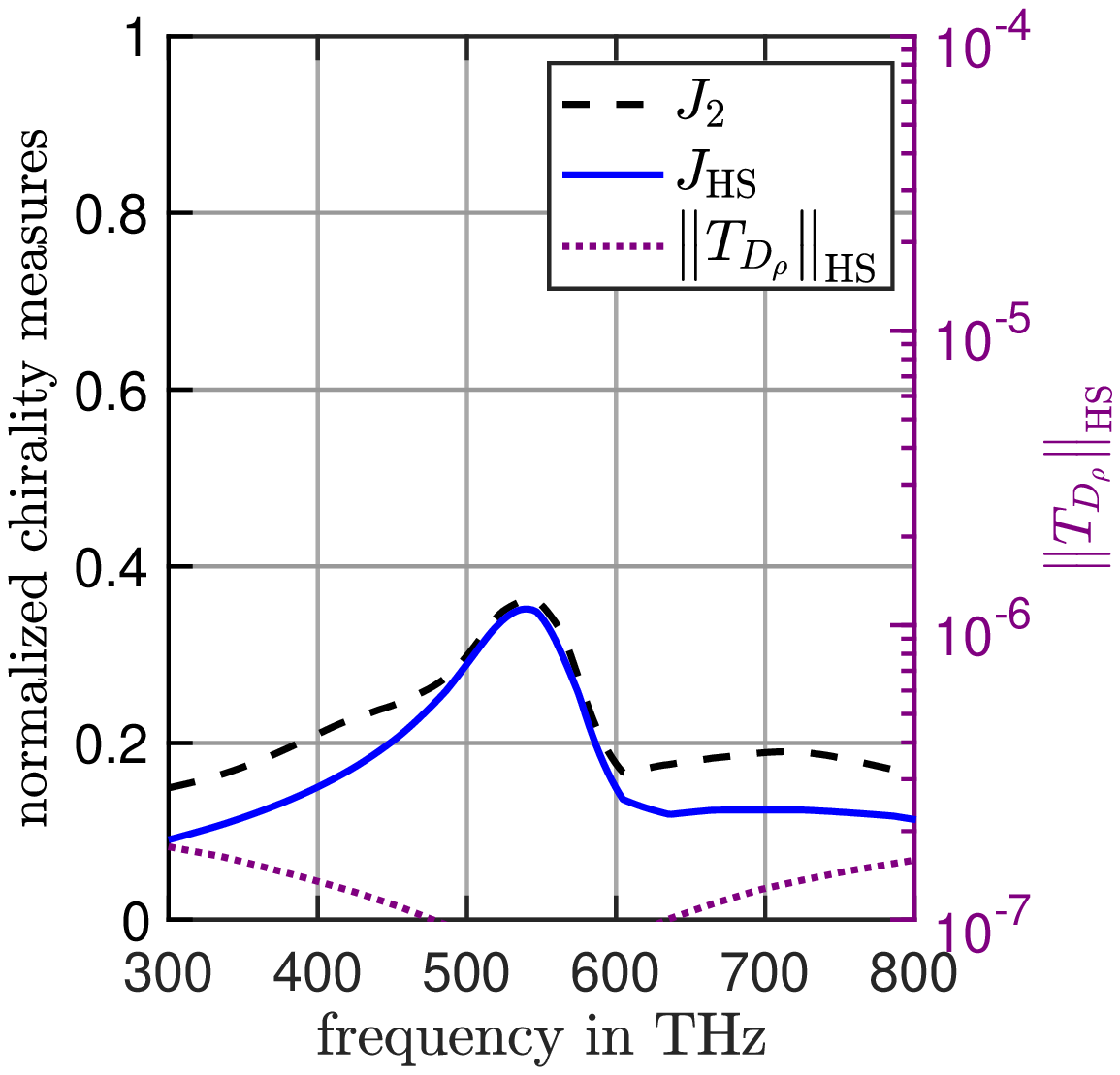}    
    \caption{Optimized gold nanowires and corresponding frequency
      scans from Example~\ref{exa:2} for 
      $\fopt=400,500$~THz (top row) and
      $\fopt=600,700$~THz (bottom row).}
    \label{fig:Exa2-2}
  \end{figure}

  In Figure~\ref{fig:Exa2-2} we show the corresponding results of the
  shape optimization of thin gold nanowires for~$\fopt=400,500,600$,
  and~$700$~THz. 
  The shapes of the gold nanowires that have been optimized
  at~$\fopt=400,500$~THz are similar to those of the optimized thin 
  silver nanowires in Figure~\ref{fig:Exa2-1}.
  Also the frequency scans in Figure~\ref{fig:Exa2-2} show a similar
  behavior, although the plasmonic resonance is not as pronounced as
  for the silver nanowires. 
  For~$\fopt=600,700$~THz the results are different.
  The shapes of the optimized gold nanowires look similar to those 
  obtained for dielectric nanowires in \cite{AreGriKno21}. 
  The optimized gold nanowires are helices, and no plasmonic
  resonances are visible in the plots of the total interaction
  cross-sections.
  This different behavior results from the larger imaginary part
  of the electric permittivity of gold at $\fopt=600,700$~THz (see
  Table~\ref{tab:MaterialParameters}).~\hfill$\lozenge$
\end{example}

\begin{example}[Optimizing the twist rate of the cross-section and the
  shape of the spine curve of the nanowire] 
  \label{exa:3}
  The goal of our final example is to design thin silver and gold
  nanowires with elliptical cross-sections that possess normalized
  em-chirality measures $\Jtwo$ and~$\JHS$ as close to $1$ as possible at
  optical frequencies.  
  We consider a free-form shape optimization for the spine curve of
  the nanowire, and we optimize the twisting of the elliptical
  cross-section of the nanowire along the spine curve.  
  As in the previous examples, we discuss four different frequencies
  $\fopt=400,500,600$, and $700$~THz.
  For each of these frequencies we choose the aspect ratio of the
  elliptical cross-section of the nanowire such that its plasmonic
  resonance frequency $\fres$ is around $100$ to $150$~Thz above the
  frequency $\fopt$ that is used in the shape optimization, i.e., we
  use $b/a=12.5$ at $\fopt=400$~THz, $b/a=7.14$ at $\fopt=500$~THz,
  $b/a=3.85$ at $\fopt=600$~THz, and~$b/a=1.92$ at $\fopt=700$~THz.
  In particular $\fopt\not=\fres$. 
  This is different from the previous two examples, where we optimized
  the shape of the nanowires directly at the plasmonic resonance
  frequency.
  It is motivated by our observations at the end of
  Example~\ref{exa:2}. 
  We apply the optimization scheme from
  Section~\ref{sec:ShapeOptimization}. 
  For the regularization parameters in \eqref{eq:DefPhi} we choose
  $\alpha_1=5$, $\alpha_2=8\times 10^{-3}$ and
  $\alpha_3=1\times 10^{-6}$.

  The outcome of the shape optimization strongly depends on the
  initial guess for the spine curve.
  Thus, we consider in this example $100$ different initial spine
  curves for the optimization scheme at each frequency. 
  These are helices with four turns, where the total height and
  the radius of the helix are chosen randomly in~$[0, 2\lambdaopt/3]$ and
  in~$[0, \lambdaopt/2]$, respectively.
  As before, $\lambdaopt$ denotes the wave length at the frequency~$\fopt$. 
  We also add different random twists to these initial guesses.
  We use cubic not-a-knot splines with $n=40$ knots to parametrize the
  spine curve and the twist function, and $21$ quadrature points on
  each spline segment to discretize line integrals over~$\Gamma$.
  We choose the maximal degree $N$ of circularly polarized vector
  spherical harmonics that is used in the discretization of the
  operator~$\bfTrho(\bfp,\Vptheta)$ and of its Fr\'echet derivative
  $\bfTrho'[\bfp,\Vptheta](\bfh,\phi)$ (see
  Remark~\ref{rem:DiscretisationT}) to be~$N=5$. 
  This gives $100$ different optimized silver and gold nanowires for
  each frequency $\fopt$, and we finally select those (for each
  frequency $\fopt$) that attain the highest normalized em-chirality
  measures. 

  % A similar investigation together with the condition that
  % $k^2ab \leq 0.05^2$ suggests $a,b$ to be chosen in the silver case as
  % $a\approx 20.28\mathrm{nm}$, $b\approx 1.62\mathrm{nm}$ for $\fTHz
  % =400$,  $a\approx 12.41\mathrm{nm}$, $b\approx 1.74\mathrm{nm}$ for
  % $\fTHz =500$,  $a\approx 7.79\mathrm{nm}$, $b\approx 2.03\mathrm{nm}$
  % for $\fTHz =600$ and  $a\approx 4.64\mathrm{nm}$, $b\approx 2.41
  % \mathrm{nm}$ for $\fTHz =700$. 
  % In the gold case one deduces that $a\approx 17.89\mathrm{nm}$,
  % $b\approx 1.97\mathrm{nm}$ for $\fTHz =400$ and  $a\approx
  % 9.16\mathrm{nm}$, $b\approx 2.47\mathrm{nm}$ for $\fTHz =500$.

  \begin{figure}[thp]
    \includegraphics[height=4.2cm]{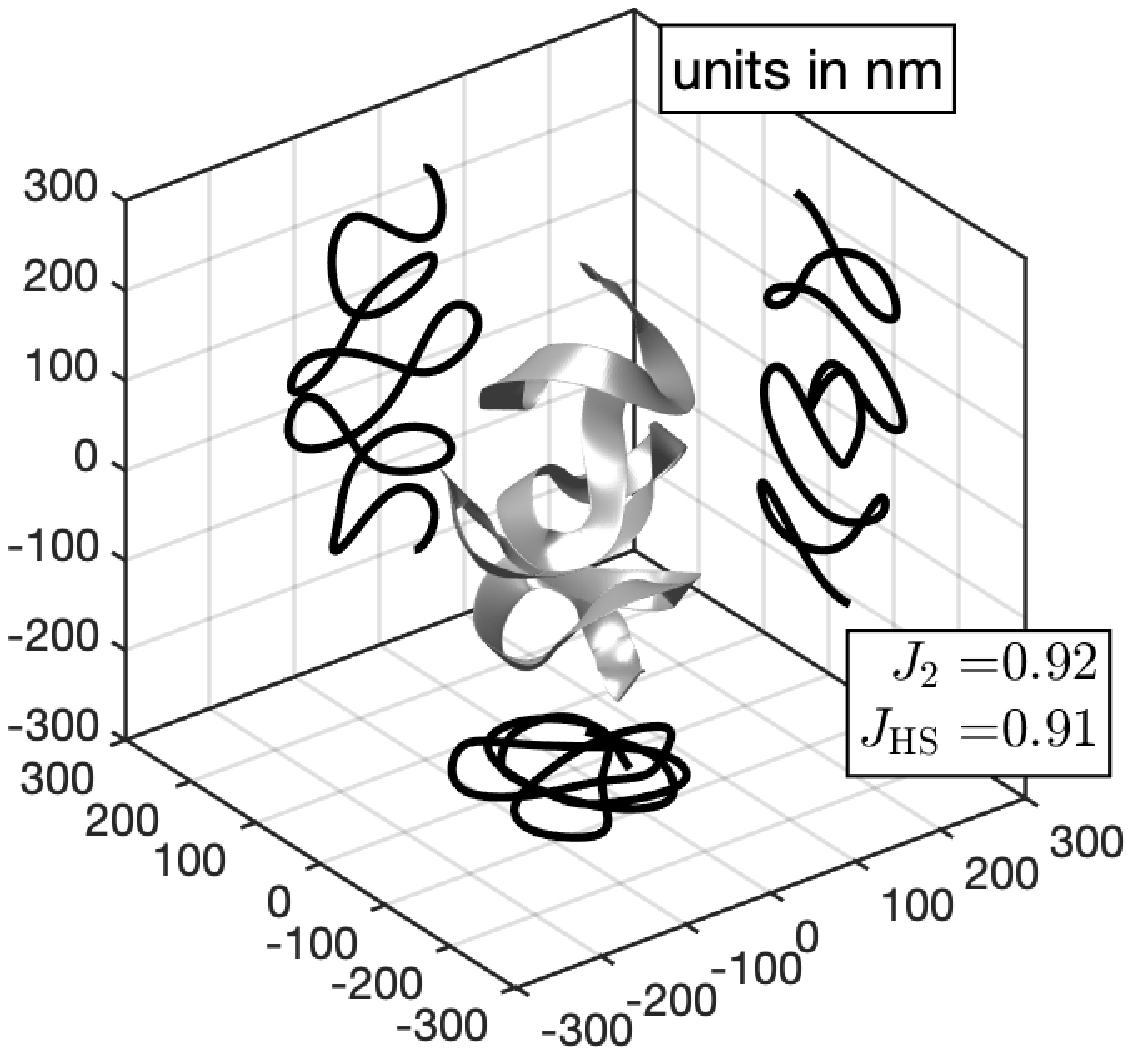}  
    \includegraphics[height=3.78cm]{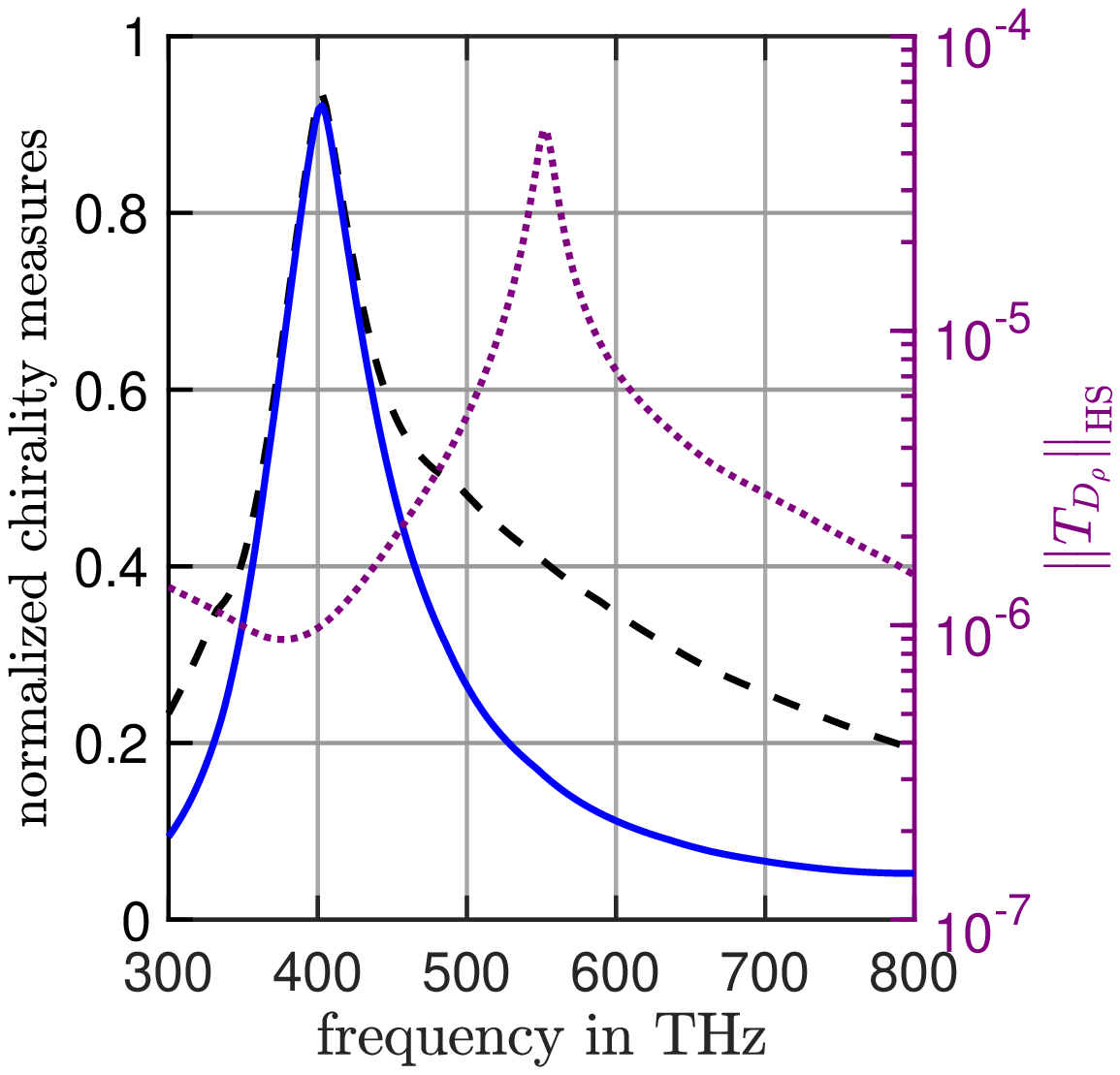} 
    \includegraphics[height=4.2cm]{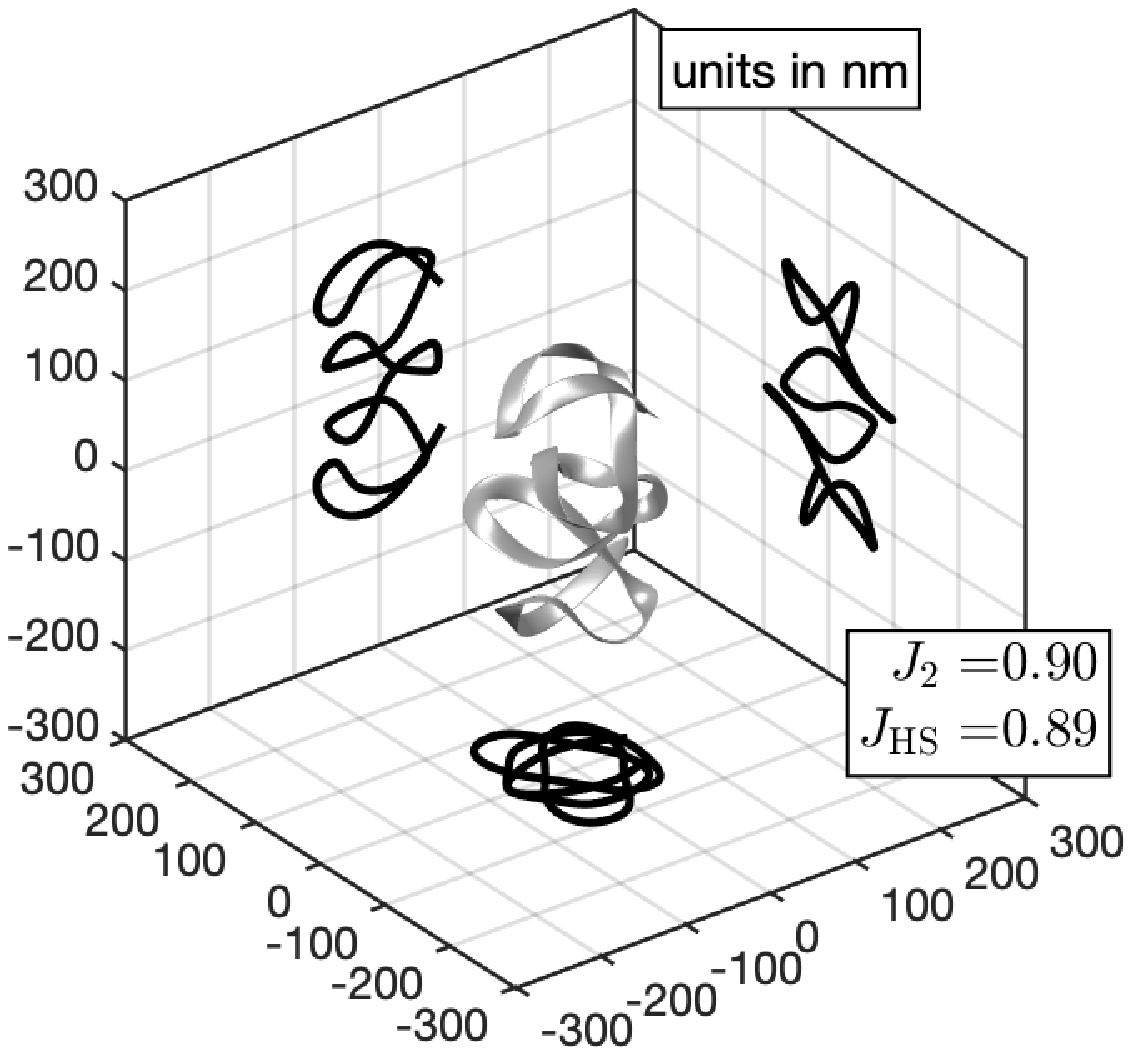} 
    \includegraphics[height=3.78cm]{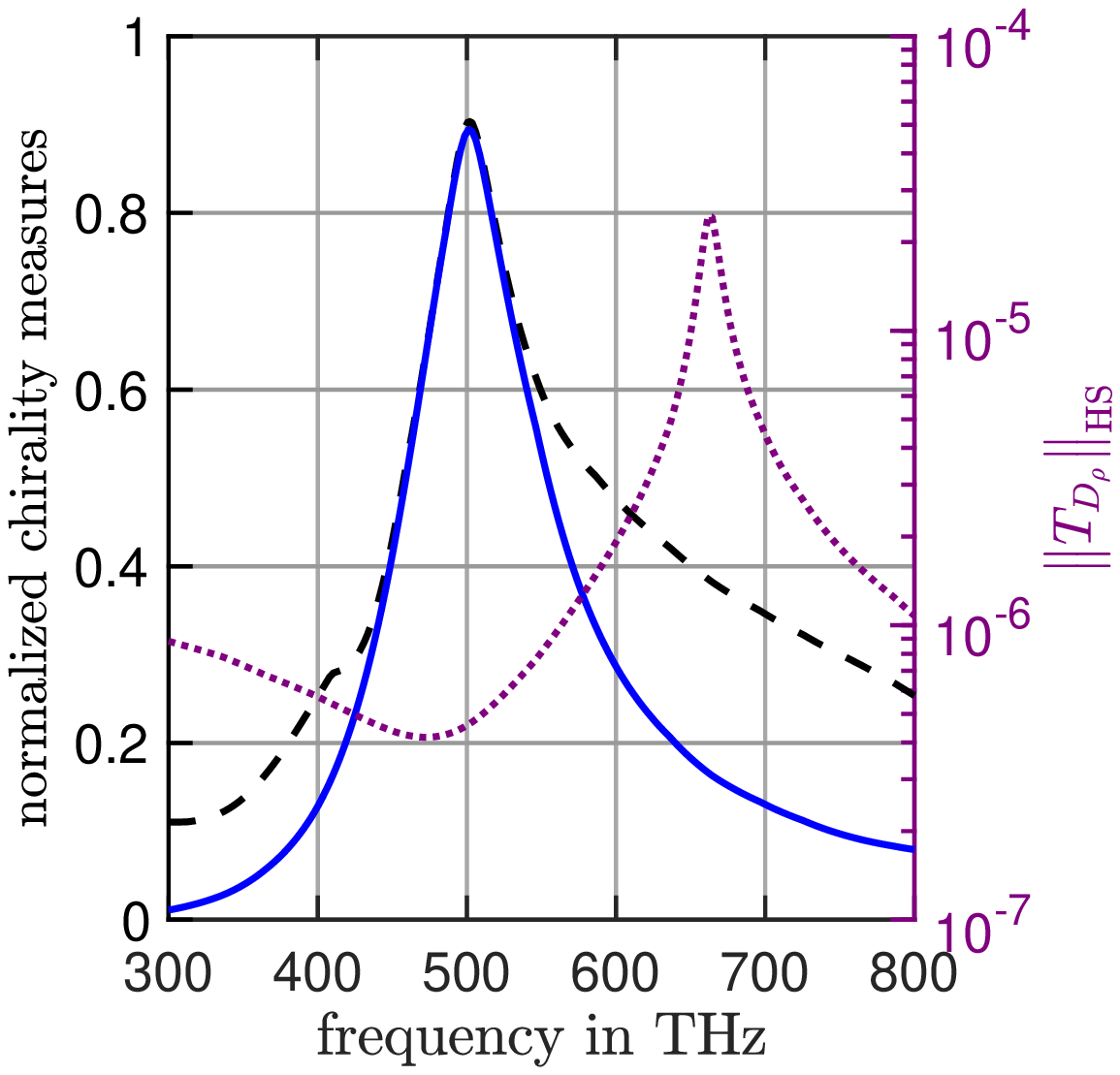} \\
    \includegraphics[height=4.2cm]{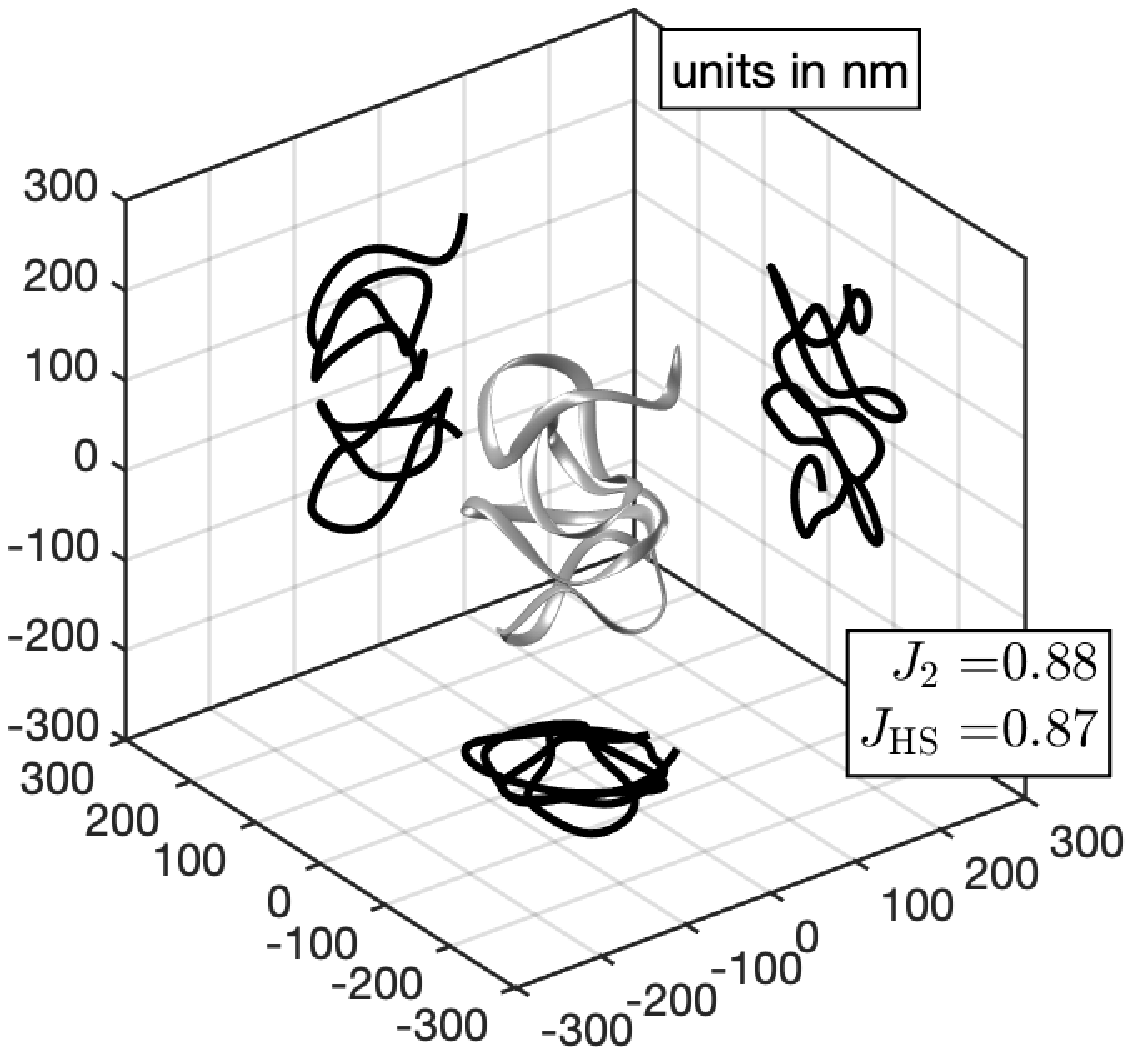} 
    \includegraphics[height=3.78cm]{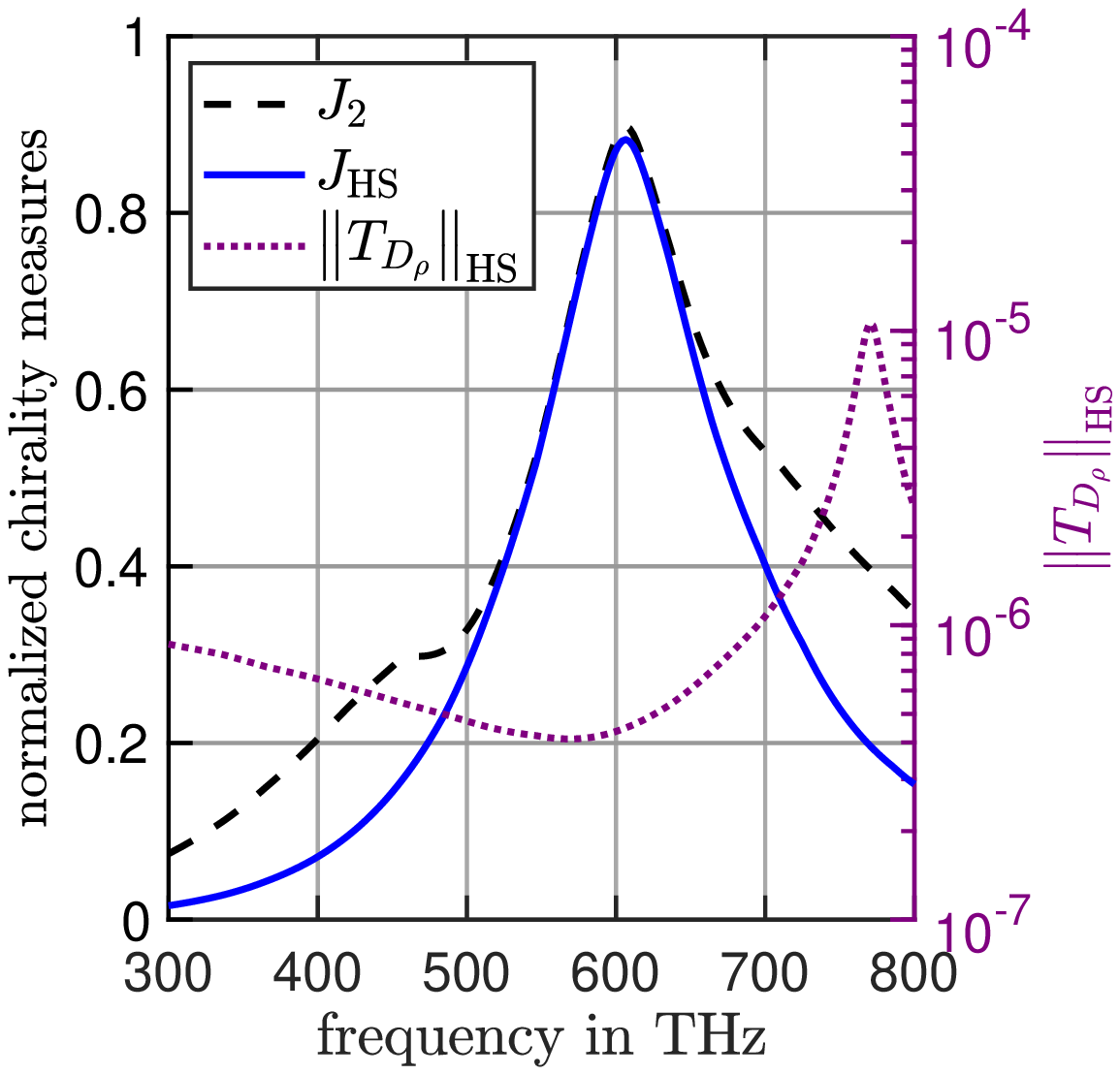} 
    \includegraphics[height=4.2cm]{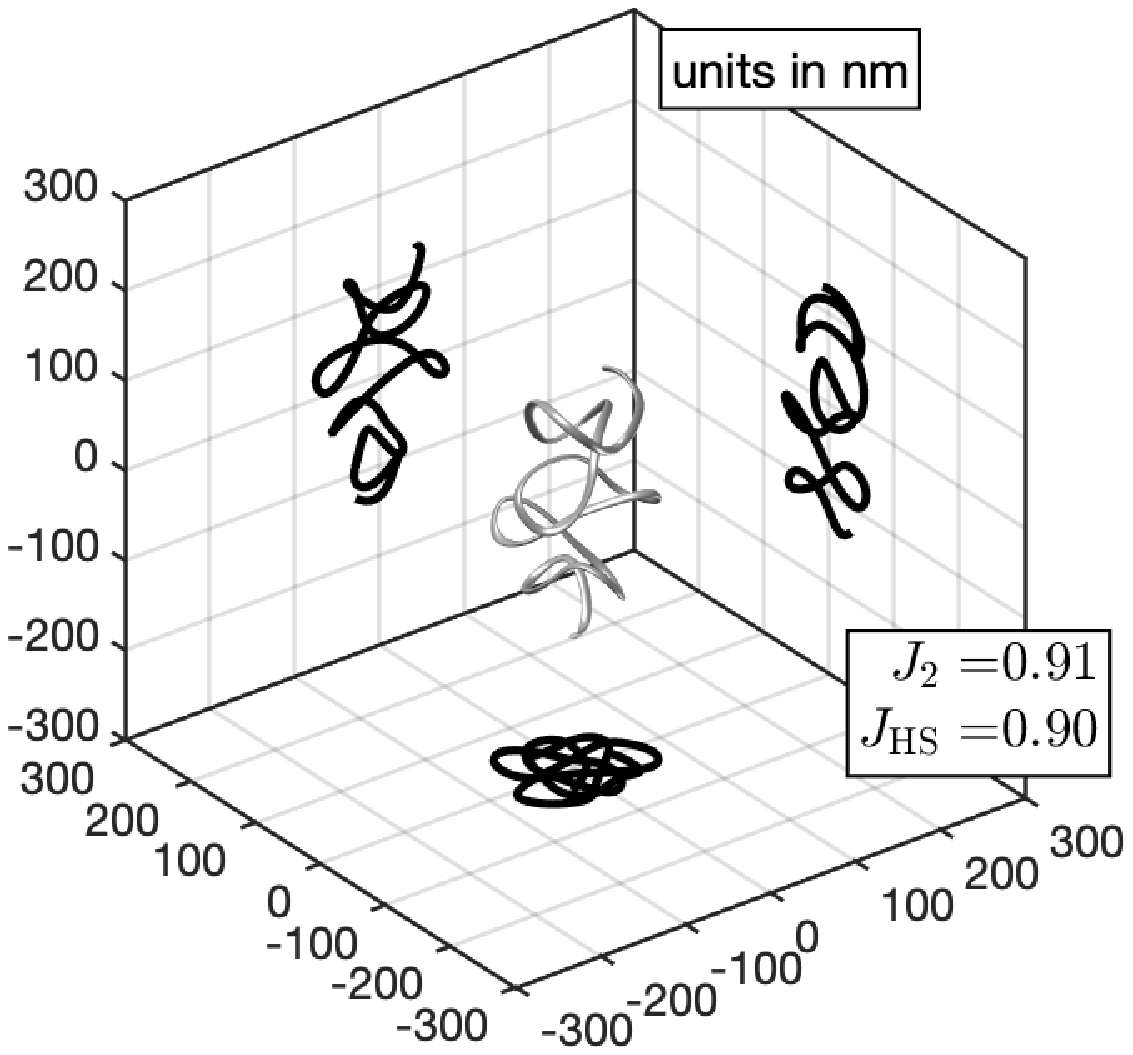} 
    \includegraphics[height=3.78cm]{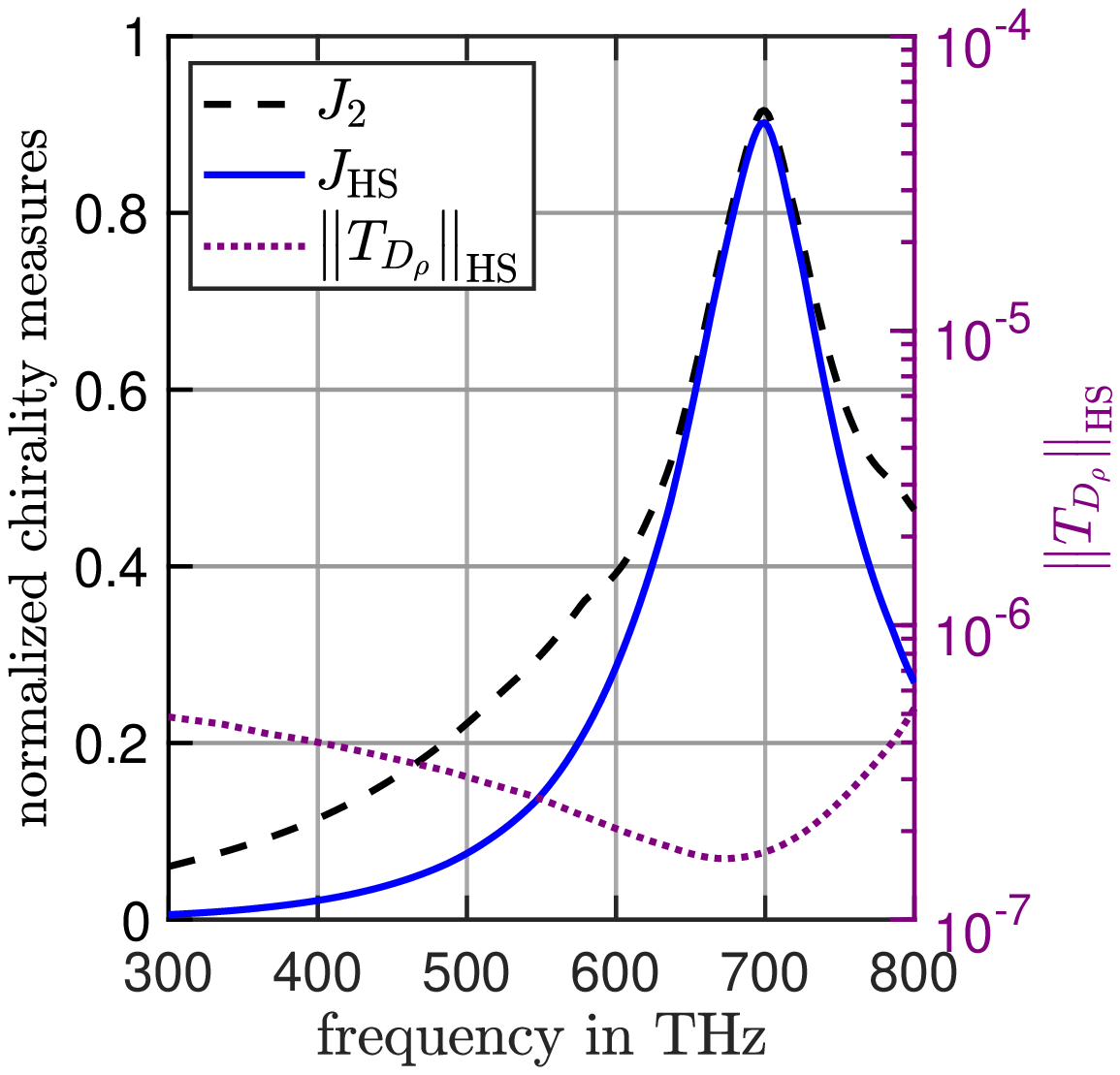} 
    \caption{Optimized silver nanowires and corresponding frequency
      scans from Example~\ref{exa:3} for $\fopt=400,500$~THz (top row) and
      for $\fopt=600,700$~THz (bottom row).}
    \label{fig:Exa3-1}
  \end{figure}

  In Figure~\ref{fig:Exa3-1} we show the optimized silver nanowires
  that have been obtained for $\fopt=400,500$~THz (top row) and for
  $\fopt=600,700$~THz (bottom row). 
  The shapes of the optimized nanowires look complicated but they show 
  similarities and seem to be scaled according to the wavelength,
  where the optimization has been carried out.
  Very high normalized em-chirality measures are being attained by the 
  optimized structures at all four frequencies $\fopt=400,500, 600$,
  and $700$~THz, respectively. 
  Figure~\ref{fig:Exa3-1} also shows plots of the normalized
  em-chirality measures $\Jtwo$ (dashed) and $\JHS$ (solid) as well as
  of the total interaction cross-section (dotted) of the optimized
  thin silver nanowires as a function of the frequency of the incident
  waves. 
  The maximal values of the normalized em-chirality measures appear at
  approximately the same frequency, where the total interaction
  cross-section of the nanowires has a local minimum.   
  Directly at the plasmonic resonance frequency the normalized
  em-chirality measures are smaller, but on the other hand the total
  interaction cross section of the nanowire is much larger.
  
  \begin{figure}[thp]
    \includegraphics[height=4.2cm]{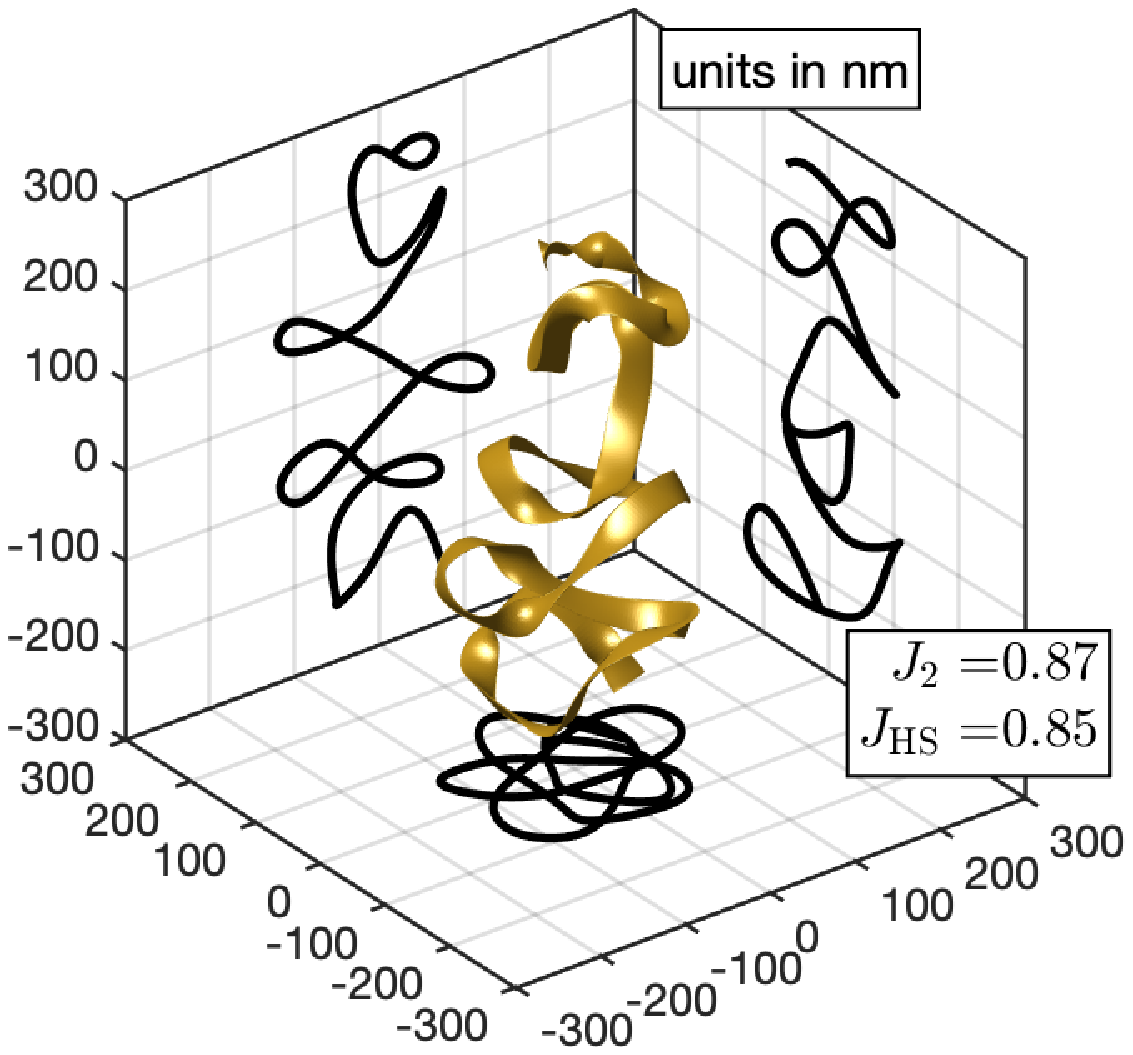}  
    \includegraphics[height=3.78cm]{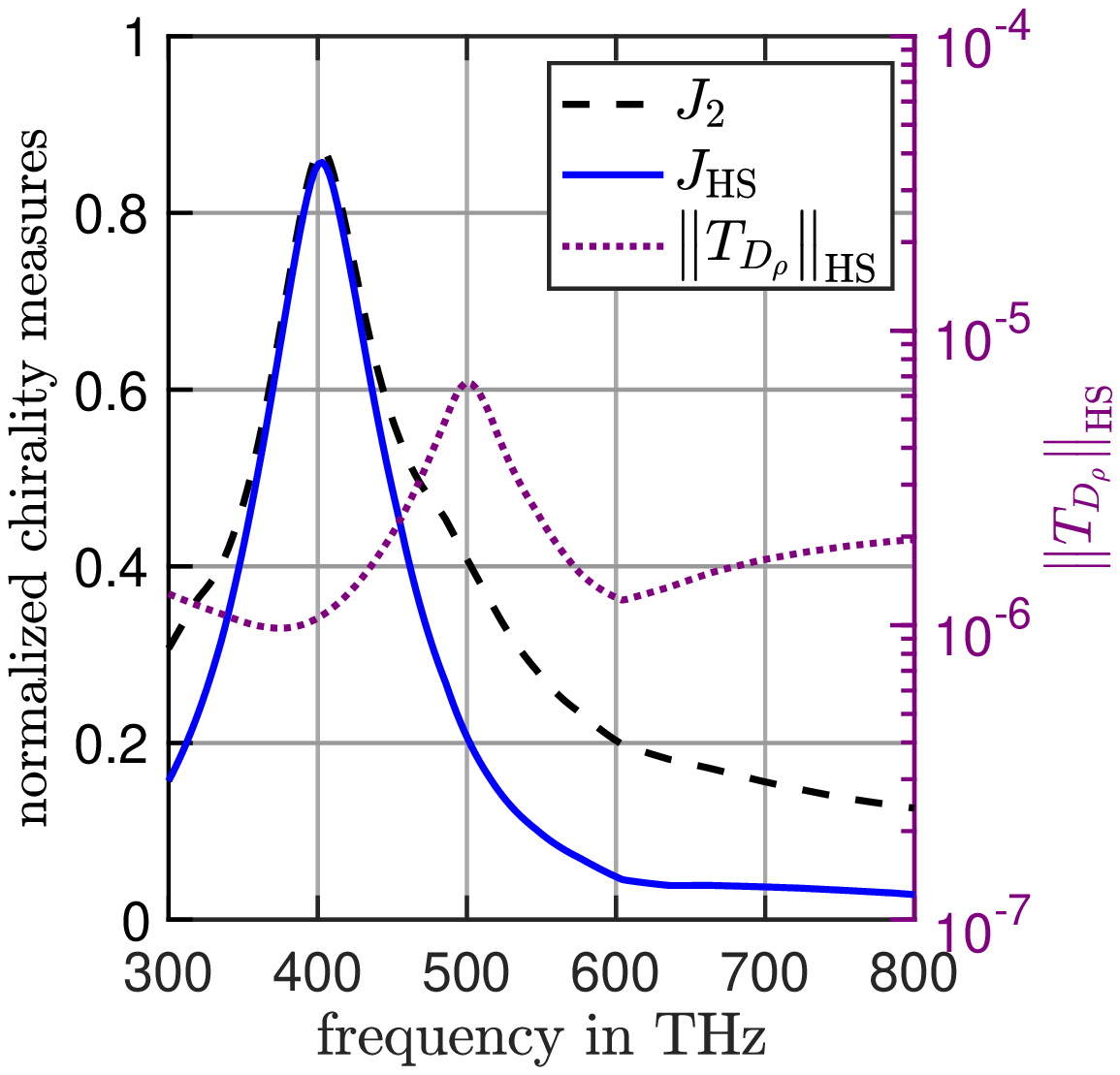}  
    \includegraphics[height=4.2cm]{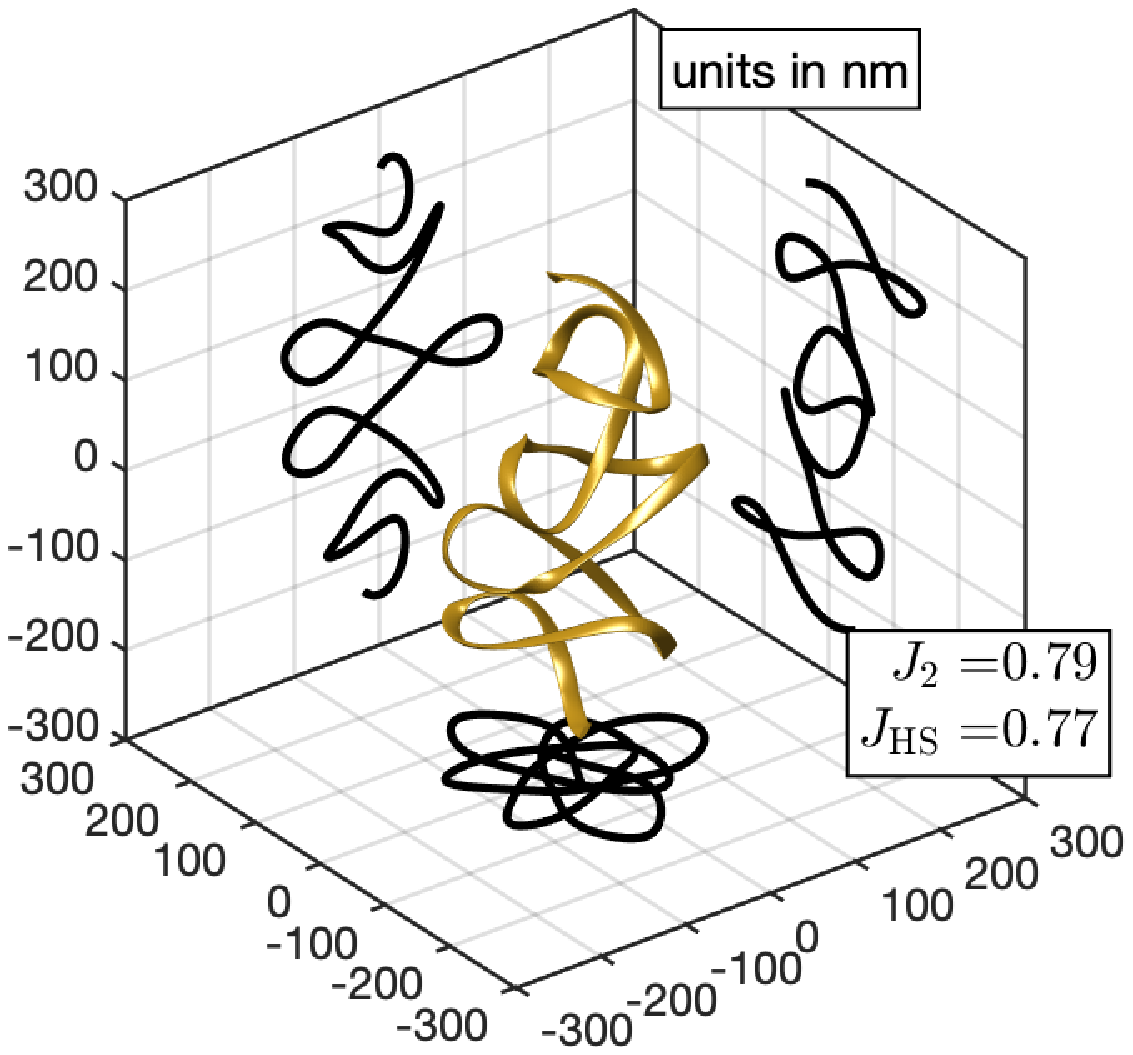}  
    \includegraphics[height=3.78cm]{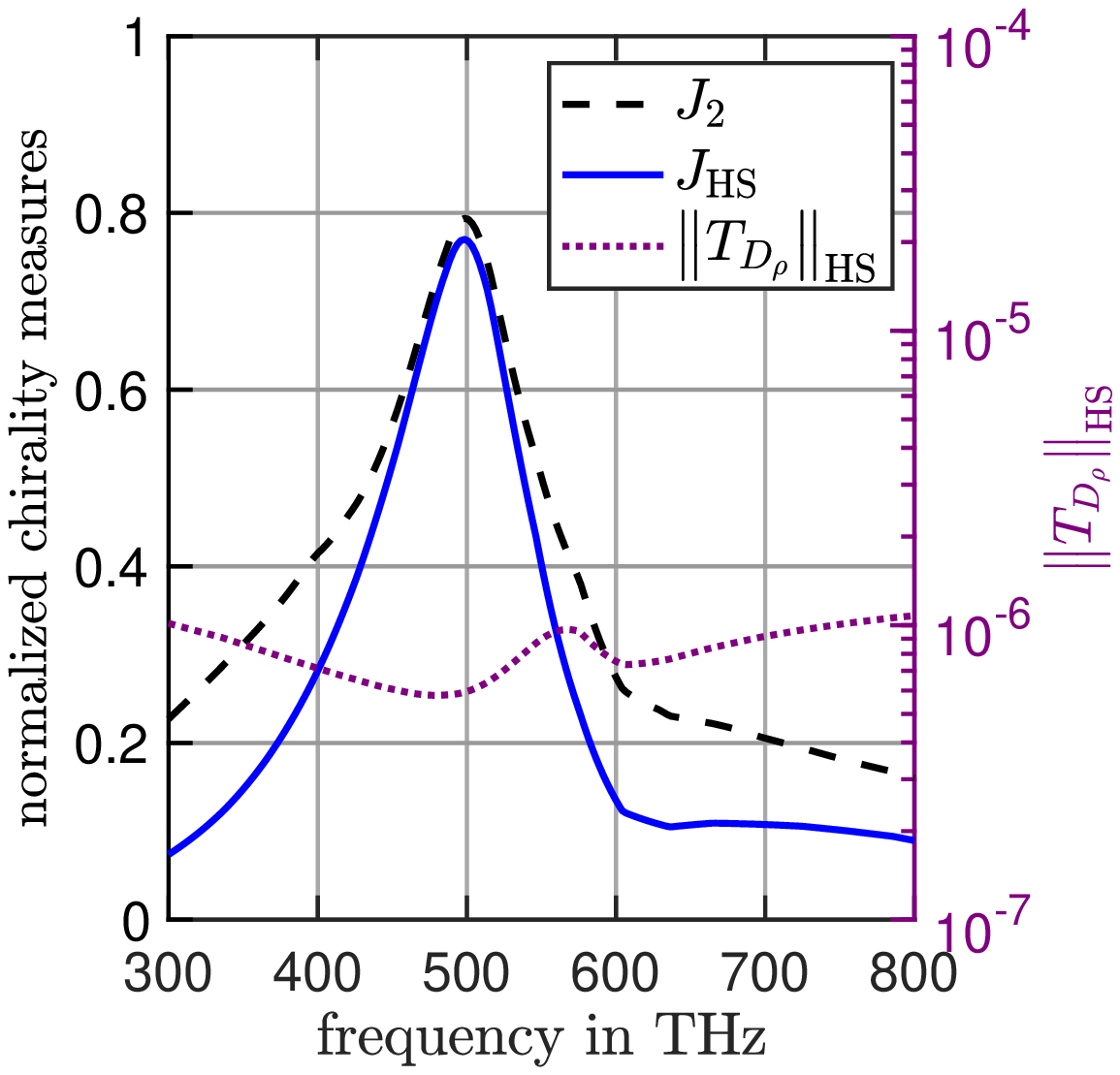} 
    \caption{Optimized gold nanowires and corresponding frequency
      scans from Example~\ref{exa:3} for $\fopt=400,500$~THz.}
    \label{fig:Exa3-2}
  \end{figure}

  In Figure~\ref{fig:Exa3-2} we show the corresponding results for
  gold nanowires at $\fopt=400$ and $500$~THz.
  The obtained normalized em-chirality measures are lower than for
  silver, which might be explained by the larger imaginary part of the
  relative electric permittivity of gold at these frequencies.
  Also the plasmonic resonances are not as pronounced as for the thin 
  silver nanowires. 
  As a consequence of the even larger imaginary part of the electric
  permittivity of gold at $\fopt=600$ and $700$~THz, the normalized
  em-chirality measures obtained for thin gold nanowires optimized at
  these frequencies are rather small.
  Therefore, we do not show the results for these
  frequencies.~\hfill$\lozenge$ 
\end{example}

%%%%%%%%%%%%%%%%%%%%%%%%%%%%%%%%%%%%%%%%%%%%%%%%%%%%%%%%%%%%%%%%%%%%%% 
\section*{Conclusions}
%%%%%%%%%%%%%%%%%%%%%%%%%%%%%%%%%%%%%%%%%%%%%%%%%%%%%%%%%%%%%%%%%%%%%%
We have optimized the shape of thin free-form silver and gold
nanowires to maximize their electromagnetic chirality at particular
frequencies in the optical band. 
Our gradient based shape optimization scheme uses an asymptotic
representation formula for scattered electromagnetic fields due to
thin bended and twisted metallic nanowires with arbitrary
cross-sections to efficiently evaluate the objective functionals and
their shape derivatives in each iteration. 
We have extended the theoretical foundation of this asymptotic
representation formula from dielectric materials to noble metals with
complex-valued electric permittivities with negative real and positive
imaginary parts.

We have demonstrated that the optimized free-form silver and gold
nanowires yield significantly enhanced chiral responses, when compared
to traditional helical designs. 
Values larger than $90$\% of the maximum possible value of
electromagnetic chirality are obtained for optimized silver nanowires
with elliptical cross-section across the whole optical band. 
We have observed an interesting connection between plasmonic
resonances (and nearby local minima of the total interaction
cross-section) and extremal values of electromagnetic chirality for
thin metallic nanowires at optical frequencies.  
These results are relevant in practice, since the optimized free-form
silver and gold nanowires may serve as building blocks of novel
metamaterials with an increased chiral response.
However, due to the small thickness of our optimized nanowires,
the fabrication of these metamaterials might be challenging.

%%%%%%%%%%%%%%%%%%%%%%%%%%%%%%%%%%%%%%%%%%%%%%%%%%%%%%%%%%%%%%%%%%%%%% 
\section*{Acknowledgments}
%%%%%%%%%%%%%%%%%%%%%%%%%%%%%%%%%%%%%%%%%%%%%%%%%%%%%%%%%%%%%%%%%%%%%% 
Funded by the Deutsche Forschungsgemeinschaft (DFG, German Research
Foundation) -- Pro\-ject-ID 258734477 -- SFB 1173.

%%%%%%%%%%%%%%%%%%%%%%%%%%%%%%%%%%%%%%%%%%%%%%%%%%%%%%%%%%%%%%%%%%%%%% 
% \section*{Appendix}
\appendix
\section{Polarization tensor bounds}
\renewcommand{\thesection}{\Alph{section}}
We show two pointwise bounds for the electric polarization
tensor~$\Meps$ that are similar to the bounds that have been
established in \cite[Thm.~1]{CapVog03} but valid 
for complex valued relative electric permittivities~$\epsr$ with
negative real and positive imaginary part. 
Since we will modify the arguments used in the proof of
\cite[Thm.~1]{CapVog03}, we work with the following definition of the 
electric polarization tensor $\Meps$, which is equivalent
to~\eqref{eq:CorrectorPotential}--\eqref{eq:DefPolTen} (see
\cite[Lmm.~1]{CapVog06}). 
For any $\bfxi\in\Sd$, we denote by
$V_\rho^{(\bfxi)}\in H^1_\diamond(\BR)$ the corrector potential
satisfying 
\begin{equation}
  \label{eq:CorrectorPotentialAlt}
  \div\bigl(\eps_\rho\nabla V_\rho^{(\bfxi)}\bigr) 
  \,=\, 0 \quad \text{in } \BR \,, \qquad
  \eps_\rho \frac{\di V_\rho^{(\bfxi)}}{\di\bfnu} 
  \,=\, \eps_\rho\, \bfnu\cdot\bfxi \quad \text{on } \di\BR \,,
\end{equation}
where $H^1_\diamond(\BR)$ denotes the space of $H^1$-functions on $\BR$
with vanishing integral mean on $\di\BR$. 
Then, the electric polarization tensor $\Meps\in L^2(\Gamma,\Cdd)$ is
uniquely determined by 
\begin{equation}
  \label{eq:DefPolTenAlt}
  \frac{1}{|\Gamma|} \int_\Gamma \bfxi \cdot \Meps \bfxi \, \psi \ds
  \,=\, \frac{1}{|D_\rho|} \int_{D_\rho} \bigl( \bfxi \cdot 
  \nabla V^{(\bfxi)}_\rho \bigr) \, \psi \dx 
  + o(1) \qquad \text{as } \rho\to0 
\end{equation}
for all $\psi \in C(\ol{\BR)})$ and any $\bfxi\in\Sd$.  

\begin{lemma}
  \label{lmm:PolTenBounds}
  Suppose that $\eps_r\in\C$ with $\real(\eps_r)<0$ and
  $\imag(\eps_r)>0$, and for any $0<\rho<\rGamma$ let $\Drho\tm\Rd$
  and $\eps_\rho$ be as in \eqref{eq:DefDrho} and \eqref{eq:epsrho},
  respectively.
  As before, we write $\eps_m := \eps_0\eps_r$ for the electric
  permittivity of the thin metallic nanowire. 
  \begin{itemize}
  \item[(a)] The electric polarization tensor
    $\Meps \in L^2(\Gamma,\Cdd)$ is symmetric, i.e., 
    \begin{equation}
      \label{eq:PoltenSymmetry}
      \Meps_{ij}(\bfx)
      \,=\, \Meps_{ji}(\bfx) \qquad  \text{for } 1\leq i,j\leq 3
      \text{ and a.e.\ } \bfx\in\Gamma \,.
    \end{equation}
  \item[(b)] We have that
    \begin{equation}
      \label{eq:InequImag}
      \frac{\bfxi \cdot  \imag  \bigl((\ol{\eps_m-\eps_0})
        \Meps(\bfx) \bigr) \bfxi}{\imag \left(\ol{\eps_m-\eps_0}\right)}
      \,\leq\, |\bfxi|^2 
      \qquad \text{for every } \bfxi\in\Sd
      \text{ and a.e.\ } \bfx\in\Gamma \,.
    \end{equation}
  \item[(c)] There exists $\gamma \in (0, \pi/2)$ such that
    $\real(\rme^{\rmi \gamma}\ol{\eps_0}) >0$,
    $\real(\rme^{\rmi \gamma}\ol{\eps_m}) >0 $, and
    $\real(\rme^{\rmi \gamma}(\ol{\eps_m-\eps_0}))<0$.
    For every~$\gamma$ with these properties, we have that  
    \begin{equation}
      \label{eq:InequReal}
      \frac{\bfxi \cdot \real\bigl( \rme^{\rmi \gamma}(\ol{\eps_m - \eps_0})
        \Meps(\bfx) \bigr) \bfxi}{\real \left( \rme^{\rmi \gamma}(
          \ol{\eps_m-\eps_0}) \right)}
      \,\geq\, |\bfxi|^2
      \qquad \text{for every } \bfxi\in\Sd
      \text{ and a.e.\ } \bfx\in\Gamma \,.
    \end{equation}
  \end{itemize}
\end{lemma}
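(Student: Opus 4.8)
The plan is to run all three parts through the variational characterization of the corrector potential, following the real-coefficient argument of \cite[Thm.~1]{CapVog03} and simply carrying the complex permittivity through the bilinear and energy identities. It is cleanest to work with the $H^1_0$-corrector $W_\rho^{(\bfxi)}$ of \eqref{eq:CorrectorPotential}, whose correction term reproduces the polarization tensor of \eqref{eq:DefPolTen} (equivalently \eqref{eq:DefPolTenAlt}, by \cite[Lmm.~1]{CapVog06}). Its weak formulation reads
\begin{equation*}
  \int_{\BR} \eps_\rho \nabla W_\rho^{(\bfxi)} \cdot \nabla\phi \dx
  \,=\, -(\eps_m-\eps_0) \int_{\Drho} \bfxi\cdot\nabla\phi \dx
  \qquad \text{for all } \phi\in H^1_0(\BR) \,,
\end{equation*}
where I use that $H^1_0(\BR)$ is invariant under complex conjugation, so the test function may be inserted without a conjugate.

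For part~(a) I would test this identity for direction $\bfxi$ against $\phi=W_\rho^{(\bfeta)}$ and the one for $\bfeta$ against $W_\rho^{(\bfxi)}$. The principal part $\int_{\BR}\eps_\rho\nabla W_\rho^{(\bfxi)}\cdot\nabla W_\rho^{(\bfeta)}\dx$ is symmetric in $\bfxi,\bfeta$, hence $\int_{\Drho}\bfxi\cdot\nabla W_\rho^{(\bfeta)}\dx=\int_{\Drho}\bfeta\cdot\nabla W_\rho^{(\bfxi)}\dx$ for every $\rho$. This is exactly the symmetry of the bilinear form underlying the correction term in \eqref{eq:DefPolTen} (together with $W_\rho^{(\bfxi+\bfeta)}=W_\rho^{(\bfxi)}+W_\rho^{(\bfeta)}$), so passing to the limit yields $\Meps_{ij}=\Meps_{ji}$ a.e., which is \eqref{eq:PoltenSymmetry}. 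No sign property of $\epsr$ enters, so this step transcribes from \cite{CapVog03} verbatim.

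The heart of parts (b) and (c) is a single complex energy identity. Testing the weak formulation against $\ol{W_\rho^{(\bfxi)}}$, taking complex conjugates, and using that $\eps_0\in\R$ gives
\begin{equation*}
  (\ol{\eps_m-\eps_0}) \int_{\Drho} \bfxi\cdot\nabla W_\rho^{(\bfxi)} \dx
  \,=\, -\eps_0 \int_{\BR\setminus\Drho} |\nabla W_\rho^{(\bfxi)}|^2 \dx
  - \ol{\eps_m} \int_{\Drho} |\nabla W_\rho^{(\bfxi)}|^2 \dx \,,
\end{equation*}
where both integrals on the right are real and nonnegative. Since $\eps_0$ is real, taking the imaginary part annihilates the background term and leaves $\imag(\ol{\eps_m})<0$ times a nonnegative quantity; inserting \eqref{eq:DefPolTen} and dividing by $\imag(\ol{\eps_m-\eps_0})=\imag(\ol{\eps_m})<0$ yields \eqref{eq:InequImag}. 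For \eqref{eq:InequReal} I first observe that the three sign conditions on $\gamma$ amount to $|\real(\epsr)|/\imag(\epsr) < \tan\gamma < (1+|\real(\epsr)|)/\imag(\epsr)$, an interval of positive length, so an admissible $\gamma\in(0,\pi/2)$ exists. Multiplying the identity by $\rme^{\rmi\gamma}$ and taking the real part, both weights $\real(\rme^{\rmi\gamma}\ol{\eps_0})>0$ and $\real(\rme^{\rmi\gamma}\ol{\eps_m})>0$ carry the favorable sign, so the right-hand side is nonpositive; dividing by $\real(\rme^{\rmi\gamma}(\ol{\eps_m-\eps_0}))<0$ gives \eqref{eq:InequReal}.

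I expect the main obstacle to be the rigorous localization, since the bounds are claimed pointwise a.e.\ on $\Gamma$ and the energy identity must therefore be carried with a nonnegative weight $\psi$, as in \eqref{eq:DefPolTenAlt}. Testing against $\psi\,\ol{W_\rho^{(\bfxi)}}$ produces additional terms involving $\nabla\psi$, and the crux is to show, using the $\rho$-uniform gradient estimates for the corrector from \cite{CapGriKno21,CapVog03}, that these are of lower order and absorbed into the $o(1)$ as $\rho\to0$. Because $\psi\ge0$ preserves the nonnegativity of the weighted energies, the sign arguments above survive the localization, and letting $\psi$ range over nonnegative test functions converts the integrated inequalities into the pointwise statements \eqref{eq:InequImag} and \eqref{eq:InequReal}.
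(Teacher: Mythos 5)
Your argument is correct and lands on the same core mechanism as the paper's proof: a complex energy identity for the corrector, taken once with an imaginary part and once with a real part after rotation by $\rme^{\rmi\gamma}$, with the sign conditions on $\gamma$ doing all the work. The genuine difference is the choice of corrector. You work with the Dirichlet corrector $W_\rho^{(\bfxi)}$ of \eqref{eq:CorrectorPotential}--\eqref{eq:DefPolTen}, whereas the paper deliberately switches to the equivalent Neumann formulation \eqref{eq:CorrectorPotentialAlt}--\eqref{eq:DefPolTenAlt} (via \cite[Lmm.~1]{CapVog06}) precisely so that the weighted identity \eqref{eq:ProofBounds1} and the symmetry computation can be cited verbatim from pp.~169--170 of \cite{CapVog03}; structurally the two identities are the same, with your $W_\rho^{(\bfxi)}$ playing the role of $V_\rho^{(\bfxi)}-V_0^{(\bfxi)}$. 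What your route costs you is exactly the step you flag: the localized identity with weight $\psi\ge 0$ has to be re-derived rather than cited, and the cross terms involving $\nabla\psi$ are not controlled by ``$\rho$-uniform gradient estimates'' alone --- the energy bound only gives $\|\nabla W_\rho^{(\bfxi)}\|_{L^2(\BR)}\le C|\Drho|^{1/2}$, which makes $\int_{\BR}\ol{W_\rho^{(\bfxi)}}\,\nabla W_\rho^{(\bfxi)}\cdot\nabla\psi\dx$ merely $O(|\Drho|)$; you additionally need the strong smallness $\|W_\rho^{(\bfxi)}\|_{L^2(\BR)}=o(|\Drho|^{1/2})$ (obtained from weak convergence of $W_\rho^{(\bfxi)}/|\Drho|^{1/2}$ to zero plus compact embedding), which is standard in the low-volume-fraction literature but should be stated. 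Note also that the coercivity needed for the corrector problem itself with $\real(\epsr)<0$ is exactly the rotated coercivity furnished by the $\gamma$ of part~(c). On the plus side, your existence argument for $\gamma$ via the explicit interval $|\real(\epsr)|/\imag(\epsr)<\tan\gamma<(1+|\real(\epsr)|)/\imag(\epsr)$ is a clean and arguably more transparent alternative to the paper's polar-form choice $\gamma=3\pi/2-(\alpha+\beta)/2$; both are valid. Your symmetry argument in~(a) and all sign checks in~(b) and~(c) are correct.
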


\begin{proof}
  (a)\, Let $\bfxi\in\Sd$.
  We denote by $V_\rho^{(\bfxi)}\in H^1_\diamond(\BR)$ the
  corresponding solution to \eqref{eq:CorrectorPotentialAlt}. 
  Moreover, we define $V_0^{(\bfxi)}\in H^1_\diamond(\BR)$ by
  $V_0^{(\bfxi)}(\bfx) := \bfx\cdot\bfxi$ for all $\bfx\in\BR$. 
  Then, $V_0^{(\bfxi)}$ solves \eqref{eq:CorrectorPotentialAlt} with
  $\eps_\rho$ replaced by~$\eps_0$.
  
  It can be seen as on p.~169 of \cite{CapVog03} that, for any two
  $\bfxi,\bfeta\in\Sd$ and for all $\psi\in C(\ol{\BR})$, 
  \begin{equation*}
%     \label{eq:}
    \frac1{|\Drho|} \int_{\Drho}
    (\eps_0-\eps_m) \nabla V_0^{(\bfxi)}
    \cdot \nabla V_\rho^{(\bfeta)} \,\psi \dx
    \,=\, \frac1{|\Drho|} \int_{\Drho}
    (\eps_0-\eps_m) \nabla V_0^{(\bfeta)}
    \cdot \nabla V_\rho^{(\bfxi)} \,\psi \dx
    + o(1) % \qquad \text{as } \rho\to 0 \,.
  \end{equation*}
  as $\rho\to 0$.
  Substituting this into \eqref{eq:DefPolTenAlt} gives
  \begin{equation*}
%     \label{eq:}
    \frac{1}{|\Gamma|} \int_\Gamma (\eps_0-\eps_m)
    \bfxi \cdot \Meps \bfeta \, \psi \ds
    \,=\, \frac{1}{|\Gamma|} \int_\Gamma (\eps_0-\eps_m)
    \bfeta \cdot \Meps \bfxi \, \psi \ds
    + o(1) \,,% \qquad \text{as } \rho\to 0 \,, 
  \end{equation*}
  which implies \eqref{eq:PoltenSymmetry}.
  
  (b)\, The same calculation as on p.~170 of \cite{CapVog03}
  shows that, for any $\bfxi\in\Sd$, 
  \begin{multline}
    \label{eq:ProofBounds1}
    \frac{1}{|\Gamma|} \int_\Gamma (\ol{\eps_m-\eps_0})
    \bfxi \cdot \Meps \bfxi \,\psi \ds
    \,=\, \frac1{|\Drho|} \int_{\Drho} (\ol{\eps_m-\eps_0})
    \nabla V_\rho^{(\bfxi)}
    \cdot \nabla V_0^{(\bfxi)} \,\psi \dx\\
    \,=\, \frac1{|\Drho|} \int_{\Drho} (\ol{\eps_m-\eps_0})
    |\nabla V_0^{(\bfxi)}|^2 \,\psi \dx
    % &\phantom{\,=\,}
    - \frac1{|\Drho|} \int_\BR \ol{\eps_\rho}
    |\nabla(V^{(\bfxi)}_0-V^{(\bfxi)}_\rho)|^2 \,\psi \dx
    + o(1) % \qquad \text{as } \rho\to 0 \,.
  \end{multline}
  as $\rho\to 0$.
  From now on we consider non-negative test functions $\psi\in
  C(\ol{\BR})$, i.e., $\psi \geq 0$. 
  Taking the imaginary part on both sides of \eqref{eq:ProofBounds1} and
  recalling that $\imag(\eps_\rho)\geq0$ gives
  \begin{equation}
    \label{eq:ProofBounds2}
    \frac{1}{|\Gamma|} \int_\Gamma \bfxi \cdot
    \imag \bigl( (\ol{\eps_m-\eps_0})
    \Meps \bigr) \bfxi \,\psi \ds
    \,\geq\, \frac1{|\Drho|} \int_{\Drho}
    \imag \bigl(\ol{\eps_m-\eps_0}\bigr)
    |\nabla V_0^{(\bfxi)}|^2 \,\psi \dx
    + o(1) 
  \end{equation}
  as $\rho\to 0$. 
  Passing to the limit on the right hand side of 
  \eqref{eq:ProofBounds2} then yields \eqref{eq:InequImag}.

  (c)\, Writing $\ol{\eps_m} = |\eps_m|\rme^{\rmi \alpha}$ and
  $\ol{\eps_m-\eps_0} = |\eps_m-\eps_0|\rme^{\rmi \beta}$ in polar form,
  our assumptions on $\eps_0$ and $\eps_m$ imply that
  $\alpha,\beta \in (\pi,3\pi/2)$ with $\alpha>\beta$.
  Accordingly, choosing $\gamma = 3\pi/2 - (\alpha+\beta)/2$, we obtain
  that $\gamma \in (0, \pi/2)$ with
  $\real(\rme^{\rmi \gamma}\ol{\eps_0}) >0$,
  $\real(\rme^{\rmi \gamma}\ol{\eps_m}) >0 $, and
  $\real(\rme^{\rmi \gamma}(\ol{\eps_m-\eps_0}))<0$.
  Multiplying both sides of \eqref{eq:ProofBounds1} with $\rme^{\rmi\gamma}$
  and taking the real part gives
  \begin{equation}
    \label{eq:ProofBounds3}
    \frac{1}{|\Gamma|} \int_\Gamma \bfxi \cdot
    \real\bigl( \rme^{\rmi\gamma} (\ol{\eps_m-\eps_0})
    \Meps \bigr) \bfxi \,\psi \ds
    \,\leq\, \frac1{|\Drho|} \int_{\Drho}
    \real\bigl( \rme^{\rmi\gamma}(\ol{\eps_m-\eps_0}) \bigr)
    |\nabla V_0^{(\bfxi)}|^2 \,\psi \dx
    + o(1) 
  \end{equation}
  as $\rho\to 0$. 
  Passing to the limit on the right hand side of
  \eqref{eq:ProofBounds3} then yields \eqref{eq:InequReal}.
\end{proof}
%%%%%%%%%%%%%%%%%%%%%%%%%%%%%%%%%%%%%%%%%%%%%%%%%%%%%%%%%%%%%%%%%%%%%% 

%%%%%%%%%%%%%%%%%%%%%%%%%%%%%%%%%%%%%%%%%%%%%%%%%%%%%%%%%%%%%%%%%%%%%% 
% \section*{References}
%%%%%%%%%%%%%%%%%%%%%%%%%%%%%%%%%%%%%%%%%%%%%%%%%%%%%%%%%%%%%%%%%%%%%% 
{\small
  \bibliographystyle{abbrv}
  \bibliography{metallic_chirality}
}
%%%%%%%%%%%%%%%%%%%%%%%%%%%%%%%%%%%%%%%%%%%%%%%%%%%%%%%%%%%%%%%%%%%%%% 

\end{document}